\definecolor{winered}{rgb}{0.7,0,0} 
\definecolor{lessblue}{rgb}{0,0,0.7} 
\definecolor{mypink1}{rgb}{0.858, 0.188, 0.478}
\numberwithin{equation}{section}
\theoremstyle{plain}
\newtheorem{Th}{Theorem}[section]
\newtheorem{Lemma}[Th]{Lemma}
\newtheorem{Cor}[Th]{Corollary}
\newtheorem{prop}[Th]{Proposition}
\newtheorem{definition}[Th]{Definition}
\theoremstyle{definition}
\newenvironment{remark}
  {\pushQED{\qed}\remarkx}
  {\popQED\endremarkx}
 \newcommand{\R}{\mathbb{R}}
\newcommand{\supp}{\operatorname{supp }}
\newcommand{\lecal}{\mathcal{L}\mathcal{E}}
\renewcommand{\norm}[1]{\left\lVert #1 \right\rVert}
\newcommand{\vertiii}[1]{{\left\vert\kern-0.25ex\left\vert\kern-0.25ex\left\vert #1 
    \right\vert\kern-0.25ex\right\vert\kern-0.25ex\right\vert}}
\newcommand{\inprod}[1]{\left\langle #1\right\rangle}
\title{Integrated Local Energy Decay for the Damped Wave Equation on Stationary Space-Times}
\author{Collin Kofroth}
\address{Department of Mathematics, University of North Carolina, Chapel Hill}
\email{ckofroth@asu.edu}
\thanks{\textit{Funding}. The author gratefully acknowledges the partial support afforded by the F. Ivy Carroll Summer Research Fellowship \newline\indent awarded by the UNC-CH Graduate School and partial funding from NSF grant DMS-2054910 (PI: Jason Metcalfe).}
\date{\today}
\begin{document}
\begin{abstract}
We prove integrated local energy decay for the damped wave equation on stationary, asymptotically flat space-times in $(1+3)$ dimensions. Local energy decay constitutes a powerful tool in the study of dispersive partial differential equations on such geometric backgrounds. By utilizing the geometric control condition to handle trapped trajectories, we are able to recover high frequency estimates without any loss. We may then apply known estimates from the work of Metcalfe, Sterbenz, and Tataru in the medium and low frequency regimes in order to establish local energy decay. This generalizes the integrated version of results established by Bouclet and Royer from the setting of asymptotically Euclidean manifolds to the full Lorentzian case. 
\end{abstract}
\maketitle
\tableofcontents
\section{Introduction}
\subsection{Background}
The goal of this paper is to establish local energy decay for the damped wave equation on asymptotically flat space-times with time-independent metrics (in the sense that $\partial_t$ is a Killing field) subject to the geometric control condition. The primary advance in this work is recovering the high frequency local energy estimate present in \cites{mst20} for waves on non-trapping space-times. Since the aforementioned work only utilized the non-trapping assumption at high frequencies, this establishes the key step in extending time-integrated versions of previously-known results for damped waves on product manifolds (see \cites{BR14}) to the full Lorentzian case. From the proven high frequency estimate, we may apply known results in \cites{mst20} to conclude local energy decay to complete this extension.

Local energy estimates are a collection of rich and well-studied quantities within the field of dispersive partial differential equations, originally introduced on Minkowski space in classical works such as \cites{Mora66,Mora68,Mora75}, \cites{MRS77}. A particularly important class of local energy estimates are the \textit{integrated local energy estimates}; if $u$ solves the homogeneous flat wave equation $$(\partial_t^2-\Delta) u=0, \qquad \Delta=\sum\limits_{j=1}^n\partial_{x_j}^2$$ in spatial dimension $n\geq 3$, then the integrated local energy estimate which we are interested in takes the form \begin{align}\label{LED flat}
\sup_{j\geq 0}\left(\norm{\langle x\rangle^{-1/2}\partial u}_{L^2_tL^2_x\big(\R_+\times\{\langle x\rangle\approx 2^j\}\big)}+\norm{\langle x\rangle^{-3/2}u}_{L^2_tL^2_x\big(\R_+\times\{\langle x\rangle\approx 2^j\}\big)}\right)\lesssim \norm{\partial u(0)}_{L^2},
\end{align} where $\partial=(\partial_t,\nabla)$ denotes the space-time gradient, and $\inprod{x}=(1+|x|^2)^{1/2}$ denotes the Japanese bracket of $x$. This estimate is known to hold in the flat setting through a positive commutator argument using the multiplier introduced in the appendix of \cites{SR05}. In such a case, we will say that \textit{(integrated) local energy decay} holds. This is a quantitative statement on \textit{dispersion}, and it heuristically expresses that the energy of the wave must decay quickly enough within compact spatial sets to be integrable in time. Estimates of this form have significant utility, as they have been used to prove other important measures of dispersion such as Strichartz estimates (see \cites{BT07,BT08}, \cites{JSS90,JSS91}, \cites{MMT08}, \cites{MMTT10},  \cites{MT09,MT12}, \cites{Tat08}, \cites{Toh12},  and the references therein) and pointwise decay estimates (see \cites{Hi22}, \cites{Looi21}, \cites{MTT12}, \cites{Morg20}, \cites{MW21}, \cites{Tat13},  and references in these works). Additionally, local energy estimates have applications to nonlinear wave equations where one can develop estimates on an appropriate linearization of the problem, viewing the nonlinearity as a perturbation. These techniques have been applied in many works; see e.g. \cites{BH10}, \cites{KSS02,KSS04}, \cites{MS06,MS07}, \cites{SW10}, and the citations contained in them. We will be focused on establishing local energy decay rather than demonstrating its utility via applications. 

In \cites{mst20}, the authors proved that local energy decay holds for a broad class of stationary wave operators if and only if
\begin{enumerate}
    \item \textit{The space-time is non-trapping}: There are no null bicharacteristic rays which stay within a compact set for all time. 
    \item \textit{The operator satisfies certain spectral assumptions}: Upon replacing time derivatives in the wave operator with a complex parameter, one requires that this family of operators have no eigenvalues in the lower half-plane nor real resonances/embedded eigenvalues (see \cites{mst20} for more precise definitions); equivalently, one requires analytic continuation of the inverse (\textit{resolvent}) of this family of operators to the entire lower half plane and continuous extension to the real line.
\end{enumerate} They also established results for \textit{almost} stationary operators, though that is not the context of the work presented here. While the authors employed a non-trapping hypothesis, their work did not require product structure on their space-times, which makes their work highly influential in our own.

Although the absence of trapping is known to be necessary for waves to experience local energy decay (see \cites{Ral69}, \cites{Sb15}), one can recover weak local energy decay estimates with a prescribed loss at high frequencies for certain types of trapping (see \cites{Burq98}, \cites{Chr08}, \cites{Ika82,Ika88}, \cites{MMTT10}, \cites{NZ09}, \cites{Toh12}, \cites{WZ11}, and the contained references). When the trapping is sufficiently weak/unstable, then this loss is nominal (in fact, logarithmic); this is the case for both the \textit{Schwarzschild} (\cites{MMTT10}) and \textit{Kerr} (\cites{Toh12}) space-times. Both space-times possess non-trivial trapped sets, which constitute regions where light remains for all time. Although one can extract weak local energy decay estimates, the trapping still generates an immutable barrier to full local energy decay. We will not be working in a scenario that generates loss, although we would be remiss if we did not briefly mention weak local energy decay and essential space-times that enjoy it.

The study of damped waves also possesses a deep history, especially on compact manifolds. The seminal work \cites{RT74} introduced the \textit{geometric control condition}, which required that all null bicharacteristic rays intersect the damping region, and they used it to show that the energy of solutions to damped hyperbolic equations on compact product manifolds enjoys exponential decay in time. The uniform exponential bound is equivalent to so-called \textit{strong stabilization}, whereby one can bound the energy at an arbitrary time by the initial energy multiplied by a monotone-decreasing, non-negative function tending to zero as $t\rightarrow\infty$. This established the sufficiency of geometric control for strong stabilization in such settings, while \cites{Ral69} demonstrated necessity (also, see \cites{Leb96}). The work \cites{BLR92} showed sufficiency for observability and control on compact manifolds with boundary where the observability/control region is contained within the boundary. While there is notably less literature in the non-compact setting, it was proven in \cites{BR14} that local energy decay holds for the damped wave equation on asymptotically \textit{Euclidean} space-times with time-independent metrics under the assumption of geometric control on trapped geodesics. The authors proved dissipative Mourre estimates to obtain uniform resolvent bounds in different frequency regimes in order to apply a limiting absorption argument. This approach is highly dependent on the metric coefficients being independent of time and the product structure (asymptotically Euclidean metrics contain no metric cross terms). This result was improved in \cites{R2018} to estimates in the (weighted) energy space.

In this article, we combine the approaches of \cites{BR14} and \cites{mst20} to establish high frequency local energy estimates for damped waves on stationary, asymptotically flat space-times and explain how such a result can be readily combined with the existing work in \cites{mst20} to prove local energy decay. We underscore that we are not requiring the product structure evident in \cites{BR14} nor \cites{R2018} but, instead, allow for the full Lorentzian formulation.  Non-product metrics possess non-trivial cross terms and are called \textit{non-static}, of which the Kerr metric constitutes an important example. We most closely keep to the framework present in \cites{mst20}, which does not assume product structure and has results for even more general asymptotically flat non-trapping space-times (such as non-stationary ones). We again stress their use of a non-trapping hypothesis, which we replace by imposing geometric control. Trapping is an intrinsically high-frequency phenomenon, so only their high frequency work is affected by the trapping. Hence, this is the portion of the argument that needs modification to ensure local energy decay, and this is where the influence of \cites{BR14} comes into play. Since the medium and low frequency analyses (as well as the procedure of combining the different frequency regime estimates into the full local energy decay estimate) do not depend on the non-trapping hypothesis nor use the damping themselves, the corresponding results in \cites{mst20} readily apply (i.e. our problem essentially becomes a special case here). We omit the details of such results in this work, but we will explain why they apply in our context. 

\subsection{Problem Setup and Main Results}\label{setup}
Let $(\R^{4},g)$ be a Lorentzian manifold with coordinates $(t,x)\in\R\times\R^3,$ where $g$ has signature $(-+++).$ We will consider \textit{damped wave operators} of the form $$P=\Box_g+iaD_t,\qquad \Box_g=D_\alpha g^{\alpha\beta}D_\beta,$$ where $a\in C_c^\infty(\R^3)$ is non-negative and positive on an open set, and $D_\alpha=\frac{1}{i}\partial_\alpha,\ \alpha=0,1,2,3$. Greek indices will generally range over such values, whereas Latin indices will run over the integers $1,2,$ and $3$. Notice that we are using the standard Einstein summation convention, which we will do throughout this work. We will also subject $g$ to an \textit{asymptotic flatness} condition. More precisely, we first define the norm $$\norm{h}_{AF}=\sum\limits_{|\alpha|\leq 2} \norm{\inprod{x}^{|\alpha|}\partial^\alpha h}_{\ell^1_j L^\infty([0,T]\times A_j)},$$
where  $A_j=\{\inprod{x}\approx 2^j\}$ for $j\geq 0$ denote inhomogeneous dyadic regions, and $\ell^1_j$ denotes the $\ell^1$ norm over the $j$ index. The notation $A\lesssim B$ means that $A\leq CB$ for some $C>0,$ and the notation $A\approx B$ means that $B\lesssim A\lesssim B$. In the definition of the $A_j$'s, we require that these implicit constants are compatible to cover $\R^3.$ 
This allows us to define the $AF$ topology. 
\begin{definition}
    We say that $P$ is \textit{asymptotically flat} if 
    $\norm{g-m}_{AF}<\infty$,
    where $m$ denotes the Minkowski metric, and  
$$\norm{\inprod{x}^{|\alpha|}\partial^\alpha g}_{\ell_j^1L^\infty([0,T]\times A_j)}\lesssim_{\alpha} 1$$ for all $\alpha\in\mathbb{N}^3$ with $|\alpha|\geq 3.$ 
\end{definition} The latter condition will be necessary for certain functions appearing in this work to be symbolic in the Kohn-Nirenberg sense. We remark that the dyadic summability assumptions on our metric are weaker than the long-range perturbation condition present in \cites{BR14} (which provides a symbolic-type decay estimate for derivatives of the metric in $x$ in terms of $\inprod{x}^{-\rho},$ with $\rho>0$ fixed). In \cites{BR14}, the damping is not assumed to be compactly-supported, but rather non-negative everywhere and subject to a similar symbolic estimate to the metric (with an additional power of decay). Since the damping is a \textit{helpful} term which will only be necessary within a compact spatial set (this is made explicit with the introduction of the parameter $R_0$ on the next page), it is unnecessarily beneficial for us to assume that is it non-negative everywhere.

We will primarily be interested in when $\partial_t$ is a Killing field for $g$, in which case we say that $P$ is \textit{stationary}.

\begin{definition}
We say that $P$ is stationary if $g$ is independent of $t$.
\end{definition}
Next, we introduce \begin{itemize}
    \item the parameters $R_0$ and $\textbf{c}$, which are such that \begin{align*}
       \norm{g-m}_{AF_{>R_0}}&\leq \textbf{c}\ll 1, 
        \end{align*} where the subscript denotes the restriction of the norm to $\{|x|>R_0\}.$ The parameter $\textbf{c}$ should be viewed as being fixed first, after which we find an $R_0$ for which the above holds. Without loss of generality, we will assume that $\supp a\subset \{|x|\leq R_0\}$ (as it is unnecessarily beneficial outside of this set).
            \item the sequence  $(c_j)_{j\geq \log_2 R_0}$ satisfying
        \begin{align*}
      \norm{g-m}_{AF(A_j)}\lesssim c_j,\qquad
        \sum\limits_j c_j\lesssim \textbf{c}, 
    \end{align*} where $\norm{\cdot}_{AF(A_j)}$ denotes the restriction of the $AF$ norm to the dyadic region $A_j.$ We may assume, without any loss of generality, that the sequence is slowly-varying, i.e. $$c_j/c_k\leq 2^{\delta|k-j|},\qquad\delta\ll 1.$$ This sequence will be utilized when working in spatial weights within dyadic regions.
    \end{itemize}
These parameters tell us that, outside of a large enough spatial ball, the operator $P$ is a uniformly small perturbation of the flat wave operator $\Box_m=\partial_t^2-\Delta$ (which we simply denote as $\Box$). The sequence $(c_j)$ provides a quantitative measure on the size of the $AF$ norm throughout each spatial dyadic region outside of this ball.

We will also assume throughout that the vector field $\partial_t$ is uniformly time-like, which essentially constitutes a choice of coordinates. This condition, coupled with the signature of the metric, ensures that $D_ig^{ij}D_j$ is uniformly elliptic, i.e. \begin{equation}\label{ellipticity}
    g^{ij}\xi_i\xi_j\approx |\xi|^2,\qquad \xi\neq 0,
\end{equation}
where $|\cdot|$ denotes the standard Euclidean norm. This follows from the positive-definiteness of the momentum-energy tensor when applied to time-like vector fields.
 
 Next, we define the local energy norms \begin{align*}\norm{u}_{LE}&=\sup_{j\geq 0}\norm{\langle x\rangle^{-1/2} u}_{L^2_tL^2_x\big(\R_+\times A_j\big)},\\
\norm{u}_{LE^1}&=\norm{\partial u}_{LE}+\norm{\langle x\rangle^{-1}u}_{LE}.
\end{align*} A predual-type norm to the $LE$ norm is the $LE^*$ norm, which is defined as $$ \norm{f}_{LE^*}=\sum\limits_{j=0}^\infty\norm{\langle x\rangle^{1/2} f}_{L^2_tL^2_x\big(\R_+\times A_j\big)}.$$ Here, $L^p_tL^q_x$ denotes the Bochner space $L^p(\R_+, L^q(\R^3)).$ In the particular case of $p,q=2,$ then this is a Hilbert space; we will use $\inprod{\cdot,\cdot}$ to denote the inner product on $L^2_tL^2_x$. Lastly, we define the sum-space norm $$\norm{f}_{LE^*+L^1_tL^2_x}=\inf_{f=f_1+ f_2}\left(\norm{f_1}_{LE^*}+ \norm{f_2}_{L^1_tL^2_x}\right).$$

If we wish for the time interval to be e.g. $[0,T]$ in the above norms, then we will use the notation $\norm{u}_{LE[0,T]},\norm{u}_{LE^1[0,T]},\norm{u}_{LE^*[0,T]}, \norm{u}_{LE^*+L^1_tL^2_x[0,T]}$ (although we will write $LE^*[0,T]+L^1_tL^2_x[0,T]$ when referring to this space outside of norm subscripts), etc. A subscript of $c$ on any of these spaces denotes compact spatial support.

There are two additional function spaces that will be utilized extensively in this work. The first is the class of Schwartz functions $\mathcal{S}(\R^4)$, which will be useful for approximation arguments. The second is a particular collection of functions which is often the natural class to study wave equations. 
\begin{definition}
Let $T>0$. We define the class $\mathcal{W}_T$ to be the space of all functions $u\in C^2([0,T]\times\R^3)$ for which there exists $R>0$ so that $u(t,x)=0$ for all $t\in [0,T]$ and $|x|>R$. That is, $$\mathcal{W}_T=\{u\in C^2([0,T]\times\R^3): (\exists R>0)(\forall |x|>R)(\forall t\in [0,T])\ \ u(t,x)=0\}.$$
\end{definition}
We are interested in Cauchy problems of the form
\[\left\{\begin{aligned}\label{cauchy prob}
Pu&=f\in LE^*[0,T]+L^1_t L^2_x [0,T],\\
u[0]&=(u(0),\partial_t u(0))\in \dot{H}^1\oplus L^2.
\end{aligned}\right.\]
\begin{remark} 
The decay conditions on $u\in\mathcal{W}_T$ are not as restrictive as they might initially appear. If the Cauchy data is compactly-supported, then the condition is free by finite speed of propagation. If it is not, then one can approximate the data (which generically lives in the energy space) by compactly-supported data. The regularity conditions on $u$ are also not restrictive, as one can perform density arguments to reduce to the case of increased regularity.
\end{remark}

Now, we state the pertinent local energy estimates for such problems.\begin{definition}
We say that \textit{(integrated) local energy decay} holds for an asymptotically flat wave operator if the following estimate holds for all $T>0$: \begin{equation}\label{LED}
\norm{u}_{LE^1[0,T]}+\norm{\partial u}_{L^\infty_t L^2_x[0,T]}\lesssim\norm{\partial u(0)}_{L^2}+\norm{Pu}_{LE^*+L^1_tL^2_x[0,T]}\end{equation} for all $u\in \mathcal{W}_T$ such that $u[0]\in \dot{H}^1\oplus L^2$, with the implicit constant being independent of $T$.
\end{definition} 
The notion of an asymptotically flat wave operator is more broad than an asymptotically flat damped wave operator. They need not feature a damping term, and they are allowed to possess general lower-order terms which are asymptotically flat in an appropriate sense (see Definition 1.1 in \cites{mst20} for a precise definition).

Note that, due to global energy conservation for the flat wave problem, the general definition of local energy decay that we have given here is consistent with the integrated local energy  estimate for the flat wave equation in (\ref{LED flat}) (in the inhomogeneous case, one applies H\"older's inequality to the forcing). This estimate is known to hold whenever $P$ is a small asymptotically flat perturbation of $\Box$ (see \cites{Ali06}, \cites{MS06, MS07},  \cites{MT12}). In \cites{mst20}, the authors considered large $AF$ perturbations and proved that, for stationary problems, the local energy decay estimate (\ref{LED}) is equivalent to assuming that the wave operator $P$ is non-trapping and has no negative eigenfunctions ($L^2$ eigenfuctions with corresponding eigenvalues in the lower half-plane) nor real resonant states (outgoing non-$L^2$ eigenfunctions with real ``eigenvalues,'' which are called resonances); see Definitions 2.2, 2.4, and 2.8 in \cites{mst20} for more precise statements. The non-trapping hypothesis only arose during their proof of a high frequency estimate (Theorem 2.11 in \cites{mst20}), which took the form \begin{align}\label{high freq LED}
\norm{u}_{LE^1[0,T]}+\norm{\partial u}_{L^\infty_t L^2_x[0,T]}\lesssim \norm{\partial u(0)}_{L^2}+\norm{\inprod{x}^{-2} u}_{LE[0,T]}+\norm{Pu}_{LE^*+L^1_tL^2_x[0,T]}.
\end{align}
The implicit constant in the above estimate is crucially independent of $T$. This estimate does not require $u$ to be truncated to large time frequencies, but this is the context in which it is used in proving local energy decay.

The added spatial weight in the error term does not play a particular role in making this high frequency. Rather, it is the weight that naturally arises when performing a bootstrapping argument in the proof of the estimate; it is largely unimportant since this estimate can be reduced to studying solutions with compact spatial support (see Section \ref{reductions section}).

\begin{remark}\label{high freq rem} To see this as an estimate on the high frequencies, let $u\in\mathcal{S}(\R^4)$ be frequency-supported in time for $\tau$ in the range $1\ll \tau_1\leq |\tau|<\infty$ ((\ref{high freq LED}) is only applied for such $u$ in the proof of local energy decay). Then, we can use Plancherel's theorem in $t$ to obtain that 
\begin{align} \label{highfreqtrunc}
    \norm{\inprod{x}^{-2}u}_{LE_{t,x}}
    &\approx\norm{\inprod{x}^{-2}\hat{u}(\tau,x)}_{LE_{\tau,x}}\lesssim \frac{1}{\tau_1}\norm{\inprod{x}^{-2}\tau\hat{u}(\tau,x)}_{LE_{\tau,x}}
   \lesssim \frac{1}{\tau_1} \norm{u}_{LE^1_{t,x}}. 
\end{align} For large enough $\tau_1,$ this term can be absorbed into the left-hand side of (\ref{high freq LED}), providing local energy decay for solutions restricted to high frequencies. 

In fact, we may apply the high frequency estimate (\ref{high freq LED}) to $u(t-T/2)$ to get (after dropping the uniform energy piece) that
$$\norm{u}_{LE^1[-T/2,T/2]}\lesssim\norm{\partial u(-T/2)}_{L^2}+\norm{\inprod{x}^{-2}u}_{LE[-T/2,T/2]}+\norm{Pu}_{LE^*[-T/2,T/2]}.$$ Since the implicit constant is independent of $T$, we may take the limit as $T\rightarrow\infty$ and apply the prior work in (\ref{highfreqtrunc}) to obtain that 
$$\norm{u}_{LE^1}\lesssim\norm{Pu}_{LE^*}.$$ This is the context that we will apply the estimate to establish local energy decay.
\end{remark}

Our first main theorem is the following, which states that we recover the high frequency estimate (\ref{high freq LED}) of \cites{mst20} when working with damped waves and replacing the non-trapping hypothesis with the geometric control condition.
\begin{Th}\label{high freq est}  
Let $P$ be a stationary, asymptotically flat damped wave operator satisfying the geometric control condition, and suppose that $\partial_t$ is uniformly time-like. Then, the high frequency local energy estimate (\ref{high freq LED}) holds for all for all $u\in \mathcal{W}_T$ such that $u[0]\in\dot{H}^1\oplus L^2$. The implicit constant is independent of $T$.
\end{Th}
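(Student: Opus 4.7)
The plan is to combine the Morawetz-type positive commutator machinery underlying \cites{mst20} with a microlocal argument exploiting the damping via geometric control, in the spirit of \cites{BR14} but adapted to the Lorentzian setting without product structure. After invoking the reductions developed in Section \ref{reductions section}, I may assume $u\in\mathcal{W}_T$ is smooth with spatial support in a fixed ball. The uniform-in-$T$ energy bound $\norm{\partial u}_{L^\infty_tL^2_x[0,T]}\lesssim\norm{\partial u(0)}_{L^2}+\norm{Pu}_{L^1_tL^2_x[0,T]}$ follows from the standard damped-wave energy identity obtained by pairing $Pu$ with $\partial_t u$; since $a\geq 0$, the damping contributes non-negatively and can be dropped, while the $LE^*$ component of the forcing is absorbed via Cauchy--Schwarz against $\norm{u}_{LE^1}$ on the right-hand side.

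For the $LE^1$ piece, I would split $\R^3$ via a smooth partition of unity into a far-field region $\{|x|>4R_0\}$ and a compact near-field region $\{|x|\leq 4R_0\}$. On $\{|x|>4R_0\}$, the damping vanishes and $\norm{g-m}_{AF}$ is uniformly small (controlled by $\textbf{c}$), so the positive commutator argument with the multiplier introduced in \cites{SR05} --- exactly the flat-like argument used in \cites{mst20} outside the trapped set --- yields
\begin{align*}
\norm{u}_{LE^1(|x|>4R_0)}\lesssim \norm{\partial u(0)}_{L^2}+\norm{Pu}_{LE^*+L^1_tL^2_x}+\norm{u}_{LE^1(R_0<|x|\leq 4R_0)}+\norm{\inprod{x}^{-2}u}_{LE}.
\end{align*}
The annular commutator error is then absorbed into the near-field estimate. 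The remaining task is to bound $\norm{u}_{LE^1(|x|\leq 4R_0)}$ by $\norm{\partial u(0)}_{L^2}+\norm{Pu}_{LE^*+L^1_tL^2_x}+\norm{\inprod{x}^{-2}u}_{LE}$, which, because the region is compact, reduces to an $H^1$-type control on a fixed spatial ball across $[0,T]$ --- this is where geometric control must do its work.

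Since $P$ is stationary, Fourier transformation in $t$ converts the compact-region high-frequency estimate to a family of semiclassical resolvent bounds for the operator $P(\tau)$ obtained by formally replacing $D_t$ with $\tau$ in $P$. I would proceed by a contradiction/defect-measure argument: assume uniformity fails, extract approximate solutions $v_n$ with $\norm{v_n}_{L^2}=1$, $\tau_n\to\infty$, and $\norm{P(\tau_n)v_n}\to 0$ in the dual local energy norm, then pass to a semiclassical measure $\mu$ in phase space at scale $h_n=1/\tau_n$. Standard arguments show that $\mu$ is supported on the null characteristic set $\{g^{\alpha\beta}\xi_\alpha\xi_\beta=0\}$, is invariant under the null bicharacteristic flow, and --- because $\operatorname{Im}P(\tau)=a\tau$ provides dissipation --- vanishes on $\supp a$. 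Geometric control (every null bicharacteristic meets $\supp a$ in finite time) combined with flow invariance then forces $\mu\equiv 0$, contradicting $\norm{v_n}_{L^2}=1$.

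The main obstacle is upgrading this classical propagation/absorption picture to a quantitative, $T$-independent operator estimate compatible with the $LE^*+L^1_tL^2_x$ norm. Unlike \cites{BR14}, there is no product structure: $g$ generically has cross terms $g^{0i}\neq 0$ and the null bicharacteristic flow is not a Riemannian geodesic flow, so the Mourre/escape-function construction must be done on the full cotangent bundle over space-time, invoking stationarity only to ensure that the symbol class is $t$-independent. Concretely, one needs a pseudodifferential multiplier $Q$, symbolic in the Kohn--Nirenberg sense consistent with the $AF$ decay of $g$, such that $i[P,Q]+\{aD_t,Q\}$ is positive modulo lower order on the characteristic cone, using the geometric-control-driven lower bound
\[\int_0^{T_0} a\!\left(\pi_x\varphi_s(x,\xi)\right)\,ds \geq c>0\]
uniform over the trapped set, where $\varphi_s$ is the null bicharacteristic flow and $\pi_x$ the spatial projection. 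Assembling such a $Q$, deriving the resulting positive commutator identity, and gluing it to the far-field Morawetz estimate via the partition of unity --- absorbing all residual terms into the allowed $\norm{\inprod{x}^{-2}u}_{LE}$ error --- is the technical heart of the proof and the one place where the product-space techniques of \cites{BR14} genuinely have to be rebuilt.
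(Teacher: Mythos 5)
Your primary near-field plan --- Fourier transforming in $t$ and running a contradiction argument via semiclassical defect measures --- is a genuinely different route from the paper's, and you correctly diagnose its central weakness yourself: a compactness argument gives an existence-of-constant statement at a fixed semiclassical scale, and it is not at all direct to upgrade that to a quantitative, $T$-independent bound in $LE^1$ and $LE^*+L^1_tL^2_x$ with the $\ell^1$-dyadic structure of $LE^*$. There are two further issues you should flag. You state geometric control as ``every null bicharacteristic meets $\supp a$,'' but the hypothesis (\ref{GCC}) applies only to \emph{trapped} rays; on non-trapped rays the defect measure must instead be killed by an outgoing/radiation condition near infinity, which is a separate argument and not a consequence of the damping. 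And in the non-product setting, $P(\tau)=D_ig^{ij}D_j+2\tau g^{0j}D_j-\tau^2+ia\tau$ is a quadratic pencil in $\tau$, not a fixed self-adjoint operator minus $\tau^2$, so the dissipative Mourre/resolvent machinery of \cites{BR14} does not port over cleanly --- avoiding that is exactly the point of the microlocal reformulation.

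Your closing paragraph is the paper's proof in embryo, but you leave it as a wish list. The missing content is precisely the construction of the escape function, and it is the technical heart of the argument. The paper's Lemma \ref{escape} produces $q=\tau q_0+q_1$ with $q_j\in S^j(T^*\R^3)$ supported in $|\xi|\geq\lambda$ and a correction $m\in S^0$ so that $H_pq-2is_{skew}q+pm\gtrsim\mathbbm{1}_{|\xi|\geq\lambda}\inprod{x}^{-2}(\tau^2+|\xi|^2)$. The construction factors $p=-(\tau-b^+)(\tau-b^-)$ (Propositions \ref{b pm sign}, \ref{bichar equiv}) so that the $(t,\tau)$ and $(x,\xi)$ dynamics decouple --- this is what tames the $g^{0j}$ cross terms, which you correctly identify as the obstruction to naively importing \cites{BR14}. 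It then assembles $q$ from three pieces: a flow-integral escape function on the interior semi-bounded set using geometric control (Lemma \ref{semibdd}), a Doi/Bouclet--Royer-type construction on the non-trapped interior (Lemma \ref{non-trap}), and an exterior Morawetz multiplier for bootstrapping the cutoff error; these are rendered Kohn--Nirenberg symbolic by composing with the scaling $\Phi^\pm$, exponentiating, and inserting a high-frequency cutoff $\chi_{>\lambda}(|b^\pm|)$. A crucial preliminary step you omit is rescaling $u\mapsto\gamma^{-2}u(\gamma\cdot,\gamma\cdot)$ to place a large $\gamma$ in front of the damping term, which is what lets $\gamma a$ absorb the poorly-signed part of $H_{p^\pm}q^\pm$ in Lemma \ref{semibdd}. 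Positivity off the characteristic cone comes from a lower-order correction $m$ chosen by minimizing a quadratic in $\tau$. The theorem is then proved not by contradiction but by a direct positive commutator argument with the Weyl quantization of $q$ and $m$, sharp G\r{a}rding, and a time-frequency decomposition $v=v_{>>\lambda}+v_{<\lambda}$ with Plancherel-based absorption --- this is exactly where the allowed $\norm{\inprod{x}^{-2}u}_{LE}$ error arises. Your reductions and far-field Morawetz step do match the paper, but the rest of your first plan would not close, and your second plan needs the explicit escape function construction to be more than a list of desiderata.
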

 The geometric control condition, initially introduced in \cites{RT74} for dissipative hyperbolic equations on compact product manifolds, requires that every trapped null bicharacteristic ray intersects the damping region. We will make this more precise in Section \ref{dynamical frame}.
  \begin{remark}The implicit constant in the bound depends on $R_0$. In fact, much of our work will implicitly depend on $R_0$ due to our applications of asymptotic flatness. It is essential to note that this parameter is fixed second (with $\textbf{c}$ being fixed first), after which our other parameters (such as the scaling parameter $\gamma$ and the high-frequency parameter $\lambda$ which will both be introduced in Chapter 3) will be chosen (and hence depend on it). We will not track the dependence on $R_0$ within our implicit constants any longer. Our constants throughout will \textit{not} depend on $T$, however.
  \end{remark}
  Our second main theorem is local energy decay.
\begin{Th}\label{LED thm}
Let $P$ be a stationary, asymptotically flat damped wave operator satisfying the geometric control condition, and suppose that $\partial_t$ is uniformly time-like. Then, local energy decay
holds, with the implicit constant in (\ref{LED}) independent of $T$.
\end{Th}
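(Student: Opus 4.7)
The plan is to slot Theorem \ref{high freq est} into the stationary local-energy framework of \cites{mst20}, which decomposes (\ref{LED}) by time frequency into high, medium, and low frequency pieces. By Remark \ref{high freq rem}, Theorem \ref{high freq est} already upgrades to the clean bound $\norm{u}_{LE^1}\lesssim\norm{Pu}_{LE^*}$ for solutions time-frequency-localized above a large threshold $\tau_1$. The medium and low frequency portions of the \cites{mst20} argument invoke neither the damping nor the non-trapping hypothesis, so once the spectral assumptions on the stationary family $P(\tau)$ (formed by replacing $D_t$ by $\tau$) are verified---namely, no $L^2$ eigenfunctions with $\imag\tau\le 0$ and no real resonances---their proof transfers verbatim.

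I would check these spectral hypotheses case by case. For $\imag\tau<0$, pairing $P(\tau)v=0$ with $v$, integrating by parts, and taking imaginary parts yields a term with the sign of $\imag\tau\,\norm{v}_{L^2}^2$ together with $\int a|v|^2$; these combine coercively to force $v\equiv 0$. For real $\tau\ne 0$ and $v$ outgoing with $P(\tau)v=0$, the same pairing (after a cutoff-and-limit argument controlled by the outgoing condition at infinity) kills the damping contribution $\int a|v|^2$, so $v$ vanishes on the open set $\{a>0\}$; propagation of singularities along the bicharacteristics of $\Box_g$, together with the geometric control condition, then spreads this vanishing across the entire characteristic set at frequency $\tau$, and unique continuation from an open set closes the argument. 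At $\tau=0$ the damping contribution drops out and $P(0)$ coincides with the undamped stationary operator, whose zero-resolvent construction in \cites{mst20} is unaffected by the compactly supported $a$.

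With these spectral inputs in place, the low, medium, and high frequency estimates assemble through a Littlewood--Paley partition in $\tau$, and the uniform energy bound $\norm{\partial u}_{L^\infty_t L^2_x}$ follows from the standard damped-wave energy identity (where $a\geq 0$ only helps). The principal obstacle is the verification of the absence of real resonances at nonzero frequency: although this is morally the stationary dual of Theorem \ref{high freq est} and of the classical observability arguments of \cites{RT74,BLR92}, the microlocal deployment of the geometric control condition, the outgoing radiation condition at spatial infinity, and the asymptotically flat non-product geometry must all be reconciled carefully. Everything outside this step reduces to bookkeeping within the \cites{mst20} machinery.
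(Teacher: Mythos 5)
Your overall scaffolding---assemble high, medium, and low time-frequency estimates, combine by a partition of unity, then restore the Cauchy data---matches the paper, but you take a genuine detour in how you feed the geometric control condition into the \cites{mst20} machinery, and the detour rests on a mischaracterization of that machinery. You propose to verify the spectral hypotheses of the \cites{mst20} equivalence theorem (no eigenvalues with $\imag\tau<0$, no real resonances, no zero resonance) and then run their proof ``verbatim,'' identifying the absence of nonzero real resonances as the ``principal obstacle.'' The paper does not do this and does not need to. The medium frequency estimate (Theorem \ref{med freq est}, i.e. \cites{mst20} Theorem 5.4) is an unconditional Carleman bound that requires neither damping nor any spectral hypothesis. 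The low frequency estimate (\cites{mst20} Theorem 6.1) requires only the zero non-resonance condition, which is automatic here because the damping term $iaD_t$ contributes nothing at $\tau=0$, leaving the uniformly elliptic $P_0=D_ig^{ij}D_j$. The high frequency estimate is exactly Theorem \ref{high freq est}. These three assemble directly into Theorem \ref{uncond LED} by the time-frequency partition of unity; the nonzero-real-resonance hypothesis is never invoked in that argument. The remark in the paper does observe that the damping rules out nonzero real resonances, but this is an explanatory aside about why no extra hypothesis is needed, not a step in the proof.

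The piece you identify as the crux---proving absence of real resonances via propagation of singularities and unique continuation---is therefore both unnecessary and, as sketched, incomplete: propagation of singularities spreads regularity along bicharacteristics, not pointwise vanishing, so going from $v\equiv 0$ on $\{a>0\}$ to $v\equiv 0$ on the full characteristic set needs more than geometric control; and reconciling the outgoing radiation condition at infinity with the trapped/non-trapped dichotomy is essentially the content of the escape-function construction you would be bypassing. Separately, you compress the passage from Theorem \ref{uncond LED} (Schwartz $u$, no data) to the full estimate (\ref{LED}) to ``the standard damped-wave energy identity.'' In the paper this extension is the matching-function construction of \cites{mst20} Section 7, split between an interior region (handled by the uniform energy bound of Corollary \ref{uniform ee}) and an exterior region handled by a time-reversal argument. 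Time reversal turns damping into a driving force, so this step is only safe because $\supp a\subset\{|x|\le R_0\}$ makes the exterior operator undamped---a load-bearing use of the compact support of $a$ that your sketch omits.
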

\begin{remark}
    Unlike \cites{mst20}, we do not require any spectral hypotheses. The damping eliminates the possibility of non-zero real resonances, and a zero resonance cannot occur since $ P|_{D_t=0}$ is elliptic, which is independent of the damping. The latter is discussed further in Section \ref{low freq}.
\end{remark}
This follows rather directly from our high frequency estimate and the existing work in \cites{mst20}. We will cite the necessary result from \cites{mst20} and explain why they apply here.

The structure of the paper is as follows. In Section \ref{dynamical frame}, we will introduce the Hamiltonian formalism required to define trapping and geometric control, then we will state a key lemma (Lemma \ref{escape}) for the proof of Theorem \ref{high freq est}; namely, we construct an appropriate escape function and lower-order correction to allow for a positive commutator argument proof of the theorem. In Section \ref{flow results section}, we establish various results on the bicharacteristic flow that are vital for proving Lemma \ref{escape}, which we establish in Section \ref{escape section}. In Section \ref{reductions section}, we will establish and cite supplemental energy estimates and provide multiple case reductions to simplify the proof of Theorem \ref{high freq est}. In Section \ref{thm section}, we will prove Theorem \ref{high freq est}. Finally, Section \ref{LED section} will present the applicable theorems from \cites{mst20} which are required to establish local energy decay for our problem and discuss why they apply in our setting. Sections \ref{med freq} and \ref{low freq} introduce the relevant medium and low frequency estimates, respectively, and Section \ref{ext arg} provides a discussion on the proof of Theorem \ref{LED thm}.
\subsection{Cutoff Notation}
For the remainder of paper, we will fix the cutoffs
$$\chi\in C_c^\infty\text{ non-increasing},\ \chi\equiv 1\text{ for } |x|\leq 1,\ \chi\equiv 0\text{ for } |x|> 2,$$  $$\chi_{< R}(|x|)=\chi\left(\frac{|x|}{R}\right),\qquad \chi_{>R}=1-\chi_{< R},$$ and $$\chi_R\in  C_c^\infty,\ 0\leq \chi_R\leq 1,\ \supp\chi_R\subset\{|x|\approx R\}.$$ We will choose $\chi$ such that its square root is smooth (otherwise, we replace $\chi$ with $\chi^2$; we only use $\chi$ for notational convenience). When working in frequency variables, we will often add the variable into the subscript to make the dependence clear (e.g. $\chi_{|\xi|>\lambda}$). We will also occasionally write $r=|x|.$
\subsection{Acknowledgements}
This work was performed as a portion of the Ph.D. dissertation of the author at the University of North Carolina at Chapel Hill (see \cite{Kof22}). The author would like to thank his advisor, Jason Metcalfe, for suggesting this problem, engaging in countless helpful conversations, and carefully reading multiple drafts of this work. He would also like to thank Yaiza Canzani, Hans Christianson, Jeremy Marzuola, and both referees of this article for providing many beneficial comments and references.

\section{High Frequency Analysis}
\subsection{Introduction}
In this section, we will establish Theorem \ref{high freq est}. The notions of trapping and geometric control are intrinsically dynamical, so we will provide a thorough discussion of the relevant theory. Namely, we must introduce the bicharacteristic flow generated by the principal symbol of the damped wave operator and the properties that it satisfies. From here, we will construct an escape function and correction term in order to apply a utilize commutator argument and prove the theorem. The constructed symbols will satisfy an appropriate positivity bound, which will allow us to apply the sharp G\r{a}rding inequality upon swapping to the framework of pseudodifferential operators. The symbols will also be supported in an unbounded range of frequencies $[\lambda,\infty$) with $\lambda\gg 1$. This will be fundamentally important for bootstrapping error terms resulting from employment of pseudodifferential calculus.
\subsection{Dynamical Framework}\label{dynamical frame}
 In order to state the geometric control condition more precisely, we must first outline our dynamical framework, which is rooted in the Hamiltonian dynamics pertaining to the principal symbol of the operator $P$. Since we assumed that $\partial_t$ is uniformly time-like, the signature of the metric and the cofactor expansion for the inverse metric tell us that $g^{00}\lesssim -1$, as well. This allows us to divide through by $-g^{00}$ and preserve the assumptions on the operator coefficients (see \cites{MT12}). Hence, we may assume (without loss of generality) that $g^{00}=-1$. 

After these modifications, the principal symbol of $P$ is $$p(\tau, x,\xi)=-(\tau^2-2\tau g^{0j}(x)\xi_j-g^{ij}(x)\xi_i\xi_j).$$ This is considered as a smooth function on $T^*\R^4\setminus o$, where $o$ denotes the zero section. This symbol generates a bicharacteristic/Hamiltonian flow on $\R\times T^*\R^4$ given by  $\varphi_s(w)=\left(t_s(w),\tau_s(w), x_s (w),\xi_s(w)\right)$
which solves  
$$\left\{
\arraycolsep=1pt
\def\arraystretch{1.2}
\begin{array}{rlrl}
\dot{t}_s&=\partial_\tau p(\varphi_s(w)),& \qquad
\dot{\tau}_s&=-\partial_t p(\varphi_s(w)),\\
\dot{x}_s&=\nabla_\xi p(\varphi_s(w)),& \qquad
\dot{\xi}_s&=-\nabla_x p(\varphi_s(w)),
\end{array}\right.$$
with initial data $w\in T^*\R^4.$  Explicitly, one can write the system as
$$\left\{
\arraycolsep=1pt
\def\arraystretch{1.2} 
\begin{array}{rlrl}
\dot{t}_s&=-2\tau_s+2g^{0j}(x_s)[\xi_s]_j, & 
\dot{\tau}_s&=0,\\
(\dot{x}_s)_k&=2\tau_sg^{0k}(x_s)+2g^{kj}(x_s)[\xi_s]_j,& \qquad
(\dot{\xi}_s)_k&=-2\tau_s\partial_{x_k} g^{0j}(x_s)[\xi_s]_j-\partial_{x_k} g^{ij}(x_s)[\xi_s]_i[\xi_s]_j.
\end{array}\right.$$

Since $g$ is smooth and asymptotically flat, and $\partial_t$ is uniformly time-like, we have a unique, smooth, globally-defined flow with smooth dependence on the data.  We will have particular interest in \textit{null} bicharacteristics, i.e. those with initial data lying in the zero set of $p$ (also called the \textit{characteristic set} of $P$ and denoted $\text{Char}(P)$). Using the flow $\varphi_s$, we define the \textit{forward} and \textit{backward trapped} and \textit{non-trapped} sets with respect to $\varphi_s$, respectively, as
\begin{align*}
\Gamma_{tr}&=\left\lbrace w\in T^*\R^4\setminus o:
\sup_{s\geq 0} |x_{ s}(w)|<\infty\right\rbrace \cap \text{Char}(P),\\
\Lambda_{tr}&=\left\lbrace w\in T^*\R^4\setminus o:
\sup_{s\geq 0} |x_{- s}(w)|<\infty\right\rbrace \cap \text{Char}(P),\\
\Gamma_{\infty}&=\left\lbrace w\in T^*\R^4\setminus o:
 |x_{s}(w)|\rightarrow\infty\text{ as }s\rightarrow\infty\right\rbrace \cap \text{Char}(P),\\
\Lambda_{\infty}&=\left\lbrace w\in T^*\R^4\setminus o:
 |x_{-s}(w)|\rightarrow\infty \text{ as }s\rightarrow\infty\right\rbrace \cap \text{Char}(P).
\end{align*} 

The \textit{trapped} and \textit{non-trapped} sets are defined as 
$$\Omega^p_{tr}=\Gamma_{tr}\cap\Lambda_{tr}\qand\Omega^p_{\infty}=\Gamma_{\infty}\cap\Lambda_{\infty},$$ respectively. 
\begin{definition}
    The flow is said to be  \textit{non-trapping} if $\Omega^p_{tr}{\,=\,}\emptyset$.
\end{definition} 
Now, we may state the {geometric control condition} precisely. Recall that our damping function was denoted $a$.
\begin{definition}
    We say that geometric control holds if \begin{align}\label{GCC}
    (\forall w\in \Omega^p_{tr})(\exists s\in\R)\ \ a(x_s(w))>0.
    \end{align}
\end{definition} In contrast to the definition in \cites{RT74} (given in Assumption (A)), we apply this condition specifically to the trapped null bicharacteristic rays (since \textit{all} null bicharacteristic rays are trapped when the manifold is compact, such a specification was unnecessary in \cites{RT74}). We will assume that (\ref{GCC}) holds. Note that if (\ref{GCC}) holds and $a\equiv 0$, then $\Omega^p_{tr}$ must be empty, meaning that the flow is non-trapping. In this case, we are back in the setting of \cites{mst20}. For this reason, we will assume that $a>0$ on an open set.

It will be beneficial to utilize a scaling property of $P$. Given a solution $u$ to $Pu=f$, consider  $$\tilde{v}(t,x):=\gamma^{-2} u(\gamma t,\gamma x),\qquad\gamma>0.$$ If we call $$\tilde{P}=D_\alpha \tilde{g}^{\alpha\beta}D_\beta+i\gamma \tilde{a} D_t,\qquad \tilde{g}^{\alpha\beta}(x)=g^{\alpha\beta}(\gamma x),\qquad \tilde{a}(x)=a(\gamma x),$$ then $\tilde{v}$ solves  $$\tilde{P} \tilde{v}=\tilde{f},\qquad \tilde{f}(t,x)=f(\gamma t,\gamma x)$$ if and only if $u$ solves  $Pu=f$ (we can similarly undo the scaling to move between the frameworks). Notice that the scaled problem allows for an arbitrarily large constant $\gamma$ in front of the damping.

Analogous Hamiltonian systems and trapped sets exist for the principal symbol $\tilde{p}$ of $\tilde{P}$, and this amounts to simply replacing $g$ by $\tilde{g}$. If we assume that geometric control holds for the flow generated by $p$, then we must check that it holds for the scaled problem.
\begin{prop}
Assume that (\ref{GCC}) holds. Then, for any $\gamma>0$, (\ref{GCC}) holds for the flow generated by $\tilde{p}$, with $a$ replaced by $\tilde{a}$.
\end{prop}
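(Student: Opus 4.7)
The plan is to verify the statement by exhibiting an explicit conjugacy between the Hamiltonian flow generated by $\tilde{p}$ and the one generated by $p$, and then transferring the geometric control condition across this conjugacy. Writing the principal symbol of $\tilde{P}$ explicitly as
\[
\tilde{p}(\tau,x,\xi)=-\bigl(\tau^2-2\tau g^{0j}(\gamma x)\xi_j-g^{ij}(\gamma x)\xi_i\xi_j\bigr)=p(\tau,\gamma x,\xi),
\]
I would define, for a fixed initial point $w=(t_0,\tau_0,x_0,\xi_0)$ and its flow $\tilde{\varphi}_s(w)=(\tilde{t}_s,\tilde{\tau}_s,\tilde{x}_s,\tilde{\xi}_s)$ under $\tilde{p}$, the rescaled curve
\[
\bigl(T(\sigma),\mathcal{T}(\sigma),X(\sigma),\Xi(\sigma)\bigr):=\bigl(\gamma\,\tilde{t}_{\sigma/\gamma},\ \tilde{\tau}_{\sigma/\gamma},\ \gamma\,\tilde{x}_{\sigma/\gamma},\ \tilde{\xi}_{\sigma/\gamma}\bigr).
\]
A direct chain-rule computation from the explicit Hamilton equations for $\tilde{p}$ shows that the factors of $\gamma$ produced by the $\sigma$-derivative (through $ds/d\sigma=1/\gamma$) and the derivatives of $g^{\alpha\beta}(\gamma x)$ in $x$ cancel exactly against the $\gamma$ factors in the rescaling of the $x$ and $t$ components, so that $(T,\mathcal{T},X,\Xi)(\sigma)$ satisfies the Hamiltonian system for $p$ with initial data $\Phi(w):=(\gamma t_0,\tau_0,\gamma x_0,\xi_0)$. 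In other words, $\varphi_\sigma(\Phi(w))=(T,\mathcal{T},X,\Xi)(\sigma)$.

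Next I would transfer the characteristic set and trapping. The identity $\tilde{p}(\tau,x,\xi)=p(\tau,\gamma x,\xi)$ gives $w\in\text{Char}(\tilde{P})\iff \Phi(w)\in\text{Char}(P)$. Moreover, $|X(\sigma)|=\gamma|\tilde{x}_{\sigma/\gamma}|$, and likewise for the backward flow, so $w\in\tilde{\Omega}^p_{tr}$ if and only if $\Phi(w)\in\Omega^p_{tr}$.

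To conclude, let $w\in\tilde{\Omega}^p_{tr}$. Then $\Phi(w)\in\Omega^p_{tr}$, so by the assumed geometric control condition for $p$ there exists $\sigma\in\R$ with $a(x_\sigma(\Phi(w)))>0$, that is, $a(X(\sigma))>0$. Setting $s=\sigma/\gamma$, we obtain $a(\gamma\tilde{x}_s)>0$, i.e.\ $\tilde{a}(\tilde{x}_s)>0$, which is exactly (\ref{GCC}) for the flow of $\tilde{p}$ with damping $\tilde{a}$. I do not expect a genuine obstacle here: the statement is essentially a symmetry/covariance property, and the only nontrivial content is the bookkeeping in the chain-rule calculation that identifies the two flows up to the rescaling of the parameter and the base point.
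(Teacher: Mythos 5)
Your proof is correct and takes essentially the same approach as the paper: construct an explicit conjugacy between the flows of $\tilde{p}$ and $p$ via a scaling plus ODE uniqueness, then transfer GCC across it. The only difference is bookkeeping — the paper scales all four components $(t,\tau,x,\xi)\mapsto\gamma(t,\tau,x,\xi)$ with no reparametrization, while you scale only $(t,x)$ and absorb the homogeneity in $(\tau,\xi)$ into a time reparametrization $\sigma=\gamma s$; the two conjugacies are equivalent by the degree-2 homogeneity of $p$ in $(\tau,\xi)$, and both yield the result.
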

Note that since $g^{00}\equiv -1,$ it follows that $\tilde{g}^{00}\equiv -1.$
\begin{proof}
The flow generated by $\tilde{p}$ solves the system
$$\left\{\begin{aligned}
    \frac{d}{ds} \tilde{t}_s&=-2\tilde{\tau}_s+2\tilde{g}^{0j}(\tilde{x}_s)[\tilde{\xi}_s]_j,\\
    \frac{d}{ds}\tilde{\tau}_s&=0,\\
    \frac{d}{ds}(\tilde{x}_s)_k&=2\tilde{\tau}_s \tilde{g}^{0k}(\tilde{x}_s) +2\tilde{g}^{kj}(\tilde{x}_s)[\tilde{\xi}_s]_j,\\
    \frac{d}{ds}(\tilde{\xi}_s)_k&=
    -2\tilde{\tau}_s\partial_{x_k}\tilde{g}^{0j}(\tilde{x}_s)[\tilde{\xi}_s]_j-\partial_{x_k}\tilde{g}^{ij}(\tilde{x}_s)[\tilde{\xi}_s]_i[\tilde{\xi}_s]_j, \\
    (\tilde{t}_s,\tilde{\tau}_s,\tilde{x}&_s,\tilde{\xi}_s)\big|_{s=0}=(t,\tau,x,\xi).
\end{aligned}\right.$$
Applying the chain rule and multiplying through by $\gamma$ provides us with the system
$$\left\{\begin{aligned}
    \frac{d}{ds} (\gamma \tilde{t}_s)&=-2(\gamma\tilde{\tau}_s)+2{g}^{0j}(\gamma \tilde{x}_s)[\gamma\tilde{\xi}_s]_j,\\
    \frac{d}{ds}(\gamma \tilde{\tau}_s)&=0,\\
    \frac{d}{ds}(\gamma \tilde{x}_s)_k&=2(\gamma \tilde{\tau}_s) {g}^{0k}(\gamma \tilde{x}_s) +2{g}^{kj}(\gamma \tilde{x}_s)[\gamma\tilde{\xi}_s]_j,\\
    \frac{d}{ds}(\gamma\tilde{\xi}_s)_k&=-2(\gamma \tilde{\tau}_s)[(\partial_{x_k}{g}^{0j})(\gamma \tilde{x}_s)][\gamma\tilde{\xi}_s]_j-[(\partial_{x_k}{g}^{ij})(\gamma \tilde{x}_s)][\gamma\tilde{\xi}_s]_i[\gamma\tilde{\xi}_s]_j, \\
    \big((\gamma \tilde{t})_s,(\gamma &\tilde{\tau})_s,(\gamma \tilde{x})_s,(\gamma\tilde{\xi})_s\big)\big|_{s=0}=(\gamma t,\gamma \tau,\gamma x,\gamma \xi).
\end{aligned}\right.$$
This is the same system that is solved by the Hamiltonian flow generated by $p$ with initial data $(\gamma t,\gamma \tau,\gamma x,\gamma \xi).$ By uniqueness, we can conclude that 
$$\left\{\arraycolsep=1pt
\begin{array}{rlrl}
\gamma\tilde{t}_s(t,\tau,x,\xi)&=t_s(\gamma t,\gamma \tau,\gamma x,\gamma\xi),& \qquad
\gamma\tilde{\tau}_s(t,\tau, x,\xi)&=\tau_s(\gamma t,\gamma \tau,\gamma x,\gamma\xi)\\
\gamma\tilde{x}_s(t,\tau,x,\xi)&=x_s(\gamma t,\gamma \tau,\gamma x,\gamma\xi),& \qquad
\gamma\tilde{\xi}_s(t,\tau, x,\xi)&=\xi_s(\gamma t,\gamma \tau,\gamma x,\gamma\xi).
\end{array}\right.
$$
Now, let $w=\Omega^{\tilde{p}}_{tr}.$ From the above, we have that $$\tilde{x}_s(w)=\gamma^{-1}x_{s}(\tilde{w}),\qquad \tilde{w}=\gamma w.$$ Since $$\sup_{s\in\R}|x_{s}(\tilde{w})|=\gamma\sup_{s\in\R}|\tilde{x}_s(w)|<\infty,$$ it follows that $\tilde{w}\in\Omega^p_{tr}$. By (\ref{GCC}), there exists $s'\in\R$ so that $a(x_{s'}(\tilde{w}))>0,$ and so $$\tilde{a}(\tilde{x}_{s'}(w))=a(\gamma \tilde{x}_{s'}(w))=a(x_{s'}(\tilde{w}))>0,$$ which completes the proof. 
\end{proof}

Now that we have shown that geometric control is invariant under scaling, we will fix a large $\gamma>0$ and study the problem from the scaled perspective (where our damping is now multiplied by $\gamma$) while reverting back to our original notation ($x$ and $\xi$, no tildes, etc.). More precise conditions on the size of $\gamma$ will come in Section \ref{escape section}.  It is readily seen that it is equivalent to prove Theorem \ref{high freq est} for the scaled problem, where we now have a large constant in front of the damping term. 

Our proof of Theorem \ref{high freq est} is a positive commutator argument. At the symbolic level, this requires the construction of an escape function (as well as a lower-order correction). We must consider the skew-adjoint contribution of $P$, which will be a purely beneficial term due to the presence of the damping. Let $p$ and $s_{skew}$ represent the principal symbols of the self and skew-adjoint parts of $P$, respectively. Namely,
\begin{align*}
    {p}(\tau,x,\xi)&=-(\tau^2-2\tau {g}^{0j}(x)\xi_j-{g}^{ij}(x)\xi_i\xi_j)\\
    {s}_{skew}(\tau,x,\xi)&=i\gamma \tau a (x).
\end{align*} The multiplication by $\gamma$ in $s_{skew}$ will prove advantageous for a bootstrapping argument, which is precisely why we implement the $\gamma$-scaling. Now, we are ready to state our escape function result, which we will prove in Section \ref{escape section}.
\begin{Lemma}\label{escape}
For all $\lambda>1$, there exist symbols $q_j\in S^j(T^*\R^3)$ and $m\in S^0(T^*\R^3)$, all supported in $|\xi|\geq\lambda$, so that $$ H_{{p}}q-2i{s}_{skew}q+{p}m\gtrsim \mathbbm{1}_{|\xi|\geq \lambda}\inprod{x}^{-2}\left(\tau^2+|\xi|^2\right),$$ where $q=\tau q_0+q_1$. \end{Lemma}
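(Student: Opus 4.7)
The proof would be a positive-commutator construction at the symbol level, with $q$ and $m$ built by patching two different multipliers: an escape function tailored to the non-trapping regime away from the trapped set (of Metcalfe--Sterbenz--Tataru type), and a damping-based multiplier on a neighborhood of the trapped set (exploiting GCC). The large parameter $\gamma$ arising from the scaling and the frequency threshold $\lambda$ are both reserved for the end of the argument so that they can be chosen to absorb various error terms.

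The first step is algebraic: split the Hamilton vector field as $H_p = 2\tau H_{g^{0j}\xi_j} + H_{g^{ij}\xi_i\xi_j}$ and expand
\[
H_p q - 2 i s_{skew}\, q + p\, m
\]
as a quadratic in $\tau$ using the ansatz $q = \tau q_0 + q_1$, $m = m(x,\xi)$, and the identity $-2is_{skew} = 2\gamma\tau a$. The coefficient of $\tau^2$ is $2(Xq_0 + \gamma a q_0) - m$ with $X = H_{g^{0j}\xi_j}$, and the coefficient of $\tau^0$ is $Yq_1 + g^{ij}\xi_i\xi_j\, m$ with $Y = H_{g^{ij}\xi_i\xi_j}$. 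I would aim to make the $\tau^2$ coefficient bounded below by $\inprod{x}^{-2}$ and the $\tau^0$ coefficient by $\inprod{x}^{-2}|\xi|^2$, with the $\tau^1$ cross term absorbed via the discriminant inequality applied to the resulting quadratic in $\tau$. The correction $pm$ provides ellipticity away from $\text{Char}(P)$, so it is enough to obtain positivity on the characteristic set modulo multiples of $p$.

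In the region of phase space away from a fixed neighborhood of the projection $\pi(\Omega^p_{tr})$ to the unit cosphere bundle, trajectories either escape to spatial infinity (where asymptotic flatness makes the flow a perturbation of the Minkowski flow) or traverse a bounded region in finite time. Here I would import the MST construction: a radial multiplier adapted to the SR05-type positivity at infinity, together with an escape function built by integrating along the flow on the intermediate region, produce $q^{\text{out}}$ and $m^{\text{out}}$ satisfying $H_p q^{\text{out}} + p m^{\text{out}} \gtrsim \inprod{x}^{-2}(\tau^2 + |\xi|^2)$ on this outer region. The damping plays no role in this piece.

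For the trapped region I would use GCC. After checking (via asymptotic flatness, in Section \ref{flow results section}) that $\pi(\Omega^p_{tr})$ is compact in the unit cosphere bundle, GCC combined with compactness yields a uniform $T > 0$ and a smooth nonnegative cutoff $\chi$ supported in $[-T,T]$ such that
\[
\phi(x,\xi) := \int \chi(s)\, a\bigl(x_s(x,\xi)\bigr)\, ds
\]
is bounded below by a positive constant on a neighborhood $U$ of $\pi(\Omega^p_{tr})$. I would then set $q^{\text{in}}$ proportional to $\tau\phi$, multiplied by smooth cutoffs to $U$ and to $|\xi|\geq\lambda$ and homogenized so it lies in the right symbol class. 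Then $2\gamma\tau a\, q^{\text{in}}\gtrsim\gamma\tau^2 a\phi$ on $U$, while $H_p q^{\text{in}}$ is uniformly bounded on $U$ in terms of $\|\chi'\|_{L^\infty}$ and $a$; for $\gamma$ chosen large this damping contribution dominates and produces the required positivity inside $U$. The two pieces $q^{\text{out}}$ and $q^{\text{in}}$ are then combined via a smooth phase-space partition of unity. The principal obstacle is the control of the commutator errors generated at the patching boundary, where neither multiplier's gradient is obviously large. These are handled by choosing, in order, the partition cutoffs first, then $\gamma$ large enough that the damping-based multiplier dominates both the patching error and symbol-calculus errors in the inner region, and finally $\lambda$ large so that residual lower-order errors from $\chi_{|\xi|\geq\lambda}$ and from asymptotic flatness are negligible. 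Throughout, membership of $q_j$ and $m$ in the asserted symbol classes follows from the smoothness of the constructed functions and the symbolic estimates on derivatives of $g$ guaranteed by the $AF$ condition.
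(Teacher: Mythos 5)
Your overall commutator framework and the $q=\tau q_0+q_1$ ansatz reduced to a $\tau$-quadratic with a discriminant argument are sound, and the latter is in fact what the paper does in the final step to construct the correction $m$. The outer part (an escape function of Doi/MST type in the non-trapped interior and a radial multiplier with $SR05$-type positivity at infinity) also matches the paper's Lemma \ref{non-trap}. The main structural difference is that the paper organizes the whole argument through the half-wave factoring $p=-p^+p^-$, building escape functions $q^\pm$ for each $p^\pm$ on $T^*\R^3$ and only recombining at the end; your proposal works directly with the full $H_p=2\tau X+Y$ decomposition. That is a legitimate alternative in principle.

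However, your construction of the inner multiplier near the trapped set has a genuine gap. You take
\[
\phi(x,\xi)=\int\chi(s)\,a\bigl(x_s(x,\xi)\bigr)\,ds,\qquad q^{\mathrm{in}}\propto\tau\phi,
\]
and claim that the term $-2is_{skew}\,q^{\mathrm{in}}=2\gamma\tau^2 a\,\phi$ dominates $H_pq^{\mathrm{in}}$ once $\gamma$ is large. But this source of positivity is multiplied by $a$ evaluated at the \emph{base point}, and $a$ is certainly not positive on a full neighborhood $U$ of $\pi(\Omega^p_{tr})$ --- geometric control only guarantees that trapped rays \emph{eventually flow into} $\{a>0\}$, not that $a>0$ on the trapped set. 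At a point $w\in U$ with $a(w)=0$ the damping term vanishes identically, while $H_pq^{\mathrm{in}}(w)=\tau\,H_p\phi(w)\cdot(\text{cutoffs})+\dots$ is generically non-zero and of indefinite sign (indeed $H_p\phi=-\int\chi'(s)\,a(x_s)\,ds$, and $\chi'$ changes sign). No choice of $\gamma$ or $\lambda$ can repair this, because the required lower bound is $\gtrsim\inprod{x}^{-2}(\tau^2+|\xi|^2)$ pointwise, and the left side is not even non-negative at such $w$.

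What the paper does (following Bouclet--Royer) is structurally different and precisely avoids this. The inner escape function near $w\in\dot\Omega_R^\pm$ is of the form $q_w=\int_0^{s_w}\chi_w\circ\varphi^\pm_{-s}\,ds$ with $\chi_w$ a bump at $w$ and $s_w$ the GCC time at which the $w$-ray enters $\{a>0\}$. The Poisson bracket then telescopes: $H_{p^\pm}q_w=\chi_w-\chi_w\circ\varphi^\pm_{-s_w}$. The \emph{positive} bump $\chi_w$ supplies the required lower bound directly, independent of whether $a(w)=0$, while the \emph{negative} endpoint term $-\chi_w\circ\varphi^\pm_{-s_w}$ is supported in $\{a>\alpha_w\}$ and is absorbed by $C_w a$. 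The damping is thus used only to absorb a well-localized error, not as the primary source of positivity. Your $\phi$ collapses this two-term structure by integrating $a$ against a symmetric weight, which loses the positive bump entirely. To repair your proof you would need to replace $\phi$ by a one-sided, cutoff-based escape function of the BR14/paper form (or equivalently, reinstate the half-wave split, where the one-sided construction lives naturally on $T^*\R^3$).
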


Here, $S^m(T^*\R^n)$ denotes the $m^{\operatorname{th}}$-order \textit{Kohn-Nirenberg} symbol class. To each $q\in S^m(T^*\R^n)$, we will associate the pseudodifferential operator $q^{\operatorname{w}}(x,D):\mathcal{S}(\mathbb{R}^n)\rightarrow\mathcal{S}(\mathbb{R}^n),$ namely the \textit{Weyl quantization} of $p$, which is defined via the action $$q^{\operatorname{w}}(x,D)u(x)=(2\pi)^{-n}\int\limits_{\R^n}\int\limits_{\R^n}e^{i(x-y)\cdot\xi}q\left(\frac{x+y}{2},\xi\right)u(y)\, dyd\xi.$$

In the proof of Lemma \ref{escape}, it will be useful to work with the half-wave decomposition, which allows us to avoid the cross terms in the principal symbol. To that end, we factor $p$ as $$p(\tau, x,\xi)=-(\tau-b^+(x,\xi))(\tau-b^-(x,\xi)),$$
where $$b^{\pm}(x,\xi)={g^{0j} \xi_j\pm\sqrt{\left(g^{0j}\xi_j\right)^2+g^{ij}\xi_i\xi_j}}.$$ 

Observe that $b^\pm$ are both homogeneous of degree 1 in $\xi$. Additionally, they are both signed. 
 \begin{prop}\label{b pm sign} For any $(x,\xi)\in T^*\R^3\setminus o$, we have that
 $b^+(x,\xi)>0>b^-(x,\xi).$
 \end{prop}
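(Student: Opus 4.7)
The plan is to exploit the ellipticity hypothesis (\ref{ellipticity}) together with the explicit formula for $b^\pm$ in order to show that the square-root term strictly dominates $g^{0j}\xi_j$ in absolute value. First I would fix $(x,\xi) \in T^*\R^3 \setminus o$ and examine the radicand
\[
R(x,\xi) := (g^{0j}(x)\xi_j)^2 + g^{ij}(x)\xi_i\xi_j.
\]
The first summand is non-negative, being a square, and by (\ref{ellipticity}) the second summand satisfies $g^{ij}\xi_i\xi_j \approx |\xi|^2 > 0$ whenever $\xi \neq 0$. Therefore $R(x,\xi) > (g^{0j}\xi_j)^2$, so taking the (positive) square root gives the strict inequality
\[
\sqrt{R(x,\xi)} > |g^{0j}(x)\xi_j|.
\]

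From this, the signs of $b^\pm$ follow immediately. Indeed,
\[
b^+(x,\xi) = g^{0j}\xi_j + \sqrt{R(x,\xi)} > g^{0j}\xi_j + |g^{0j}\xi_j| \geq 0,
\]
and similarly
\[
b^-(x,\xi) = g^{0j}\xi_j - \sqrt{R(x,\xi)} < g^{0j}\xi_j - |g^{0j}\xi_j| \leq 0,
\]
which yields the desired bound $b^+(x,\xi) > 0 > b^-(x,\xi)$.

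There is really no substantive obstacle here; the only subtlety worth noting is that one needs strict (rather than weak) positivity in the ellipticity bound to guarantee that the square root strictly exceeds $|g^{0j}\xi_j|$, which is precisely what (\ref{ellipticity}) supplies once $\xi \neq 0$. Note also that we do not need to rule out $\xi = 0$ with $\tau \neq 0$ in the cotangent variable, since $b^\pm$ are functions on $T^*\R^3 \setminus o$ and the statement concerns only the spatial cotangent direction.
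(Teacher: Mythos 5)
Your proof is correct and follows essentially the same route as the paper: both use the ellipticity bound (\ref{ellipticity}) to show that the radicand strictly exceeds $(g^{0j}\xi_j)^2$, take the square root to get $\sqrt{R(x,\xi)} > |g^{0j}\xi_j|$, and then read off the signs of $b^\pm$ from the explicit formula. The only cosmetic difference is that the paper first records $b^+ > b^-$, which is not needed for the stated conclusion and which you correctly omit.
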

 \begin{proof}
 Let $\xi\neq 0.$ First, we show that $b^+>b^-$. Indeed, observe that $$b^+-b^-=2\sqrt{\left(g^{0j}\xi_j\right)^2+g^{ij}\xi_i\xi_j}>0$$ using the ellipticity (see \ref{ellipticity}). Using ellipticity again, we have that $$\sqrt{\left(g^{0j}\xi_j\right)^2+g^{ij}\xi_i\xi_j}> |g^{0j}\xi_j| .$$ Thus, $$b^+> g^{0j}\xi_j+ |g^{0j}\xi_j|\geq 0,\qquad b^-< g^{0j}\xi_j- |g^{0j}\xi_j|\leq 0.$$ 
 \end{proof}

We will call  $p^\pm =\tau-b^{\pm},$ so that $p=-p^+p^-.$ In particular, $p=0$ if and only if $p^+=0$ or $p^-=0$; due to Proposition \ref{b pm sign}, it cannot be the case that $p^+(w)=p^-(w)=0$ for any $w\in T^*\R^4\setminus o.$ The Hamiltonians $p^\pm$ also generate flows  
$\varphi_s^\pm(w){\,=\,}\left(t_s^\pm(w),\tau_s^\pm(w), x_s^\pm (w),\xi_s^\pm(w)\right)$ on $\R\times T^*\R^4$ 
which solve the Hamiltonian systems 
$$\left\{
\arraycolsep=1pt
\def\arraystretch{1.2}
\begin{array}{rlrl}
\dot{t}_s^\pm&=\partial_\tau p^{\pm}(\varphi_s^\pm(w)), &
\dot{\tau}_s^\pm&=-\partial_t p^{\pm}(\varphi_s^\pm(w)),\\
\dot{x}_s^\pm&=\nabla_\xi p^{\pm}(\varphi_s^\pm(w)), \qquad&
\dot{\xi}_s^\pm&=-\nabla_x p^{\pm}(\varphi_s^\pm(w)),
\end{array}\right.$$
with initial data $w\in T^*\R^4.$ Note that
$$\left\{
\arraycolsep=1pt
\def\arraystretch{1.2}
\begin{array}{rlrl}
\dot{t}_s^\pm&=1, &
\dot{\tau}_s^\pm&=0,\\
\dot{x}_s^\pm&=-\nabla_\xi b^{\pm}(\varphi_s^\pm(w)),\qquad &
\dot{\xi}_s^\pm&= \nabla_x b^{\pm}(\varphi_s^\pm(w)).
\end{array}\right.$$
 There is a direct correspondence between null bicharacteristics for $\varphi_s$ and null bicharacteristics for $\varphi_s^\pm$.
\begin{prop} \label{bichar equiv}
Every null bicharacteristic for the flow generated by $p$ is a null bicharacteristic for the flow generated by either $p^+$ or $p^-$. The converse is also true.
\end{prop}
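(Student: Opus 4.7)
The plan is to exploit the factorization $p = -p^+ p^-$ at the level of Hamiltonian vector fields and then use Proposition \ref{b pm sign} to pass between the flows via a time reparametrization. Using the Leibniz rule for the Poisson bracket (equivalently, for the Hamiltonian vector field), I would first record the identity
\begin{equation*}
H_p \;=\; H_{-p^+ p^-} \;=\; -p^-\, H_{p^+} \;-\; p^+\, H_{p^-},
\end{equation*}
which holds pointwise on $T^*\R^4\setminus o$. A short computation also gives the Poisson bracket identities
\begin{equation*}
H_p p^+ \;=\; \{-p^+p^-,\,p^+\} \;=\; p^+\{p^+,p^-\}, \qquad H_p p^- \;=\; -p^-\{p^+,p^-\},
\end{equation*}
so $p^+$ vanishes along any integral curve of $H_p$ that starts on $\{p^+=0\}$, and similarly for $p^-$.

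Now let $\gamma(s)$ be a null bicharacteristic for $p$, i.e.\ an integral curve of $H_p$ with $p(\gamma(0))=0$. Since $p=-p^+p^-$, we must have $p^+(\gamma(0))=0$ or $p^-(\gamma(0))=0$; by Proposition \ref{b pm sign}, not both. Consider the case $p^+(\gamma(0))=0$ (the other is symmetric). By the bracket identity above, $p^+\equiv 0$ along $\gamma$, and by Proposition \ref{b pm sign}, $p^-$ is nonvanishing (and of a fixed sign) along $\gamma$. The factorization then reduces to $H_p(\gamma(s)) = -p^-(\gamma(s))\, H_{p^+}(\gamma(s))$, so $\gamma$ satisfies
\begin{equation*}
\dot{\gamma}(s) \;=\; -\,p^-(\gamma(s))\, H_{p^+}(\gamma(s)).
\end{equation*}
Introducing the time change $\sigma(s)$ by $d\sigma/ds = -p^-(\gamma(s))$ (which is a smooth, strictly monotone diffeomorphism precisely because $p^-$ has a definite sign along $\gamma$ by Proposition \ref{b pm sign}), the reparametrized curve $\tilde{\gamma}(\sigma) := \gamma(s(\sigma))$ satisfies $\tilde{\gamma}'(\sigma) = H_{p^+}(\tilde{\gamma}(\sigma))$. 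Since $p^+$ still vanishes at $\tilde{\gamma}(0)=\gamma(0)$, this exhibits $\gamma$, up to reparametrization, as a null bicharacteristic for $p^+$.

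For the converse, suppose $\eta(\sigma)$ is an integral curve of $H_{p^+}$ with $p^+(\eta(0))=0$ (the case of $p^-$ is identical). Then $p^+\equiv 0$ along $\eta$, so $p = -p^+p^- \equiv 0$ as well, and by Proposition \ref{b pm sign} we again have $p^-\neq 0$ along $\eta$. From the factorization we may invert the previous step, reparametrizing $\eta$ by $ds/d\sigma = -1/p^-(\eta(\sigma))$ to obtain an integral curve of $H_p$ with null initial data, i.e.\ a null bicharacteristic for $p$. The only point that requires care is the reparametrization itself, and that is exactly what Proposition \ref{b pm sign} is designed to deliver, so there is no genuine obstacle beyond correctly invoking it; the proof reduces to this Leibniz rule plus time-change argument.
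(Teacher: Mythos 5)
Your argument is correct and follows essentially the same strategy as the paper: factor the Hamiltonian vector field using $p=-p^+p^-$, show that the constraint $p^+=0$ (or $p^-=0$) propagates along the $H_p$-flow, and then reparametrize time to exhibit the $H_p$-curve as an $H_{p^\pm}$-curve, with Proposition \ref{b pm sign} guaranteeing that the speed factor $-p^\mp$ has a definite sign. The one place you argue differently from the paper is in the propagation step: the paper observes that $\tau$ is conserved along the flow, initially equals $b^+(x,\xi)>0$, and therefore can never equal $b^-(x',\xi')<0$, so $p^-$ never vanishes and hence $p^+$ must. You instead derive the identity $H_p p^+ = p^+\{p^+,p^-\}$ from the Leibniz rule and invoke uniqueness for the linear ODE $\frac{d}{ds}\bigl(p^+\circ\gamma\bigr) = \bigl(p^+\circ\gamma\bigr)\,\{p^+,p^-\}\circ\gamma$ with zero initial data. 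Both are valid; your version is a bit more conceptual and would adapt unchanged to any smooth factorization $p=-p^+p^-$ with disjoint zero sets, while the paper's exploits the specific structure that $\tau$ is a flow invariant. (One very small point: since $\dot t_s = -p^-(\varphi_s)<0$ on $\{p^+=0\}$, your reparametrization $\sigma$ is strictly decreasing in $s$; that of course causes no problem, but it does mean the diffeomorphism is orientation-reversing. The paper's phrasing ``$t_s$ is strictly increasing'' has the same sign slip, so you are in good company.)
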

\begin{proof}
Recall that for any $(t',\tau',x',\xi'){\,=:\,}w\in T^*\R^4\setminus o,$ we have that $p(w){\,=\,}0$ if and only if either $p^+(w){\,=\,}0$ or $p^-(w){\,=\,}0$. Without loss of generality, suppose that $p^+(w){\,=\,}0.$ The Hamiltonians $p$ and $p^+$ generate the systems
\begin{equation}\label{bichar sys 1}
\left\{
\arraycolsep=1pt
\def\arraystretch{1.2}
\begin{array}{rlrl}
\dot{t}_s&=-p^+(\varphi_s)-p^-(\varphi_s), &
\dot{\tau}_s&=0,\\
(\dot{x}_s)_k&=-p^+(\varphi_s) p^-_{\xi_k}(\varphi_s)-p^-(\varphi_s) p^+_{\xi_k}(\varphi_s), \qquad &
(\dot{\xi}_s)_k&=p^+(\varphi_s) p_{x_k}^-(\varphi_s)+p^-(\varphi_s) p_{x_k}^+(\varphi_s),
\end{array}\right.
\end{equation}
and
\begin{equation}\label{bichar sys 2}\left\{
\arraycolsep=1pt
\def\arraystretch{1.2}
\begin{array}{rlrl}
\dot{t}_s^+&=1, &
\dot{\tau}_s^+&=0,\\
(\dot{x}_s^+)_k&= p_{\xi_k}^{+}(\varphi_s^+),\qquad &
(\dot{\xi}_s^+)_k&=- p_{x_k}^{+}(\varphi_s^+),
\end{array}\right.\end{equation}
respectively. We will take both systems to have initial data $w.$

We claim that since $p^+(w){\,=\,}0,$ we must have that $p^+(\varphi_s(w)){\,=\,}0$ for all $s$. If not, then there would exist $s'$ so that $p^-(\varphi_{s'}(w)){\,=\,}0,$ i.e. $\tau_{s'}^-(w)=b^-(x_{s'}(w),\xi_{s'}(w)){\,<\,}0$. However, $\tau^-$ is constant and $p^+(w)=0,$ which implies that $\tau_s^-(w)=\tau'>0$ for all $s$.   

Thus, we can re-write (\ref{bichar sys 1}) as
\begin{equation}\label{bichar sys 3}\left\{
\arraycolsep=1pt
\def\arraystretch{1.2}
\begin{array}{rlrl}
\dot{t}_s&=-p^-(\varphi_s), &
\dot{\tau}_s&=0,\\
(\dot{x}_s)_k&=-p^-(\varphi_s) p^+_{\xi_k}(\varphi_s),\qquad &
(\dot{\xi}_s)_k&=p^-(\varphi_s) p_{x_k}^+(\varphi_s),
\end{array}\right.\end{equation} with initial data $(t_s,\tau_s,x_s,\xi_s)\big|_{s=0}{=w}.$
Notice that $t^+{\,=\,}t'+s,$ and so we may re-parameterize (\ref{bichar sys 2}) in terms of $t^+$:
$$ 
\left\{
\arraycolsep=1pt
\def\arraystretch{2.5}
\begin{array}{rlrl}
\dfrac{d}{dt^+}{t}_{t^+-t'}^+&=1 &
\dfrac{d}{dt^+}{\tau}_{t^+-t'}^+&=0,\\
\left(\dfrac{d}{dt^+}{x}_{t^+-t'}^+\right)_k&= p_{\xi_k}^{+}(\varphi_{t^+-t'}^+),\qquad &
\left(\dfrac{d}{dt^+}{\xi}_{t^+-t'}^+\right)_k&=- p_{x_k}^{+}(\varphi_{t^+-t'}^+),
\end{array}
\right.$$ with initial data $\Big(t^+_{t^+-t'},\tau^+_{t^+-t'},  x^+_{t^+-t'},\xi^+_{t^+-t'}\Big)\big|_{t^+=t'}{=w}.$

Next, we re-parameterize (\ref{bichar sys 3}) to change the flow variable from $s$ to $t$ (which can be done since $t_s$ is strictly increasing and hence invertible), generating the system 
\begin{equation*}\left\{
\arraycolsep=1pt
\def\arraystretch{2.5}
\begin{array}{rlrl}
\dfrac{d}{dt}{t}_{s(t)}&=1, &
\dfrac{d}{dt}{\tau}_{s(t)}&=0,\\
\left(\dfrac{d}{dt}{x}_{s(t)}\right)_k&=p^+_{\xi_k}(\varphi_{s(t)}),\qquad &
\left(\dfrac{d}{dt}{\xi}_{s(t)}\right)_k&=-p_{x_k}^+(\varphi_{s(t)}),
\end{array}\right.\end{equation*} which has initial data $\left(t_{s(t)},\tau_{s(t)},x_{s(t)},\xi_{s(t)}\right)\big|_{t=t'}{=w}.$
 An application of uniqueness theory yields that \newline\noindent $\varphi_{s(t)}(w)=\varphi_{t^+-t'}^+(w)$. The converse is similar by reversing the above process. 
\end{proof} 
When working with the factored flow, the decoupling of $(t,\tau)$ and $(x,\xi)$ allows us to project onto the $(x,\xi)$ components of the flow without worrying about loss of information. For this reason, we will write  $\Pi_{x,\xi}\circ \varphi^\pm$ as simply $\varphi^\pm$, where $\Pi_{x,\xi}(t,\tau,x,\xi){\,=\,}(x,\xi)$. Notice that when we project, we are no longer looking at null bicharacteristics but, rather, bicharacteristics with initial data having non-zero $\xi$ component.

Now, we may define all of the corresponding forward and backward trapped and non-trapped sets for the half-wave flows as
\begin{align*}
\Gamma_{tr}^{\pm}&=\left\lbrace w\in T^*\R^3\setminus o:
\sup_{s\geq 0} |x^\pm_{ s}(w)|<\infty\right\rbrace,\\
\Lambda_{tr}^{\pm}&=\left\lbrace w\in T^*\R^3\setminus o:
\sup_{s\geq 0} |x^\pm_{- s}( w)|<\infty\right\rbrace,\\
\Gamma_{\infty}^{\pm}&=\left\lbrace w\in T^*\R^3\setminus o:
|x^\pm_{s}(w)|\rightarrow\infty\ \operatorname{ as }\ s\rightarrow\infty\right\rbrace,\\ 
\Lambda_{\infty}^{\pm}&=\left\lbrace w\in T^*\R^3\setminus o:
|x^\pm_{-s}(w)|\rightarrow\infty\ \operatorname{ as }\ s\rightarrow\infty\right\rbrace.
\end{align*} The trapped and non-trapped sets are
\begin{align*}
    {\Omega}_{tr}^{\pm}=\Gamma_{tr}^\pm\cap \Lambda_{tr}^\pm&\qand  {\Omega}_{\infty}^\pm=\Gamma_{\infty}^\pm\cap \Lambda_{\infty}^\pm,\\
        {\Omega}_{tr}={\Omega}_{tr}^+\cup {\Omega}_{tr}^-
   &\qand
    {\Omega}_{\infty}={\Omega}_{\infty}^+\cup {\Omega}_{\infty}^-.
\end{align*}
Note that the identities
$$   {\Omega}_{tr}=\Pi_{x,\xi}(\Omega^p_{tr})\qand {\Omega}_{\infty}=\Pi_{x,\xi}(\Omega^p_{\infty})$$ hold as an immediate consequence of the factoring. Additionally, the factoring allows us to re-state the geometric control condition as 
$$\left(w\in \Omega_{tr}^+\implies  (\exists s\in\R)\  \left(a(x^+_s(w))>0\right)\right)\qand  \left(w\in \Omega_{tr}^-\implies  (\exists s\in\R)\  \left(a(x^-_s(w))>0\right)\right).$$
 If $w\in\Omega_{tr}$, then it is either trapped with respect the flow generated by $p^+$ or $p^-$ by Proposition \ref{bichar equiv}. If it is trapped with respect to $p^+,$ then there is a time so that $w$ is flowed along a $p^+$-bicharacteristic ray to a place where the damping is positive, and similarly if it is trapped with respect to $p^-$. 
\subsection{Results on the Flow} 
\label{flow results section}
Here, we establish results regarding the trapped/non-trapped sets and scalings for the flows, culminating in an extension of geometric control to bicharacteristic rays bounded either forward or backward in time. These results largely follow the path outlined in \cites{BR14}, although we require certain scaling results in order to utilize homogeneity arguments in later proofs (which were unnecessary in \cites{BR14} due to their use of semiclassical rescaling). In particular, Lemma \ref{nontrap lem} and Propositions \ref{decomp} and \ref{semi GCC} are analogous to results in Chapter 8 of \cites{BR14} (namely, Lemma 8.2 and Propositions 8.3 and 8.4, respectively).

We will start with a scaling result on the flow. 
\begin{prop}\label{scaling}
The flows generated by $p^\pm$ satisfy the scalings
$$\left\{
\begin{aligned}
    x^\pm_{ s}(x,\xi)&=x^\pm_s(x,\lambda \xi),\\
   \lambda \xi^\pm_{ s}(x,\xi)&=\xi^\pm_s(x,\lambda \xi)
\end{aligned}\right.$$
for any $\lambda>0.$ 
\end{prop}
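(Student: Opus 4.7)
The plan is to exploit the homogeneity of $b^\pm$ in $\xi$ and appeal to uniqueness of solutions to the Hamiltonian ODE system. Since $b^\pm(x,\xi) = g^{0j}\xi_j \pm \sqrt{(g^{0j}\xi_j)^2 + g^{ij}\xi_i\xi_j}$ is homogeneous of degree $1$ in $\xi$ (a fact already noted immediately before the statement), Euler's identity gives that $\nabla_\xi b^\pm$ is homogeneous of degree $0$ and $\nabla_x b^\pm$ is homogeneous of degree $1$ in $\xi$. This is the only structural fact we will need.

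The strategy is to define the candidate curves
\[
X(s) := x^\pm_s(x,\lambda\xi),\qquad \Xi(s) := \lambda^{-1}\xi^\pm_s(x,\lambda\xi),
\]
and check that $(X(s),\Xi(s))$ solves the projected Hamiltonian system driven by $p^\pm$ with initial data $(x,\xi)$. The initial conditions are immediate: $X(0)=x$ and $\Xi(0) = \lambda^{-1}(\lambda\xi) = \xi$. Differentiating and applying the system for $\varphi^\pm_s$ as recorded just above the proposition, one gets
\[
\dot X(s) = -\nabla_\xi b^\pm\bigl(X(s),\lambda\Xi(s)\bigr),\qquad \dot \Xi(s) = \lambda^{-1}\,\nabla_x b^\pm\bigl(X(s),\lambda\Xi(s)\bigr).
\]
By the degree-$0$ homogeneity of $\nabla_\xi b^\pm$ in $\xi$, $\nabla_\xi b^\pm(X,\lambda\Xi) = \nabla_\xi b^\pm(X,\Xi)$, while the degree-$1$ homogeneity of $\nabla_x b^\pm$ gives $\lambda^{-1}\nabla_x b^\pm(X,\lambda\Xi)=\nabla_x b^\pm(X,\Xi)$. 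Thus $(X,\Xi)$ satisfies exactly the same ODE system as $(x^\pm_s(x,\xi),\xi^\pm_s(x,\xi))$ with the same initial condition, and uniqueness (which holds by smoothness of $b^\pm$ on $T^*\R^3\setminus o$ together with the global existence already invoked for the Hamiltonian flow) forces $X(s) = x^\pm_s(x,\xi)$ and $\Xi(s) = \xi^\pm_s(x,\xi)$. Rearranging yields precisely the two claimed identities.

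There is no real obstacle here; the only minor point to be careful about is ensuring that the rescaled initial datum $(x,\lambda\xi)$ lies in $T^*\R^3\setminus o$ so that the flow is defined, which is automatic since $\lambda>0$ and the homogeneity and smoothness of $b^\pm$ are preserved away from the zero section. The argument is purely a consequence of $b^\pm$ being a $1$-homogeneous Hamiltonian in the fiber variable, so no use of asymptotic flatness or of the uniform timelike assumption beyond what is needed to define the flow globally is required.
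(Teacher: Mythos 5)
Your proof is correct and is essentially the paper's argument: both hinge on the $1$-homogeneity of $b^\pm$ in $\xi$ (hence degree-$0$/degree-$1$ homogeneity of $\nabla_\xi b^\pm$/$\nabla_x b^\pm$) together with uniqueness for the projected Hamiltonian ODE. The only cosmetic difference is direction — you rescale $\xi^\pm_s(x,\lambda\xi)$ down by $\lambda^{-1}$ and match it to the flow with data $(x,\xi)$, whereas the paper rescales $(x^\pm_s,\xi^\pm_s)(x,\xi)$ up by $\lambda$ and matches it to the flow with data $(x,\lambda\xi)$.
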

\begin{proof}
Label the functions on the right-hand side as $x_{s,\lambda}^\pm$ and $\xi_{s,\lambda}^\pm$, respectively. Using the homogeneity of $b^\pm$, the left-hand side $(x^\pm_s,\lambda\xi^\pm_s)$ solves the system 

$$\left\{
\arraycolsep=1pt
\def\arraystretch{1.5}
\begin{aligned}
    \dfrac{d}{d s}x_s^\pm&=\nabla_{\xi}p^\pm(x_s^\pm,\xi_s^\pm)=\nabla_\xi p^\pm(x_s^\pm, \lambda \xi_s^\pm), \\
  \dfrac{d}{d s}(\lambda \xi_s^\pm)&=-\lambda \nabla_{x}p^\pm(x_s^\pm,\xi^\pm_s)=-\nabla_x p^\pm(x_s^\pm,\lambda \xi_s^\pm), \\
    (x_s^\pm,\lambda \xi_s^\pm)\big|_{s=0}&=(x,\lambda \xi), 
\end{aligned}\right.$$
while the right-hand side solves
$$\left\{\begin{aligned}
    \frac{d }{d s}x_{s,\lambda}^\pm&= \nabla_{\xi}p^\pm(x_{s,\lambda}^\pm,\xi_{s,\lambda}^\pm ),\\
  \frac{d}{d s}(\xi_{s,\lambda}^\pm )&=- \nabla_{x}p^\pm(x_{s,\lambda}^\pm,\xi_{s,\lambda}^\pm),\\
  (x_{s,\lambda}^\pm,\xi_{s,\lambda}^\pm )\big|_{s=0}&=(x,\lambda \xi).
\end{aligned}\right.$$ 
Applying uniqueness theory completes the proof. 
\end{proof}
 This scaling implies that the trapped/non-trapped sets, and hence geometric control, are entirely determined by unit speed null bicharacteristics, i.e. by what happens on the unit cosphere bundle $S^* \R^3=\{(x,\xi)\in T^*\R^3: |\xi|=1\}.$ Indeed, observe that
 $$ x_s^\pm(x,\xi)=x^\pm_{s}(x,\xi/|\xi|).$$
 The forward/backward trapped sets are defined in terms of supremums of the above over $s$, while the forward/backward non-trapped sets are defined via limits in $s$, and the prior equation shows that all of these are unaffected by the scaling in the $\xi$ component of the initial data. A more pertinent scaling is given by the function $$\Phi^\pm(x,\xi)=\left(x,\frac{\xi}{|b^\pm(x,\xi)|}\right).$$ The utility of this scaling comes from the fact that $b^\pm$ is a constant of motion under the corresponding projected Hamiltonian flows and that  $$\left|{\frac{\xi}{b^+(x,\xi)}}\right|\approx 1,$$ which we now prove. 
 
 \begin{prop}\label{b scale}
 For any $(x,\xi)\in T^*\R^3\setminus o,$ $$\left|{\frac{\xi}{b^\pm(x,\xi)}}\right|\approx 1.$$
 \end{prop}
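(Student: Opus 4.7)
The plan is to prove the pair of two-sided bounds $|b^{\pm}(x,\xi)| \lesssim |\xi|$ and $|b^{\pm}(x,\xi)| \gtrsim |\xi|$ uniformly in $x$, which together with homogeneity will yield the claim. Since $b^{\pm}$ is homogeneous of degree $1$ in $\xi$, the ratio $|\xi|/|b^{\pm}(x,\xi)|$ is scale-invariant in $\xi$, so it suffices to work on the unit cosphere and then appeal to homogeneity, though I will simply bound $|b^{\pm}|$ against $|\xi|$ directly.

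For the upper bound, I would use the asymptotic flatness hypothesis, which gives uniform boundedness of the metric coefficients $g^{0j}(x)$ and $g^{ij}(x)$ on all of $\R^3$ (since $g - m \in AF$). By the triangle inequality applied to the explicit formula for $b^{\pm}$,
\[
|b^{\pm}(x,\xi)| \leq |g^{0j}(x)\xi_j| + \sqrt{(g^{0j}(x)\xi_j)^2 + g^{ij}(x)\xi_i\xi_j} \lesssim |\xi|,
\]
where the last inequality comes from Cauchy–Schwarz and the boundedness of the coefficients.

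For the lower bound, the key identity is obtained by multiplying out the factorization $p = -p^{+}p^{-}$: one computes
\[
b^{+}(x,\xi)\cdot b^{-}(x,\xi) \;=\; (g^{0j}\xi_j)^2 - \bigl[(g^{0j}\xi_j)^2 + g^{ij}\xi_i\xi_j\bigr] \;=\; -\,g^{ij}(x)\xi_i\xi_j,
\]
and invoking the ellipticity bound \eqref{ellipticity} yields $|b^{+}(x,\xi)\cdot b^{-}(x,\xi)| \approx |\xi|^2$. Combining this with the upper bound $|b^{\mp}(x,\xi)| \lesssim |\xi|$ for the other factor (Proposition \ref{b pm sign} guarantees $b^{+}$ and $b^{-}$ have opposite signs, so there is no cancellation in the product), we get
\[
|b^{\pm}(x,\xi)| \;=\; \frac{|b^{+}(x,\xi) b^{-}(x,\xi)|}{|b^{\mp}(x,\xi)|} \;\gtrsim\; \frac{|\xi|^2}{|\xi|} \;=\; |\xi|.
\]

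Both bounds together give $|b^{\pm}(x,\xi)| \approx |\xi|$, which is equivalent to the desired equivalence $|\xi|/|b^{\pm}(x,\xi)| \approx 1$. There is no real obstacle here; the only subtlety is ensuring that the implicit constants are uniform in $x$, which is the reason for invoking global boundedness of the metric coefficients (from asymptotic flatness) and the uniform ellipticity \eqref{ellipticity} rather than just pointwise statements.
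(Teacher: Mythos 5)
Your proof is correct, but it takes a genuinely different route from the paper's. The paper reduces by homogeneity to the unit cosphere and then argues by cases: in the exterior region $\{|x|>R_0\}$ it uses the smallness of $g-m$ in the $AF$ norm to show $|b^\pm(x,\xi/|\xi|)|\approx 1$ directly, while in the interior region $\{|x|\le R_0\}\times\{|\xi|=1\}$ it uses compactness together with the nonvanishing from Proposition~\ref{b pm sign} and continuity. Your argument instead proceeds globally and algebraically: the upper bound $|b^\pm|\lesssim|\xi|$ follows from uniform boundedness of the metric coefficients, and the lower bound comes from the product identity $b^+b^-=-g^{ij}\xi_i\xi_j$ (a Vieta-type relation read off from $p=-p^+p^-$) combined with uniform ellipticity \eqref{ellipticity}. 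This avoids the case split and the compactness/continuity step entirely; it also gives explicit, quantitative two-sided bounds with constants depending only on the $L^\infty$ size of $g$ and the ellipticity constant, rather than on an unquantified compactness argument in the interior. One small remark: the clause about opposite signs preventing ``cancellation in the product'' is not needed — $|b^+b^-|=|b^+|\,|b^-|$ holds unconditionally — though the sign information does confirm that $|b^+b^-|=g^{ij}\xi_i\xi_j$ with no absolute value ambiguity.
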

\begin{proof}
By homogeneity, 
$$\left|{\frac{\xi}{b^\pm(x,\xi)}}\right|=\frac{1}{\left|b^\pm\left(x,\frac{\xi}{|\xi|}\right)\right|}.$$
Write $$\left|b^\pm\left(x,\frac{\xi}{|\xi|}\right)\right|=\left|g^{0j}\frac{\xi_j}{|\xi|}\pm\sqrt{\left(g^{0j}\frac{\xi_j}{|\xi|}\right)^2+(g^{ij}-m^{ij})\frac{\xi_i}{|\xi|}\frac{\xi_j}{|\xi|}+m^{ij}\frac{\xi_i}{|\xi|}\frac{\xi_j}{|\xi|}}\right|.$$ 
Since $\norm{g-m}_{AF(|x|>R_0)}\ll 1$, asymptotic flatness guarantees that $g^{0j}$ and $g^{ij}- m^{ij}$ are small in the exterior region $\{|x|>R_0\}$. Hence,  $$\left|b^\pm\left(x,\frac{\xi}{|\xi|}\right)\right|\approx \sqrt{m^{ij}\frac{\xi_i}{|\xi|}\frac{\xi_j}{|\xi|}}= 1$$ when $|x|>R_0.$ 
 
In the interior region, we are considering $b^\pm$ on the compact set $\{|x|\leq R_0\}\times \{|\xi|=1\}.$ Since we know that $|b^\pm|>0$ for all $\xi\neq 0$ from Proposition \ref{b pm sign}, continuity guarantees the desired boundedness here.
\end{proof} In view of Proposition \ref{b scale}, it follows that $$\frac{\xi_j}{b^\pm(x,\xi)}\in S^0_{\operatorname{hom}}(T^*\R^3\setminus o),\qquad j=1,2,3,$$ where $S^0_{\operatorname{hom}}(T^*\R^3\setminus o)$ denotes the $0^{\operatorname{th}}$-order homogeneous symbol class.
\begin{remark} \label{xi flow bdd} As a consequence of the scaling, the sets  $$
    \dot{\Gamma}_{tr}^\pm=\Gamma_{tr}^\pm\cap\Phi^\pm(T^*\R^3\setminus o)\qand
    \dot{\Lambda}_{tr}^\pm=\Lambda_{tr}^\pm\cap\Phi^\pm(T^*\R^3\setminus o) 
    $$
    are invariant under the flow. Indeed, it is readily seen that the (semi) trapped nature is preserved. Further, since $b^\pm$ is constant along the flow, it follows from Proposition \ref{scaling} that $$\xi_s^\pm\left(x,\frac{\xi}{|b^\pm(x,\xi)|}\right)=\frac{1}{|b^\pm(x,\xi)|}\xi_s^\pm(x,\xi)=\frac{1}{|b^\pm(x_s^\pm(x,\xi),\xi_s^\pm(x,\xi))|}\xi_s^\pm(x,\xi).$$ 
\end{remark} 

Now, we prove a key result on non-trapped trajectories. 
\begin{Lemma} \label{nontrap lem}
If $R\geq R_0$ and $$|x^\pm_{\pm s'}( x,\xi)|\geq \max\{2R, |x|+\delta\}$$ for some $(x,\xi)\in T^*\R^3\setminus o,$ $\delta>0,$ and $s'>0,$ then it holds for all $s\geq s'$, and $$|x^\pm_{\pm s} (x,\xi)|\rightarrow \infty$$ as $s\rightarrow\infty.$ 
\end{Lemma}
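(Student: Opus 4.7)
The plan is to establish uniform strong convexity of $E(s):=|x_s^\pm|^2$ on the region $\{|x_s^\pm|\geq R\}$, then use the hypothesis that $E$ has strictly grown over $[0,s']$ to deduce $\dot E(s')\geq 0$, and finally bootstrap forward. I will carry out the argument for the $+$ flow in positive time; the $-$ flow in negative time is handled identically after considering $\tilde x(s):=x^-_{-s}$, since the second-derivative computation below is symmetric under reversal of the flow parameter.

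Step 1 (strong convexity). Differentiating twice and using Hamilton's equations $\dot{x}^+=-\nabla_\xi b^+$, $\dot{\xi}^+=\nabla_x b^+$ yields
$$\ddot E(s)=2|\nabla_\xi b^+(x_s^+,\xi_s^+)|^2+2\,x_s^+\!\cdot\ddot{x}_s^+,\qquad \ddot{x}_s^+=(\partial^2_{x\xi}b^+)(\nabla_\xi b^+)-(\partial^2_{\xi\xi}b^+)(\nabla_x b^+).$$
The pointwise asymptotic flatness bounds $|\partial^\alpha(g-m)|\lesssim\textbf{c}\inprod{x}^{-|\alpha|}$ on $\{|x|>R_0\}$, combined with the fact that $b^+$ is degree one homogeneous in $\xi$ (so $\nabla_\xi b^+$ is degree zero and $\partial^2_{\xi\xi}b^+$ is degree $-1$), give $|\nabla_\xi b^+|^2\geq 1-C\textbf{c}$, $|\nabla_x b^+|\lesssim\textbf{c}\inprod{x}^{-1}|\xi|$, and $|\ddot{x}_s^+|\lesssim\textbf{c}\inprod{x_s^+}^{-1}$ in the exterior region. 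Hence $|x_s^+\!\cdot\!\ddot{x}_s^+|\lesssim\textbf{c}$, and since $\textbf{c}$ is fixed to be as small as needed in the setup, we conclude $\ddot E\geq 1$ whenever $|x_s^+|\geq R\geq R_0$.

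Step 2 (sign of $\dot E(s')$). Let $s_*:=\max\bigl(\{0\}\cup\{s\in[0,s']:|x_s^+|\leq R\}\bigr)$. Because $|x_{s'}^+|\geq 2R>R$, we have $s_*<s'$ and $|x_s^+|>R$ on $(s_*,s']$, so $E$ is convex on $[s_*,s']$ by Step 1. Either $s_*=0$, in which case the hypothesis yields $E(s_*)=|x|^2<(|x|+\delta)^2\leq E(s')$, or $s_*>0$, in which case continuity forces $E(s_*)=R^2<4R^2\leq E(s')$. In both subcases $E(s_*)<E(s')$. If $\dot E(s')<0$ were to hold, then monotonicity of $\dot E$ (from convexity) would force $\dot E\leq\dot E(s')<0$ throughout $[s_*,s']$, giving $E(s_*)>E(s')$—a contradiction. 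Hence $\dot E(s')\geq 0$.

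Step 3 (bootstrap and divergence). Write $M=\max\{2R,|x|+\delta\}$. Suppose toward contradiction that $|x_s^+|<M$ for some $s>s'$, and let $s^\dagger$ be the first such time; by continuity $|x_{s^\dagger}^+|=M$ and $\dot E(s^\dagger)\leq 0$. Since $|x_{s'}^+|\geq M>R$, Step 1 gives $\ddot E\geq 1$ on a right-neighborhood of $s'$, which together with $\dot E(s')\geq 0$ forces $E$ strictly increasing just after $s'$, so in fact $s^\dagger>s'$ strictly. Then on $[s',s^\dagger]$ we have $\ddot E\geq 1$ and $\dot E(s')\geq 0$, yielding $\dot E(s^\dagger)\geq \dot E(s')+(s^\dagger-s')>0$, contradicting $\dot E(s^\dagger)\leq 0$. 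Thus $|x_s^+|\geq M$ for all $s\geq s'$, and integrating $\ddot E\geq 1$ on $[s',\infty)$ with $\dot E(s')\geq 0$ gives $E(s)\to\infty$. The main obstacle lies in Step 1: one must show that the error $|x_s^+\!\cdot\!\ddot{x}_s^+|$, which is only $O(|x_s^+|\cdot\textbf{c}\inprod{x_s^+}^{-1})$ a priori, is dominated by the nearly-unit principal term $|\nabla_\xi b^+|^2$; the crucial saving is that $|x|\inprod{x}^{-1}\leq 1$, converting the bound into a small universal constant times $\textbf{c}$.
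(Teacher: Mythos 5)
Your proof is correct and rests on the same core idea as the paper's: uniform strong convexity of $E(s)=|x_s^\pm|^2$ in the exterior region $\{|x|>R\}$, followed by a mean-value/bootstrap argument to deduce that the radial velocity is non-negative at $s'$ and hence that $E$ grows thereafter. The paper opts to convert to the full flow $\varphi_s$ on $T^*\R^4$ via Proposition \ref{bichar equiv} and compute $\partial_s^2|x_{\pm s}|^2$ directly from the explicit Hamiltonian system for $p$, using stationarity to make $\tau_s$ a constant $\approx|\xi_s^+|$; it then compresses Steps 2--3 into the phrase ``a typical mean value theorem argument (elementary and, thus, omitted).'' You instead stay with the projected half-wave flow $\varphi_s^\pm$ and exploit the degree-one homogeneity of $b^\pm$ in $\xi$ to normalize away the frequency dependence, which buys a cleaner form of the convexity bound ($\ddot E\gtrsim 1$ rather than $\gtrsim|\xi_s^+|^2$), and you carry out the elided MVT and continuation argument in full (the $s_*$ construction and the first-crossing $s^\dagger$). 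Both routes are valid; yours is more self-contained in not invoking Proposition \ref{bichar equiv}, at the cost of having to manage the second derivatives of $b^\pm$ directly rather than reading off explicit expressions from the Hamiltonian system for $p$. The one stylistic caveat: the bound $\ddot E\geq 1$ should be read as $\ddot E\gtrsim 1$ with the implicit constant absorbing the lower bound for $|\nabla_\xi b^\pm|^2$ on $\{|x|>R_0\}\times S^2$; this does not affect the argument.
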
 
That is, if we can get sufficiently far away from the origin and move radially outward from the initial position, then the trajectories are necessarily non-trapped. This can be proven directly, but the computations are simpler if one uses the correspondence between null bicharacteristics for $p$ and $p^\pm$.
\begin{proof}
Without loss of generality, we will work with the $x^+$ bicharacteristic ray. By Proposition \ref{bichar equiv}, it suffices to prove the result for the null bicharacteristic ray $x_{\pm s}$ with initial data $\tilde{w}$, where $\tilde{w}$ is the lift of $w$ to $T^*\R^4\setminus o$ which is consistent with the comment immediately following the aforementioned proposition (in particular, the $\tau$ component is strictly positive). For any $z\in T^*\R^4\setminus o$, we explicitly calculate that $$\frac{1}{2}\frac{\partial^2}{\partial s^2}|x_{\pm s}(z)|^2=\left|\frac{\partial}{\partial s} x_{\pm s}(z)\right|^2+x_{\pm s}(z)\cdot \frac{\partial^2}{\partial s^2} x_{\pm s}(z),$$ where
\begin{align*}
    \left|\frac{\partial}{\partial s} x_{\pm s}(z)\right|^2=4 \tau_{\pm s}^2(z)\left(\sum\limits_{k=1}^3g^{0k}(x_{\pm s}(z))\right)^2&+4\sum\limits_{k=1}^3 [(g^{ki}(x_{\pm s}(z))(\xi_{\pm s}(z))_i][(g^{kj}(x_{\pm s}(z))(\xi_{\pm s}(z))_j]\\
    &+8\sum\limits_{k=1}^3 \tau_{\pm s}(z) g^{0k}(x_{\pm s}(z))g^{kj}(x_{\pm s}(z))(\xi_j(z))_{\pm s},
\end{align*} and  
\begin{align*}
    &x_{\pm s}(z)\cdot \frac{\partial^2}{\partial s^2} x_{\pm s}(z)\\
    &=4\tau_{\pm s}(z) (x_{\pm s}(z))_k [\partial_\ell g^{0k}(x_{\pm s}(z))]\left(\tau_{\pm s}(z) g^{0\ell}(x_{\pm s}(z))+g^{\ell j}(x_{\pm s}(z))(\xi_{\pm s}(z))_j\right)\\
   &\qquad +4(x_{\pm s}(z))_j[\partial_\ell g^{kj}(x_{\pm s}(z))]\left(\tau_{\pm s}(z) g^{0\ell}(x_{\pm s}(z))+g^{\ell j}(x_{\pm s}(z))(\xi_{\pm s}(z))_j\right)(\xi_{\pm s}(z))_j\\
    &\qquad-2(x_{\pm s}(z))_k g^{kj}(x_{\pm s}(z))\left(2\tau_{\pm s}(z)\partial_j g^{0i}(x_{\pm s}(z))(\xi_{\pm s}(z))_i+\partial_j g^{i\ell}(x_{\pm s}(z))(\xi_{\pm s}(z))_i(\xi_{\pm s}(z))_{\ell}\right).
\end{align*} Since $\tau_s$ is constant for stationary $P$, it follows that 
$$\tau_{\pm s}(z)=\tau_0=b^+ (\Pi_{x,\xi}(z))=b^\pm\left(x^+_{\pm s}(\Pi_{x,\xi}(z)),\xi^+_{\pm s}(\Pi_{x,\xi}(z))\right)\approx  |\xi^+_{\pm s}(\Pi_{x,\xi}(z))|,$$
and so $$\frac{\partial^2}{\partial s^2}\left| x_{\pm s}(z)\right|^2\gtrsim |\xi^+_{\pm s}(\Pi_{x,\xi}(z))|^2\left(1-\norm{g-m}_{AF_{>R}}\right)$$ provided that $|x_{\pm s}(z)|>R$. In such a case, we have that $\norm{g-m}_{AF_{>R}}\ll 1,$ and thus $$\frac{\partial^2}{\partial s^2}|x_{\pm s}(z)|^2>0.$$ Using the condition that $|x^\pm_{\pm s'}(x,\xi)|\geq \max\{2R, |x|+\delta$\} 
in a typical mean value theorem argument (elementary and, thus, omitted) establishes the existence of an $s''\in [0,s']$ such that 

    $$|x_{ \pm s''}(\tilde{w})|^2> R^2 \qand
    \left(\frac{\partial}{\partial s}|x_{\pm s}(\tilde{w})|^2\right)\Big|_{s=s''}>0.$$
All together, we have that $|x_{\pm s}(\tilde{w})|^2$ has positive derivative at $s=s''$, and its derivative is increasing for all $s\geq s''$. In particular, $|x_{\pm s}(\tilde{w})|^2$ is increasing for all $s\geq s'',$ which implies the result.
\end{proof}

As a consequence, we can use the trapped and non-trapped sets to partition phase space.
\begin{prop} \label{decomp} $\ $ 
\begin{enumerate}[label=\normalfont(\alph*)]
    \item  We can partition $T^*\R^3\setminus o$ as \begin{align*} 
        T^*\R^3\setminus o&=\Gamma_{tr}^\pm\sqcup\Gamma_\infty^\pm=\Lambda_{tr}^\pm\sqcup\Lambda_\infty^\pm,\\
        T^*\R^3\setminus o&=\Gamma_{tr}^\pm\cup\Lambda_{tr}^\pm\cup  \Omega_{\infty}^\pm.
    \end{align*} 
    \item 
    $\Gamma^\pm_\infty, \Lambda^\pm_\infty, \Omega^\pm_{\infty}$ are open in $T^*\R^3\setminus o$, and $\Gamma_{tr}^\pm,\Lambda_{tr}^\pm, \Omega^\pm_{tr}$ are closed.
    \item If $K\subset \Omega_\infty^\pm$ is compact, then for every $R\geq R_0$, there exists $T'\geq 0$ so that $$|x^\pm_s (v)|> R$$ for every $|s|\geq T'$ and ${v\in K}.$ Also, $$\bigcup\limits_{s\in\R}\varphi_{s}^\pm (K)$$ is closed in $T^*\R^3\setminus o.$
\end{enumerate}
\end{prop}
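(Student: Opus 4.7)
The plan is to let Lemma \ref{nontrap lem} do the heavy lifting in all three parts: it provides a quantitative escape mechanism (once a trajectory is far enough from the origin and has moved radially outward, it must escape to infinity), which I will combine with smooth dependence of the flow on initial data.

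For (a), the disjointness $\Gamma_{tr}^\pm \cap \Gamma_\infty^\pm = \emptyset$ is tautological. For exhaustion, if $v = (x,\xi) \notin \Gamma_{tr}^\pm$ then $\sup_{s\geq 0}|x^\pm_s(v)| = \infty$, so I can pick an $s' > 0$ with $|x^\pm_{\pm s'}(v)| > \max\{2R, |x|+1\}$ for $R = \max\{R_0,|x|\}$; Lemma \ref{nontrap lem} then forces $|x^\pm_s(v)| \to \infty$, i.e.\ $v\in\Gamma_\infty^\pm$. The same reasoning in reversed time gives $\Lambda_{tr}^\pm\sqcup\Lambda_\infty^\pm = T^*\R^3\setminus o$, and the three-piece decomposition $T^*\R^3\setminus o = \Gamma_{tr}^\pm\cup\Lambda_{tr}^\pm\cup\Omega_\infty^\pm$ is then pure set algebra.

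For (b), it suffices to show $\Gamma_\infty^\pm$ is open; $\Lambda_\infty^\pm$ is symmetric, $\Omega_\infty^\pm$ is their intersection, and the trapped sets are complements by (a). Given $v=(x,\xi)\in\Gamma_\infty^\pm$, I will pick $R\geq R_0$ with $R>|x|$ and a time $s'$ with $|x^\pm_{\pm s'}(v)| > 3R$; by continuous dependence of the Hamiltonian flow on initial data, some neighborhood $U$ of $v$ satisfies both $|x^\pm_{\pm s'}(w)| > 2R$ and $|\pi_x w| < R$ for all $w\in U$. Lemma \ref{nontrap lem} applied with the common constants $R$ and $\delta = R$ then gives $U\subset\Gamma_\infty^\pm$.

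The main obstacle is part (c), since it requires upgrading the pointwise escape of Lemma \ref{nontrap lem} to one that is uniform over the compact set $K$; the per-point threshold $\max\{2R,|x|+\delta\}$ depends on the initial point, so I will exploit compactness to make a single choice that works. Fix $R\geq R_0$, set $R_* = 3R + 3\sup_{v\in K}|\pi_x v|$ (finite by compactness), and for each $v\in K$ choose $T_v>0$ with $|x^\pm_{\pm T_v}(v)| > 2R_*$, using that $|x^\pm_s(v)|\to\infty$ as $|s|\to\infty$. By continuity in initial data, a neighborhood $U_v$ inherits $|x^\pm_{\pm T_v}(w)| > R_*$ for all $w\in U_v$, so Lemma \ref{nontrap lem} yields $|x^\pm_s(w)| > R$ for all $|s|\geq T_v$ and $w\in U_v$. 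A finite subcover and the maximum of the $T_{v_i}$ give the uniform $T'$. Closedness of $\bigcup_{s\in\R}\varphi_s^\pm(K)$ then follows from this: if $w_n = \varphi_{s_n}^\pm(v_n) \to w$, compactness extracts $v_n\to v\in K$, and if $|s_n|\to\infty$ along a subsequence, the uniform escape just proved forces the $x$-component of $w_n$ out of every ball, contradicting convergence; hence $s_n$ is bounded, and passing to a further subsequence $s_n\to s$ gives $w = \varphi_s^\pm(v)$ by continuity.
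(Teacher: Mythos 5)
Your proof is correct and implements exactly what the paper defers to: the paper omits the argument, saying only that it ``follows directly from using Lemma \ref{nontrap lem} and continuity of the flow in both the evolution parameter and the data,'' citing Proposition 8.3 of Bouclet--Royer, and your proposal carries out precisely that program. One small point to tighten in (c): when invoking Lemma \ref{nontrap lem} on $U_v$ with threshold $R_*$, you also need $|\pi_x w|$ to stay bounded for $w\in U_v$ (say by $\sup_{v\in K}|\pi_x v|+R$) so that $R_*$ dominates $\max\{2R,\,|\pi_x w|+\delta\}$; this is immediate after shrinking $U_v$, but it is a hypothesis of the escape lemma and is worth stating explicitly.
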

We omit the proof of this result, as it follows directly from using Lemma \ref{nontrap lem} and continuity of the flow in both the evolution parameter and the data, as in \cites{BR14} (see Proposition 8.3 in the aforementioned work).

Finally, we show that if one assumes geometric control for bounded bicharacteristic rays, then it holds for semi-bounded bicharacteristic rays (that is, those which are bounded forward or backward in time). Although the proof is similar to that given in \cites{BR14}, we include in here due to its importance in our work and the seemingly increased complexity of our sets (at least notationally).

\begin{prop}\label{semi GCC} Assume that the geometric control condition (\ref{GCC}) holds.  If $w\in \dot{\Gamma}_{tr}^\pm$,  then there exists $s_\pm\geq 0$ so that $a\left(x_{s_\pm}^\pm(w)\right)>0.$ The same is true for $w\in\dot{\Lambda}_{tr}^\pm,$ but with $s_\pm\leq 0.$
\end{prop}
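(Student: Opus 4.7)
The plan is to argue by contradiction using the compactness of the trajectory (in both $x$ and $\xi$) furnished by forward trapping plus the scaling from Remark \ref{xi flow bdd}, extract an accumulation point of the trajectory as $s \to \infty$, show this accumulation point lies in the fully trapped set $\Omega_{tr}^\pm$, and then invoke the hypothesized geometric control condition together with continuity to reach a contradiction.

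More concretely, I would focus on the forward case: let $w \in \dot{\Gamma}_{tr}^\pm$ and suppose for contradiction that $a(x_s^\pm(w)) = 0$ for every $s \geq 0$. Since $w \in \dot{\Gamma}_{tr}^\pm$, the $x$-components $\{x_s^\pm(w): s \geq 0\}$ are bounded by some $M > 0$; and because $\dot{\Gamma}_{tr}^\pm$ is invariant under the flow with $\xi_s^\pm$ living in the region $|\xi/b^\pm(\cdot,\cdot)| \approx 1$ (Remark \ref{xi flow bdd}), the $\xi$-components are also bounded. Hence the forward orbit is precompact in $T^*\R^3 \setminus o$. Pick any sequence $s_n \to \infty$, pass to a subsequence, and let $w^* \in T^*\R^3 \setminus o$ be a limit of $\varphi_{s_n}^\pm(w)$.

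The core step is to show $w^* \in \Omega_{tr}^\pm$. For each fixed $s \geq 0$, the group law and continuity of the flow in data give $x_{\pm s}^\pm(w^*) = \lim_n x_{s_n \pm s}^\pm(w)$. For large $n$ we have $s_n - s \geq 0$, so all such points remain bounded by $M$; taking the limit yields $|x_{-s}^\pm(w^*)| \leq M$, so $w^*$ is backward trapped, and the same type of bound handles forward trapping. Thus $w^* \in \Omega_{tr}^\pm$. Now the geometric control condition (\ref{GCC}), rephrased through Proposition \ref{bichar equiv} for $\varphi^\pm$, supplies some $s^* \in \R$ with $a(x_{s^*}^\pm(w^*)) > 0$. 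By continuity of $a$ and of the flow, $a(x_{s_n + s^*}^\pm(w)) \to a(x_{s^*}^\pm(w^*)) > 0$, and since $s_n + s^* \geq 0$ for large $n$, this contradicts the standing assumption that $a$ vanishes along the forward orbit of $w$. The backward case $w \in \dot{\Lambda}_{tr}^\pm$ follows verbatim after reversing the sign of the flow parameter.

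The main obstacle, to the extent there is one, is a bookkeeping point rather than a conceptual one: I need to make sure that the limit point $w^*$ actually lies in $T^*\R^3 \setminus o$ (i.e., has nonzero $\xi$-component) so that invocation of $\Omega_{tr}^\pm$ is legitimate. This is precisely where the restriction to $\dot{\Gamma}_{tr}^\pm = \Gamma_{tr}^\pm \cap \Phi^\pm(T^*\R^3 \setminus o)$ is essential: the normalization $|\xi^\pm_s(w)/b^\pm(x^\pm_s(w),\xi^\pm_s(w))| \approx 1$ is preserved along the flow, and combined with Proposition \ref{b scale} it keeps $|\xi^\pm_s(w)|$ bounded away from both $0$ and $\infty$, so the limit $\xi^*$ is nonzero. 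With that observation the argument is a clean Bolzano--Weierstrass plus continuity argument, closely paralleling Proposition 8.4 of \cites{BR14}.
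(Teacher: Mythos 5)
Your argument is correct and is essentially identical to the paper's proof: both extract an accumulation point $w^*$ (the paper's $w'$) of the forward orbit $\varphi^\pm_{s_n}(w)$ using the uniform bounds on $x^\pm_s$ and the $|\xi^\pm_s|\approx 1$ normalization from Remark~\ref{xi flow bdd}, show via the group law and continuity that $w^*$ is fully trapped, apply geometric control at $w^*$, and then push the positivity of $a$ back along the orbit of $w$ at a non-negative time by continuity. The only cosmetic difference is that you wrap this in a contradiction, whereas the paper argues directly; your reasoning already yields the positive conclusion without the contradiction scaffolding.
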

\begin{proof} We will only demonstrate this for $\dot{\Gamma}_{tr}^+$, as the work to establish the remaining cases is similar. If $w\in \dot{\Gamma}_{tr}^+,$ then $$\alpha:=\sup_{s\geq 0}|x^+_s(w)|<\infty.$$ According to Remark \ref{xi flow bdd}, $|\xi_s^+(w)|\approx 1$ for all $s\in\R$. Thus, $$\sup_{s\geq 0} |\varphi_s^+(w)|<\infty.$$

Then, there exists a point $w'\in T^*\R^3$ and a sequence $(s_n)$ of non-negative real numbers such that $\varphi^+_{s_n}(w)\rightarrow w'$ as $s_n\rightarrow \infty.$ For any $s\in\R,$ the group law for the flow tells us that $\varphi^+_{s+s_n}(w)=\varphi_s^+(\varphi^+_{s_n}(w)),$ and so 
$$x^+_{s+s_n}(w)=\Pi_x\circ \varphi_s^+(\varphi^+_{s_n}(w))\rightarrow x_s^+(w') \text{ as }s_n\rightarrow\infty.$$ Since $s+s_n\geq 0$ for large enough $n$, it follows that  $|x_s^+(w')|\leq \alpha$  for all $s\in\R.$ By (\ref{GCC}), there exists $s'\in\R$ for which $ a(x^+_{s'}(w'))>0.$ Recall that  $x_{s'+s_n}^+(w)\rightarrow x^+_{s'}(w')$ as $n\rightarrow\infty$. Since $a$ is continuous and $s'+s_n\geq 0$ for $n$ large enough, we conclude that  $a\left(x_{s'+s_N}^+(w)\right)>0$ for some large $N$.
\end{proof}

\subsection{Escape Function Construction}
\label{escape section}
We will construct our symbols in multiple steps:
\begin{enumerate}
    \item \textbf{On the characteristic set.} Since we are utilizing the half-wave decomposition, working on the characteristic set amounts to working on each individual light cone, then combining together. There are three regions of interest, two sub-regions of the \textit{interior region} $\{|x|\leq R\}$ and the \textit{exterior region} $\{|x|> R\}$. Here, $R\geq R_0$. 
    \begin{enumerate}
        \item \textbf{Interior, semi-bounded null bicharacteristics.} As opposed to working with the trapped and non-trapped sets, we will first work with the semi-bounded null bicharacteristics with initial data living in the interior region $\{|x|\leq R\}$. Working with the trapped and non-trapped sets can be difficult, since one can have non-trapped trajectories which are bounded forward or backward in time (but not both). Heuristically, these trajectories constitute the boundary of the non-trapped set. Instead, we will explicitly work with trajectories which are bounded forward or backward in time. This is where geometric control is used. This step is inspired by the work in \cites{BR14}.
        \item \textbf{The remainder of the interior region.} Since there is no trapping here, we construct a symbol similar to the one constructed in \cites{BR14}, \cites{Doi96}, and \cites{mst20}. We will need to make an appropriate modification to avoid trapped trajectories while working with the half-wave symbols.
        \item \textbf{The exterior region.} As a consequence of asymptotic flatness, there are no trapped trajectories here. Hence, this follows from a similar multiplier to that used to prove local energy decay for the flat wave equation, although the multiplier must be appropriately adapted to the geometry. Here, we are motivated by prior work in \cites{MMT08} and \cites{mst20}.
    \end{enumerate}
    \item \textbf{On the elliptic set.} Here, we construct a correction term. That is, we will construct a lower-order symbol which provides no contribution on the characteristic set and provides positivity off of it. This is based on the work in \cites{mst20}.
\end{enumerate}

We will break this construction up into a sequence of lemmas, starting with (1a). While our construction follows that of \cites{BR14}, we reason differently. Their argument utilizes semiclassical rescaling, which provides compactness for their interior, semi-trapped set. Since we are sticking with the microlocal framework, we instead utilize homogeneity arguments to obtain this compactness. This is one of the reasons to work with the half-wave decomposition (the other being related to step (1b), which we will outline once we get there).

    With this in mind, we will utilize the sets 
\begin{align*}
    \Omega_{R}^\pm &:=\left(\Gamma_{tr}^\pm\cup\Lambda_{tr}^\pm\right) \cap\{|x|\leq R\},\\
    \dot{\Omega}_R^\pm&:=\Omega_R^\pm\cap\Phi^\pm(T^*\R^3\setminus o).
\end{align*}
As a consequence of Proposition \ref{b scale} and Proposition \ref{decomp}(b), the latter set is compact. 

\begin{Lemma}[Semi-bounded Escape Function Construction]\label{semibdd}
There exist $q^{\pm}\in C^\infty(T^*\R^3\setminus o)$, an open set $V_R^{\pm}\supset \Omega_{R}^\pm$, and $C^\pm\in\R_+$ so that $$H_{p^\pm}q^\pm+C^\pm a\gtrsim_R \mathbbm{1}_{V_R^\pm}.$$ Further, $q^\pm=q_1^\pm\circ \Phi^\pm,$ where $q_1^\pm\in C_c^\infty(T^*\R^3\setminus o)$.
\end{Lemma}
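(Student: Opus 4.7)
The plan is to exploit that trapped trajectories in $\Omega_R^\pm$ are invariant under the $\xi$-scaling and to build $q_1^\pm$ on the compact normalized set $\dot{\Omega}_R^\pm$, then pull back via $\Phi^\pm$. The key geometric fact enabling the pullback is that $\Phi^\pm$ conjugates the projected Hamiltonian flow: combining Proposition \ref{scaling} with Remark \ref{xi flow bdd} gives $\varphi_s^\pm \circ \Phi^\pm = \Phi^\pm \circ \varphi_s^\pm$, so for any smooth $q_1^\pm$ we have $H_{p^\pm}(q_1^\pm \circ \Phi^\pm) = (H_{p^\pm} q_1^\pm) \circ \Phi^\pm$. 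Since $a$ depends only on $x$ and $\Phi^\pm$ fixes $x$, any estimate of the form $H_{p^\pm} q_1^\pm + C^\pm a \gtrsim \mathbbm{1}_{\tilde{V}_R^\pm}$ established on the image of $\Phi^\pm$ transfers to $H_{p^\pm} q^\pm + C^\pm a \gtrsim \mathbbm{1}_{V_R^\pm}$ on $T^*\R^3\setminus o$ with $V_R^\pm = (\Phi^\pm)^{-1}(\tilde{V}_R^\pm) \supset \Omega_R^\pm$.

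For the local piece, fix $w \in \dot{\Omega}_R^\pm$. Since $\dot{\Omega}_R^\pm \subset \dot{\Gamma}_{tr}^\pm \cup \dot{\Lambda}_{tr}^\pm$, Proposition \ref{semi GCC} supplies a time $s_w$ (non-negative in the $\dot{\Gamma}$ case, non-positive in the $\dot{\Lambda}$ case) with $a(x_{s_w}^\pm(w)) > 0$. By joint continuity of the flow and continuity of $a$, choose a compactly-contained open neighborhood $U_w$ of $w$ in $T^*\R^3\setminus o$ and $\delta_w > 0$ so that $a(x_{s_w}^\pm(v)) \geq \delta_w$ for all $v \in U_w$. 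Pick $\chi_w \in C_c^\infty(T^*\R^3\setminus o)$ supported in $U_w$ with $\chi_w \equiv 1$ on a smaller neighborhood $U_w'$ of $w$, and set
$$\tilde{q}_w(v) = \int_0^{s_w} \chi_w\big(\varphi_{-s}^\pm(v)\big)\, ds.$$
A direct chain-rule computation yields
$$H_{p^\pm} \tilde{q}_w(v) = \chi_w(v) - \chi_w\big(\varphi_{-s_w}^\pm(v)\big),$$
and the error term is supported on $\varphi_{s_w}^\pm(\supp \chi_w)$, where $a(x(v)) \geq \delta_w$ by construction. Therefore $\chi_w \circ \varphi_{-s_w}^\pm \leq (\|\chi_w\|_\infty/\delta_w)\, a$, and summing gives the local estimate $H_{p^\pm} \tilde{q}_w + (\|\chi_w\|_\infty/\delta_w)\, a \geq \mathbbm{1}_{U_w'}$.

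Compactness of $\dot{\Omega}_R^\pm$ (bounded in $x$ by $R$, bounded in $\xi$ via Proposition \ref{b scale}, and closed by Proposition \ref{decomp}(b)) produces a finite subcover $\{U_{w_j}'\}_{j=1}^N$. Setting $q_1^\pm = \sum_j \tilde{q}_{w_j}$, $C^\pm = \sum_j \|\chi_{w_j}\|_\infty/\delta_{w_j}$, and $\tilde{V}_R^\pm = \bigcup_j U_{w_j}'$, the local estimates sum to $H_{p^\pm} q_1^\pm + C^\pm a \geq \mathbbm{1}_{\tilde{V}_R^\pm}$. That $q_1^\pm \in C_c^\infty(T^*\R^3\setminus o)$ follows from three facts on the image of $\Phi^\pm$: $|\xi_s^\pm|$ remains bounded along the flow by Remark \ref{xi flow bdd}; the $x$-drift $|\dot{x}_s^\pm| = |\nabla_\xi b^\pm|$ is bounded by zeroth-order homogeneity; and the times $|s_{w_j}|$ are finite in number. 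Finally taking $q^\pm = q_1^\pm \circ \Phi^\pm$ and applying the conjugation identity closes the argument.

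The main obstacle is the split between the forward-bounded ($\dot{\Gamma}_{tr}^\pm$) and backward-bounded ($\dot{\Lambda}_{tr}^\pm$) cases, where $s_w$ has opposite signs. The integral defining $\tilde{q}_w$ and the ensuing chain-rule identity for $H_{p^\pm} \tilde{q}_w$ are insensitive to this sign, however, so no case splitting is needed at the analytic level once Proposition \ref{semi GCC} is in hand. A secondary technical point is that each local cutoff $\chi_w$ must be taken with small enough support for the pointwise bound $\chi_w \circ \varphi_{-s_w}^\pm \lesssim a$ to hold; continuity of $a$ and of the flow make this automatic.
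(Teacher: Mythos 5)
Your proposal matches the paper's proof in essentially every structural detail: you use Proposition \ref{semi GCC} to get a damping-reaching time for each normalized semi-trapped point, build the flow-integrated escape function $\tilde q_w = \int_0^{s_w}\chi_w\circ\varphi_{-s}^\pm\,ds$, observe $H_{p^\pm}\tilde q_w=\chi_w-\chi_w\circ\varphi_{-s_w}^\pm$ with the bad term supported where $a$ is bounded below, absorb it with $C_w a$, sum over a finite subcover using compactness of $\dot\Omega_R^\pm$, and then pull back via $\Phi^\pm$ using that $a$ depends only on $x$. The one stylistic difference is that you state the pullback step as the conjugation identity $\Phi^\pm\circ\varphi_s^\pm=\varphi_s^\pm\circ\Phi^\pm$ (which follows directly from Proposition \ref{scaling} and the constancy of $b^\pm$ along the flow) and deduce $H_{p^\pm}(q_1^\pm\circ\Phi^\pm)=(H_{p^\pm}q_1^\pm)\circ\Phi^\pm$; the paper instead carries out the equivalent chain-rule calculation explicitly. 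Both are correct; your formulation is slightly cleaner. Your justification of $q_1^\pm\in C_c^\infty$ via boundedness of $|\xi_s^\pm|$ and $|\dot x_s^\pm|$ is a bit more indirect than the paper's observation that $\supp\tilde q_w\subset\varphi_{[0,s_w]}^\pm(\overline{U_w})$, which is compact as the continuous image of a compact set, but the conclusion is the same.
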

Here, $\Phi^\pm\in S^0_{\operatorname{hom}}(T^*\R^3\setminus o)$ is the scaling function introduced in Section \ref{flow results section}.
The fact that we omit the zero section is unavoidable, but it is non-problematic; we will introduce high-frequency cutoffs to our symbols later on which allow for smooth extensions to all of phase space.  
\begin{proof}
 We will first construct a symbol $q_1^\pm$ and an open set $\dot{V}_R^\pm\supset \dot{\Omega}_R^\pm$ such that $$H_{p^\pm}q_1^\pm+C^\pm a\gtrsim_R \mathbbm{1}_{\dot{V}_R^\pm}.$$ To that end, let $w^\pm\in \dot{\Omega}_R^\pm.$ By Proposition \ref{semi GCC}, there exists $s_{w^\pm}\in\R$ for which $a(x^\pm_{s_{w^\pm}}(w^\pm))>0$. Say that $2\alpha_{w^\pm}:=a(x^\pm_{s_{w^\pm}}(w^\pm)).$ By the continuity of the flow in the initial data, there exists a neighborhood $U_{w^\pm}$ of $w^\pm$ so that $a(x^\pm_{s_{w^\pm}}(z))>\alpha_{w^\pm}$ for all $z\in U_{w^\pm}$. Select a smooth cutoff $\chi_{w^\pm}\in C_c^\infty(T^*\R^3)$ so that $\supp\ \chi_{w^\pm}\subset U_{w^\pm}$ and $\chi_{w^\pm}\equiv 1$ on a smaller neighborhood $V_{w^\pm}$ of ${w^\pm}$. Now, we define a symbol on $T^*\R^3\setminus o$ given by 
$$q_{w^\pm}(x,\xi)=\int\limits_0^{s_{w^\pm}}\left(\chi_{w^\pm}\circ \varphi^\pm_{-s}\right)(x,\xi)\, ds.$$

Such a symbol is readily seen to be well-defined, and it is smooth by the aforementioned smooth flow dependence on data. Next, we demonstrate its symbolic nature. By continuity of the flow,\newline\noindent $\varphi^\pm_{[0,s_{w^\pm}]}(\overline{U_{w^\pm}}):=\varphi^\pm([0,s_{w^\pm}]\times \overline{U_{w^\pm}})$ is compact. 
If $(x,\xi)\notin \varphi^\pm_{[0,s_{w^\pm}]}(\overline{U_{w^\pm}}),$ then $(x,\xi)\notin \varphi^\pm_s(\overline{U_{w^\pm}})$ for any \newline\noindent $s\in [0,s_w]$. Then, $\varphi_{-s}^\pm(x,\xi)\notin \overline{U_{w^\pm}}$ for any $s\in [0,s_{w^\pm}]$, implying that $q_{w^\pm}(x,\xi)=0.$ Hence, \newline\noindent $q_{w^\pm}\in C_c^\infty(T^*\R^3\setminus o).$ 

Applying the Hamiltonian vector field $H_{p^\pm}$ gives us
$$H_{p^\pm}q_{w^\pm}=\int\limits_0^{s_{w^\pm}} H_{p^\pm} (\chi_{w^\pm}\circ \varphi_{-s}^\pm)\, ds=-\int\limits_0^{s_{w^\pm}} \partial_s\left( \chi_{w^\pm}\circ \varphi_{-s}^\pm\right)\, ds=\chi_{w^\pm}-\chi_{w^\pm}\circ\varphi_{-s_{w^\pm}}^\pm.$$ Notice that the term $-\chi_{w^\pm}\circ\varphi_{-s_{w^\pm}}^\pm$ is non-positive and that  $$\supp\left(\chi_{w^\pm}\circ \varphi_{-s_{w^\pm}}^\pm\right)\subset \left\lbrace v: \varphi_{-s_{w^\pm}}^\pm(v)\in U_{w^\pm}\right\rbrace=\left\lbrace v: v\in \varphi_{s_{w^\pm}}^\pm(U_{w^\pm})\right\rbrace\subset \{x: a(x)>\alpha_{w^\pm}\}.$$ 
Using this support property, we can use the damping to absorb the poorly-signed term and obtain non-negativity of $H_{p^\pm}q_{w^\pm}$. Indeed, if we call $C_{w^\pm}=2(\alpha_{w^\pm})^{-1}$, then we have $$\chi_{w^\pm}\circ\varphi_{-s_{w^\pm}}+C_{w^\pm}a(x)\geq 0.$$  Thus, 
$$H_{p^{\pm}}q_{w^\pm}+C_{w^\pm} a\gtrsim\mathbbm{1}_{V_{w^\pm}}.$$ 

Since $\dot{\Omega}_R^\pm$ is compact, we can reduce the open cover $\{V_{w^\pm}\}_{{w^\pm}\in \dot{\Omega}_R^\pm}$ to a finite subcover  $\{V_{w_j^\pm}\}_{j=1}^m,$ with each $w_j^\pm\in \dot{\Omega}_R^\pm.$ Call $$\dot{V}_R^\pm=\bigcup\limits_{j=1}^m V_{w^\pm_j},\qquad q^\pm_1=\sum\limits_{j=1}^m q_{w_j^\pm},\qquad \text{and}\qquad {C}^\pm=\sum\limits_{j=1}^mC_{w_j^\pm}.$$
 This provides us with a symbol $q_1^\pm\in C_c^\infty(T^*\R^3\setminus o)$ so that $$H_{p^\pm}q_1^\pm + {C}^\pm a\gtrsim\mathbbm{1}_{\dot{V}_R^\pm},\qquad \dot{V}_R^\pm\supset \dot{\Omega}_R^\pm.$$ 
 
 Finally, we will extend the above estimate from an indicator on $\dot{V}_R^\pm$ to an indicator on a neighborhood $V_R^\pm\supset\Omega_R^\pm$. Consider the function $q^{\pm}:T^*\R^3\setminus o\rightarrow\R$ given by $$q^\pm=q_1^\pm\circ\Phi^\pm.$$ Since geometric control is invariant under $\Phi^\pm,$ we can see that $q^\pm\neq 0.$ 
 By definition, $$H_{p^\pm} q^\pm\big|_{(x,\xi)}=\frac{d}{ds}\left(q^\pm(x_s^\pm,\xi_s^\pm)\right)\big|_{s=0}.$$ Since $b^\pm$ is a constant of motion for the Hamiltonian system generated by $p^\pm$, it follows that $$(\nabla_x b^\pm)(x_s^\pm,\xi_s^\pm)\dot{x}_s^\pm+(\nabla_{\xi} b^\pm)(x_s^\pm,\xi_s^\pm)\dot{\xi}_s^\pm=0$$ for all $s$. Using this, we calculate that 
 \begin{align*}
     \frac{d}{ds}\left(q^\pm(x_s^\pm,\xi_s^\pm)\right) &= \frac{d}{ds}\left(q_1^\pm\left(x_s^\pm,\frac{\xi_s^\pm}{|b^\pm(x_s^\pm,\xi_s^\pm)|}\right)\right)\\
     &=(\nabla_x q_1^\pm)\left(x_s^\pm,\frac{\xi_s^\pm}{|b^\pm(x_s^\pm,\xi_s^\pm)|}\right)\cdot \left(\dot{x}_s^\pm\right)\\
     &\hspace*{-1in}+(\nabla_{\xi}q_1^\pm)\left(x_s^\pm,\frac{\xi_s^\pm}{|b^\pm(x_s^\pm,\xi_s^\pm)|}\right)\cdot \frac{(|b^\pm(x_s^\pm,\xi_s^\pm)|\dot{\xi}_s^\pm-\xi_s^\pm\left((\nabla_x b^\pm)(x_s^\pm,\xi_s^\pm)\dot{x}_s^\pm+(\nabla_{\xi} b^\pm)(x_s^\pm,\xi_s^\pm)\dot{\xi}_s^\pm\right)}{|b^\pm(x_s^\pm,\xi_s^\pm)|^2}\displaybreak\\ 
     &=(\nabla_x q_1^\pm)\left(x_s^\pm,\frac{\xi_s^\pm}{|b^\pm(x_s^\pm,\xi_s^\pm)|}\right)\cdot (\nabla_{\xi} p^\pm)(x_s^\pm,\xi_s^\pm)\\
    & \qquad-\frac{1}{|b^\pm(x_s^\pm,\xi_s^\pm)|}(\nabla_{\xi}q_1^\pm)\left(x_s^\pm,\frac{\xi_s^\pm}{|b^\pm(x_s^\pm,\xi_s^\pm)|}\right)\cdot (\nabla_x p^\pm)(x_s^\pm,\xi_s^\pm)\\
      &=(\nabla_x q_1^\pm)\left(x_s^\pm,\frac{\xi_s^\pm}{|b^\pm(x_s^\pm,\xi_s^\pm)|}\right)\cdot (\nabla_{\xi} p^\pm)\left(x_s^\pm,\frac{\xi_s^\pm}{|b^\pm(x_s^\pm,\xi_s^\pm)|}\right)\\
     &\qquad-(\nabla_{\xi}q_1^\pm)\left(x_s^\pm,\frac{\xi_s^\pm}{|b^\pm(x_s^\pm,\xi_s^\pm)|}\right)\cdot (\nabla_x p^\pm)\left(x_s^\pm,\frac{\xi_s^\pm}{|b^\pm(x_s^\pm,\xi_s^\pm)|}\right)\\
     &=H_{p^\pm} q_1^\pm\big|_{\left(x_s^\pm,\frac{\xi_s^\pm}{|b^\pm(x_s^\pm,\xi_s^\pm)|}\right)},
 \end{align*}
 where we have used homogeneity to obtain that  $$ (\nabla_\xi p^\pm)\left(x_s^\pm,\xi_s^\pm\right)=(\nabla_\xi p^\pm)\left(x_s^\pm,\frac{\xi_s^\pm}{|b^\pm(x_s^\pm,\xi_s^\pm)|}\right)$$ and $$ \frac{1}{|b^\pm(x_s^\pm,\xi_s^\pm)|}(\nabla_x p^\pm)\left(x_s^\pm,\xi_s^\pm\right)=(\nabla_x p^\pm)\left(x_s^\pm,\frac{\xi_s^\pm}{|b^\pm(x_s^\pm,\xi_s^\pm)|}\right).$$ If we define $V_R^\pm=(\Phi^\pm)^{-1}\left(\dot{V}_R^\pm\right),$ then we have an open neighborhood of $\Omega_R^\pm$ such that $$H_{p^\pm} q^\pm\big|_{(x,\xi)}+C^\pm a(x)=H_{p^\pm} q_1^\pm \big|_{\left(x,\frac{\xi}{|b^\pm(x,\xi)|}\right)}+C^\pm a(x)\gtrsim\left(\mathbbm{1}_{\dot{V}_R^\pm}\circ \Phi^\pm \right)(x,\xi)\geq \mathbbm{1}_{V_R^\pm},$$ since $\Phi^\pm(V_R^\pm)\subset \dot{V}_R^\pm$. 
\end{proof}
Now that we have completed step (1a), we move on to parts (1b) and (1c). Step (1b) pertains to non-trapped null bicharacteristics in the interior region. The symbol that we produce follows the construction appearing in \cites{Doi96} and utilized in many other works, such as \cites{BR14} and \cites{mst20}. Like in \cites{mst20}, we perform a factoring argument. The reason for studying the half-wave decomposition is due to the presence of a cutoff needed to make our constructed ``symbol'' genuinely symbolic. In the unfactored setting, cross terms in the metric arise when differentiating the cutoff in the computation of the Poisson bracket, generating an error term that is difficult to control. In the factored setting, this error can be handled straightforwardly. 

Step (1c) takes place in the exterior region. This is of little concern, as we possess robust exterior estimates. 
We utilize this symbol as a means of bootstrapping the aforementioned error term, which will be compactly-supported in the region where the exterior symbol has strictly positive Poisson bracket with $p^\pm$. 

To these ends, we will analyze both half-waves simultaneously (as in Lemma \ref{semibdd}). 
While this portion of the argument follows the one given in \cites{mst20}, it does require a modification; the escape function on interior, non-trapped null bicharacteristics needs an appropriate adjustment to ensure that it avoids trapped trajectories. We start with a proposition where we construct a function that will be used for the previously-described error absorption. The construction of this function comes from e.g. \cites{mst20}, \cites{Tat08}.

\begin{prop} \label{bootstrap fcn} Let $\sigma>0.$ Then, there exists $f\in C^\infty$ satisfying $f(r)\approx_\sigma 1$ when $r> R_0$ and $f'(r)\approx \sigma c_j 2^{-j}f(r)$ when $r\approx 2^j>R_0$.
\end{prop}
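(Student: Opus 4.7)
The plan is to reformulate the requirement $f' \approx \sigma c_j 2^{-j} f$ multiplicatively and solve for the logarithmic derivative. If we set $h(r) := (\log f)'(r) = f'(r)/f(r)$, then the requirement becomes $h(r) \approx \sigma c_j 2^{-j}$ on $r \approx 2^j$, and $f = \exp(\int_{R_0}^r h)$ automatically recovers $f$. The task then reduces to building a smooth $h$ with the correct dyadic size, and verifying that the total integral $\int_{R_0}^\infty h$ is controlled so that $f$ stays of size $\approx_\sigma 1$.

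Concretely, first I would fix a smooth partition of unity $\{\psi_j\}_{j \geq \log_2 R_0}$ subordinate to the dyadic annuli $A_j = \{\langle x\rangle \approx 2^j\}$, then define
\[
h(r) \;=\; \sigma \sum_{j \geq \log_2 R_0} c_j\, 2^{-j}\, \psi_j(r).
\]
Since the $\psi_j$ have bounded overlap and $(c_j)$ is slowly-varying with $c_k/c_j \leq 2^{\delta|k-j|}$ for $\delta \ll 1$, on the region $r \approx 2^j$ only finitely many terms contribute and each has size comparable to $\sigma c_j 2^{-j}$, yielding $h(r) \approx \sigma c_j 2^{-j}$ there. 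Smoothness of $h$ is automatic.

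Next I would define
\[
f(r) \;=\; \exp\!\Bigl(\int_{R_0}^{r} h(s)\,ds\Bigr).
\]
Then $f \in C^\infty$, $f(R_0) = 1$, $f$ is monotone nondecreasing (since $h \geq 0$), and $f'(r) = h(r) f(r) \approx \sigma c_j 2^{-j} f(r)$ on $r \approx 2^j > R_0$, which is the derivative condition. For the size condition, I estimate the total variation of $\log f$ using that $\psi_j$ is supported in $\{r \approx 2^j\}$ so $\int \psi_j(s)\,ds \lesssim 2^j$, giving
\[
\int_{R_0}^{\infty} h(s)\,ds \;\lesssim\; \sigma \sum_j c_j\, 2^{-j} \cdot 2^j \;=\; \sigma \sum_j c_j \;\lesssim\; \sigma\, \mathbf{c}.
\]
Therefore $1 \leq f(r) \leq e^{C\sigma \mathbf{c}}$ for all $r > R_0$, i.e.\ $f \approx_\sigma 1$.

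There is no serious obstacle here; the only subtlety is ensuring that the comparability $h(r) \approx \sigma c_j 2^{-j}$ on $A_j$ survives after summing across overlapping dyadic bumps, which is exactly what the slowly-varying hypothesis on $(c_j)$ is designed for. The summability $\sum_j c_j \lesssim \mathbf{c}$ is what makes the exponential integrable and keeps $f$ bounded; without it, $f$ would grow and the construction would fail.
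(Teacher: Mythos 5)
Your proposal is correct and follows essentially the same approach as the paper: exponentiate the integral of a smooth function $h(r) \approx \sigma c_j 2^{-j}$ on $A_j$, then use $\sum_j c_j \lesssim \mathbf{c}$ to control the total variation of $\log f$. The only cosmetic difference is that you build $h$ directly with a dyadic partition of unity (relying on the slowly-varying property to keep the overlaps harmless), whereas the paper instead invokes the smooth interpolant $c(s)$ of the sequence $(c_j)$ from \cites{Tat08} and sets $h(s)=\sigma c(s)/s$; the two constructions are interchangeable.
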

Here, $(c_j)$ is the slow-varying sequence introduced in Section \ref{setup}.
\begin{remark}
Although the sequence $(c_j)$ is not defined for all natural numbers, the indices where it is not defined index finitely many dyadic regions (in particular, they omit where the operator $P$ need not be a small $AF$ perturbation). Since this region is compact, we can extend the sequence to such indices in an arbitrary manner. The typical way that this sequence is extended is by choosing $c_j$ so that $\norm{g-m}_{AF(A_j)}\lesssim c_j$ for the previously-undefined indices $j$.
\end{remark}
\begin{proof}
    As in \cites{Tat08}, we can construct a smooth function $c(s)$ from the sequence $(c_j)$ such that $c(s)\in (c_j, 2c_j)$ for each $s\in (2^j, 2^{j+1})$ and $|c'(s)|\leq\delta s^{-1}c(s).$ Since $(c_j)$ is a positive sequence which converges to zero, it has a positive maximum, say $c_N$. Then, we observe that
    $$\textbf{c}\lesssim c_N\leq c(2^N+2^{N-1})=\left|\int_{2^N+2^{N-1}}^\infty c'(s)\, ds\right|\lesssim\int\limits_1^\infty \frac{c(s)}{s}\, ds$$
    and $$\int\limits_1^\infty\frac{c(s)}{s}\, ds\leq \sum_{j=0}^\infty \int\limits_{2^j}^{2^{j+1}} \frac{2c_j}{2^j}\, ds=2\sum_{j=0}^\infty c_j\lesssim \textbf{c}.$$ That is, $$\int\limits_1^\infty\frac{c(s)}{s}\, ds\approx\textbf{c}.$$  Now, set 
    $$f(r)=\exp\left(\sigma\int\limits_1^{r}\frac{c(s)}{s}\, ds\right).$$ From our prior estimate, it is immediate that $$f(r) \approx e^{\sigma \textbf{c}}\approx_\sigma 1$$ for $r>R_0,$ and $$ f'(r)= \sigma\frac{ c(r)}{r} f(r)\approx \sigma c_j2^{-j}f(r)$$ for $r\approx 2^j. $
\end{proof}
Now, we complete steps (1b) and (1c).
\begin{Lemma}[Non-trapped Escape Function Construction]\label{non-trap}
Let $R\geq R_0$. Then, there exist\newline\noindent $q^\pm\in C^\infty(T^*\R^3\setminus o)$ and $W^\pm\subset \Omega_\infty^\pm$ 
so that $V_R^\pm\cup W^\pm=T^*\R^3\setminus o$ and $$H_{p^\pm}q^\pm
\gtrsim c_j2^{-j}\mathbbm{1}_{W^\pm}, \qquad |x|\approx 2^{j}.$$ Further, $q^\pm=\varepsilon q_{in}^\pm+q_{out}^\pm,$ where $q_{in}^\pm=\tilde{q}_{in}^\pm\circ \Phi^\pm$ with $\tilde{q}_{in}^\pm\in C^\infty(T^*\R^3\setminus o)$ is supported in $\{|x|\leq 4R\}$, $q_{out}^\pm\in S^0_{\operatorname{hom}}(T^*\R^3\setminus o)$, and $\varepsilon>0$ is sufficiently small.   
\end{Lemma}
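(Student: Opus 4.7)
The plan is the Doi-plus-Morawetz escape function construction used in \cites{Doi96,BR14,mst20}, assembled by a bootstrap with a small parameter $\varepsilon > 0$ and adapted to the half-wave decomposition. For the exterior piece $q_{out}^\pm$, I would take a Morawetz-type radial multiplier such as
\[
q_{out}^\pm(x,\xi)=f(r)\,\frac{x^j\xi_j}{r\,|b^\pm(x,\xi)|},
\]
smoothed near $x=0$, with $f$ from Proposition \ref{bootstrap fcn} for a suitable $\sigma>0$. Proposition \ref{b scale} ensures $\xi/|b^\pm|\in S^0_{\operatorname{hom}}$, so $q_{out}^\pm\in S^0_{\operatorname{hom}}(T^*\R^3\setminus o)$. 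A direct calculation of $H_{p^\pm}q_{out}^\pm$, using the smallness of $g-m$ on $\{|x|>R_0\}$ and the bound $f'\approx\sigma c_j 2^{-j}f$, should give $H_{p^\pm}q_{out}^\pm\gtrsim c_j 2^{-j}$ on $\{|x|\approx 2^j > R\}$, while leaving it bounded and possibly sign-indefinite in the interior $\{|x|\le R\}$.

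For the interior piece I would follow Doi: choose $\tilde{\chi}\in C_c^\infty$ with $\tilde{\chi}=1$ on $\{|x|\le 2R\}$ and $\supp\tilde{\chi}\subset\{|x|\le 4R\}$, together with a phase-space cutoff $\psi^\pm\in C^\infty(T^*\R^3\setminus o)$ that vanishes near $\dot{V}_R^\pm$ and equals $1$ outside a slightly larger neighborhood. Outside $V_R^\pm$ all trajectories belong to $\Omega_\infty^\pm$ (appealing, if necessary, to a slight enlargement of $V_R^\pm$ from Lemma \ref{semibdd} using flow invariance of $\Gamma_{tr}^\pm\cup\Lambda_{tr}^\pm$), so Proposition \ref{decomp}(c) yields uniform exit times $s_w$ from $\{|x|\le 4R\}$ on compact pieces of the unit cosphere bundle. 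Pasting together finitely many such neighborhoods as in the proof of Lemma \ref{semibdd}, I would define
\[
\tilde{q}_{in}^\pm(y,\eta)=-\psi^\pm(y,\eta)\sum_k\chi_k(y,\eta)\int_0^{s_k}\tilde{\chi}(x_s^\pm(y,\eta))\,ds
\]
and set $q_{in}^\pm=\tilde{q}_{in}^\pm\circ\Phi^\pm$. The identity $H_{p^\pm}(h\circ\Phi^\pm)=(H_{p^\pm}h)\circ\Phi^\pm$ established in the proof of Lemma \ref{semibdd} (which uses $b^\pm$ as a constant of motion) yields
\[
H_{p^\pm}q_{in}^\pm=(\tilde{\chi}\,\psi^\pm)\circ\Phi^\pm + E^\pm,
\]
where $E^\pm$ is the error from $H_{p^\pm}$ hitting $\psi^\pm$ or the partition cutoffs $\chi_k$. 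By engineering $\psi^\pm$ so that its variation region sits inside $\Phi^\pm(\{|x|\sim R\})$, the support of $E^\pm$ lies in the transition zone where $H_{p^\pm}q_{out}^\pm\gtrsim R^{-1}$ is already strictly positive.

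Setting $q^\pm=\varepsilon q_{in}^\pm+q_{out}^\pm$ with $\varepsilon>0$ small, the interior contribution $\varepsilon(\tilde{\chi}\psi^\pm)\circ\Phi^\pm$ controls $\{|x|\le R\}\setminus V_R^\pm$, where $H_{p^\pm}q_{out}^\pm$ is merely bounded; the exterior contribution $H_{p^\pm}q_{out}^\pm$ controls $\{|x|>R\}$ with room to absorb $\varepsilon|E^\pm|$; together they produce the lower bound $\gtrsim c_j 2^{-j}$ on the open set $W^\pm$ where this holds. By construction $V_R^\pm\cup W^\pm=T^*\R^3\setminus o$ and $W^\pm\subset\Omega_\infty^\pm$. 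The hard part is arranging $E^\pm$ so that its support lies exactly where $H_{p^\pm}q_{out}^\pm$ is strictly positive, so that a single choice of $\varepsilon$ absorbs it uniformly in the dyadic index $j$. The half-wave formalism is crucial here: for $p$ itself, composing a symbol with $\Phi^\pm$ would spawn metric cross-term errors in the Hamilton derivative that cannot be absorbed, whereas for $p^\pm$ these errors vanish because $b^\pm$ is exactly the constant of motion; the $\Phi^\pm$-rescaling then reduces the whole construction to uniform estimates on the compact unit cosphere bundle.
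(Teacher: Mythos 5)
Your overall strategy matches the paper's: a Doi-type interior escape function built from time-averaged cutoffs, an exterior Morawetz multiplier built from $f$ in Proposition \ref{bootstrap fcn}, and a small-$\varepsilon$ bootstrap that uses the positivity of the exterior piece to absorb the interior errors. However, there are two concrete problems, one of which is a genuine gap.

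First, a sign error: taking $q_{out}^+ = f(r)\,\frac{x\cdot\xi}{r\,|b^+|}$ gives the \emph{wrong} sign. In the flat case $b^+=|\xi|$, $H_{p^+} = -\frac{\xi}{|\xi|}\cdot\nabla_x$, and a direct computation gives
\begin{equation*}
H_{p^+}\!\left(f(r)\frac{x\cdot\xi}{r|\xi|}\right) = -f'(r)\frac{(x\cdot\xi)^2}{r^2|\xi|^2} - \frac{f(r)}{r}\left(1 - \frac{(x\cdot\xi)^2}{r^2|\xi|^2}\right) \le 0,
\end{equation*}
which is nonpositive. You need the opposite sign for $p^+$ (and the sign you wrote for $p^-$). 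The cleanest fix is the paper's choice $q_{out}^\pm = -\chi_{>R}(|x|)\,f(|x|)\,b^\pm_{\xi_k}\frac{x_k}{|x|}$, where $b^\pm_{\xi_k}$ (rather than $\xi_k/|b^\pm|$) automatically carries the correct $\pm$-dependent sign.

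Second, and more seriously, your interior construction puts the phase-space cutoff $\psi^\pm$ \emph{outside} the time integral as a multiplicative factor. When $H_{p^\pm}$ acts, this produces the error $-(H_{p^\pm}\psi^\pm)\sum_k\chi_k\int_0^{s_k}\tilde{\chi}\circ\varphi_s^\pm\,ds$, plus analogous errors from $H_{p^\pm}\chi_k$ (which do not cancel because the integration endpoints $s_k$ differ with $k$). The support of $H_{p^\pm}\psi^\pm$ sits in the transition annulus of $\psi^\pm$ near $\partial\dot{V}_R^\pm$; since $\dot{V}_R^\pm$ is a neighborhood of the semi-trapped set $\dot{\Omega}_R^\pm$, this transition region is generically deep in the interior $\{|x|<R\}$ — its location is controlled by the trapped set, not by $|x|$. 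Your claim that you can ``engineer $\psi^\pm$ so that its variation region sits inside $\Phi^\pm(\{|x|\sim R\})$'' is unjustified, and without it the error $E^\pm$ has no lower bound from $H_{p^\pm}q_{out}^\pm$ to absorb it, so no single $\varepsilon$ works. The paper sidesteps this entirely by placing $\psi^\pm$ \emph{inside} the integral as the integrand, $\tilde{q}_{in}^\pm = -\chi_{<2R}(|x|)\int_0^\infty \psi^\pm\circ\varphi_s^\pm\,ds$. Then $H_{p^\pm}$ applied to the integral yields $\psi^\pm$ exactly (the boundary term at $s=\infty$ vanishes along nontrapped rays), with no $H_{p^\pm}\psi^\pm$ term at all; the only error comes from $H_{p^\pm}$ hitting the purely spatial cutoff $\chi_{<2R}$, which is by construction supported in $\{2R\le|x|\le 4R\}$, exactly where $H_{p^\pm}q_{out}^\pm$ is uniformly positive. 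This localization is what makes the absorption at a single $\varepsilon$ possible, and it is precisely what your version loses. Note also that the nonnegativity of $H_{p^\pm}q^\pm$ \emph{everywhere} (not just $\gtrsim$ on $W^\pm$) is what is actually used later when the two escape functions are summed in the proof of Lemma \ref{escape}; your uncontrolled interior errors would break that summation as well.
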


The inclusion of the sequence $(c_j)$ is necessitated by the prior proposition, which is used for bootstrapping purposes in the exterior region. Its slowly varying nature allows one to work in the weight $\inprod{x}^{-2}$ from the powers $|x|\approx 2^{-j}$ which will arise in the exterior (there is no trouble working in the weight $\inprod{x}^{-2}$ in the interior region by compactness).
\begin{proof}
 Recall from Proposition \ref{b scale} that $|\xi|\approx |b^\pm(x,\xi)|$ on $T^*\R^3\setminus o;$ let $c^\pm, C^\pm > 0$ denote the respective lower and upper bound implicit constants in the inequalities and take $\delta^\pm$ such that $c^\pm-\delta^\pm>0.$ Now, choose $\psi^\pm\in C_c^\infty(T^*\R^3\setminus o)$ such that  \begin{align*}
\supp\psi^\pm&\subset  \Omega_\infty^\pm \cap\{|x|\leq R\}\cap \{c^\pm-\delta^\pm<|\xi|<C^\pm+1\},\\
\psi^\pm\equiv 1\text{ on }U_R^\pm&:=\left(\Omega_\infty^\pm\cap\{|x|\leq R\} \cap \Phi^\pm(T^*\R^3\setminus o)\right)\setminus \dot{V}_R,\end{align*}  where $R\geq R_0.$ 
Now, we define the function $$\tilde{q}_{in}^\pm(x,\xi)=-\chi_{<2R}(|x|)\int\limits_0^\infty \psi^\pm\circ \varphi_s^\pm(x,\xi)\, ds,\qquad (x,\xi)\in T^*\R^3\setminus o.$$ Since non-trapped null bicharacteristic rays must exit any compact set after a finite amount of time, this integral is well-defined for each $(x,\xi)\in T^*\R^3\setminus o,$ which establishes $\tilde{q}_{in}^\pm$ as a well-defined function. It takes more work to show that  $\tilde{q}_{in}^\pm$ is smooth. Similar to \cites{BR14}, we will begin by establishing a maximal amount of time that bicharacteristic rays can remain in the support of the integrand. We already know that $\supp\psi^\pm$ is compact. Let $V^\pm$ be an open neighborhood of $\supp\psi^\pm$ such that $\overline{V^\pm}\subset \Omega_\infty^\pm$. Take $\overline{V^\pm}=K$ in Proposition \ref{decomp}(c), and let $T'$ be as given in the proposition. 
We claim that every point  $w^\pm\in T^*\R^3\setminus o$ has a neighborhood $U_{w^\pm}$ of $w^\pm$ and a time $s_{w^\pm}\geq 0$ such that $ (\psi^\pm\circ\varphi_s^\pm)(z)=0 $ for every $z\in U_{w^\pm}$ and $s\in\R_+\setminus [s_{w^\pm},s_{w^\pm}+T'].$ That is, all  bicharacteristics (with speed $\approx 1$) can spend no more than time $T'$ within $\supp\psi^\pm.$ The time $s_{w^\pm}$ bears no similarity to the variable of the same name in the proof of Proposition \ref{semibdd}.
As a direct consequence of Proposition \ref{decomp}, we may take $U_{w^\pm}=V^\pm$ and $s_{w^\pm}=0$ whenever $w^\pm\in\supp\psi^\pm\subset V^\pm.$ If  $$w^\pm\notin\bigcup\limits_{s\in\R}\varphi_{-s}^\pm\left(\supp\psi^\pm\right)=:\mathcal{X}^\pm,$$ then the fact that $\mathcal{X}^\pm$ is closed provides an open neighborhood  $U_{w^\pm}$ of $w^\pm$ such that $\mathcal{X}^\pm\cap U_{w^\pm}=\emptyset.$ For each $z\in U_{w^\pm}$, we have that $\varphi^\pm_s (z)\notin \supp\psi^\pm$ for all $s\in\R$, i.e. $(\psi^\pm\circ\varphi_s^\pm)(z)=0$ for $s\in \R$. Hence, this case holds with $U_{w^\pm}$ as defined and $s_{w^\pm}=0$. Finally, let  $w^\pm\in \mathcal{X}^\pm\setminus \supp\psi^\pm.$  Then,  $\varphi_{s'}^\pm(w^\pm)\in \supp\psi^\pm$ for some $s'\in\R\setminus \{0\}.$ If $s'>0,$ then we can combine this with the fact that $\varphi_0^\pm(w)\notin\supp\psi^\pm$ and the continuity of the flow to obtain $s_{w^\pm}>0$ such that $\varphi_{s_{w^\pm}}^\pm(w)\in V^\pm$ and $\varphi_s^\pm(w)\notin\supp\psi^\pm$ for all $s\in [0,s_{w^\pm}]$. By continuity of the flow in the data, we can extend the above to a neighborhood $U_{w^\pm}$. That is, there exists a neighborhood  $U_{w^\pm}$ of $w^\pm$ so that for all $z\in U_{w^\pm}$, we have that $\varphi^\pm_{s_{w^\pm}}(z)\in V^\pm$ and $(\psi^\pm\circ \varphi_s^\pm)(z)=0$ for all $s\in [0,s_{w^\pm}].$
Applying Proposition \ref{decomp} to $K=V^\pm$ implies that $(\psi^\pm\circ \varphi_s^\pm)(z)=0$ for all $z\in U_{w^\pm}$ and $s\in [0,s_{w^\pm}]\cup [s_{w^\pm}+T',\infty).$ It remains to consider if we cannot assume that $s'>0$. In this case,  
$$w^\pm\notin\bigcup\limits_{s\in\R_+}\varphi_{-s}^\pm(\supp\psi^\pm)=:\mathcal{X}^\pm_{-}$$ Note that $\mathcal{X}^\pm_{-}$ is closed by the same logic which showed that $\mathcal{X}^\pm$ is closed (see the proof in Proposition \ref{decomp}(c)). From here, one can simply proceed as in the case where $w^\pm\notin\mathcal{X}^\pm.$ 

Using this result, we know that the integral present in $\tilde{q}_{in}^\pm$ is always over an interval of maximal length $T'$. Hence, differentiation under the integral sign is non-problematic and in view of the regularity of the flow map, we conclude that $\tilde{q}_{in}^\pm\in C^\infty(T^*\R^3\setminus o)$. Additionally, it is supported in $\{|x|\leq 4R\}$. In particular, it is smooth and bounded in all derivatives on the compact set $$\{|x|\leq 4R\}\cap \Phi^\pm(T^*\R^3\setminus o).$$ 

Now, consider the smooth function $$q_{in}^\pm=\tilde{q}_{in}^\pm\circ\Phi^\pm$$ defined on $T^*\R^3\setminus o.$  
 As in the proof of Lemma \ref{semibdd}, we get that $$H_{p^\pm}q_{in}^\pm\big|_{(x,\xi)}=H_{p^\pm}\tilde{q}_{in}^\pm\big|_{\Phi^\pm(x,\xi)}.$$ Now, we calculate that
\begin{align*}H_{p^\pm} \tilde{q}_{in}^\pm\big|_{\Phi^\pm (x,\xi)}&=\chi_{<2R}(|x|)\psi^\pm\left(x,\frac{\xi}{|b^\pm(x,\xi)|}\right)\\
&\qquad+\frac{1}{2R}b_{\xi_k}^\pm\left(x,\frac{\xi}{|b^\pm(x,\xi)|}\right)\frac{x_k}{|x|}\chi'\left(\frac{|x|}{2R}\right)\int\limits_0^\infty \psi^\pm\circ \varphi^\pm_s\left(x,\frac{\xi}{|b^\pm(x,\xi)|}\right)\, ds.
\end{align*}
 The first term is non-negative, supported in  $\Omega_\infty^\pm \cap \{|x|\leq R\}$, and equal to $1$ on $U^\pm:=\Phi^{-1}(\dot{U}_R^\pm)$. The second term is an error term which is supported in $\{2R\leq |x|\leq 4R\}.$  The primary purpose of the exterior multiplier is to absorb this error term. To that end, let $$q_{out}^\pm =-\chi_{>R}(|x|)f(|x|) b_{\xi_k}^\pm \frac{x_k}{|x|},$$ where $f$ is the function constructed in Proposition \ref{bootstrap fcn}. It is easy to see that $q_{out}^\pm\in S^0_{\operatorname{hom}}(T^*\R^3\setminus o)$, as it is smooth, bounded in all $x$ derivatives due to asymptotic flatness, homogeneous of degree $0$, and satisfies the appropriate symbol estimate. One can readily compute that 
\begin{align*}
    H_{p^\pm} q^\pm_{out}
    &=b^\pm_{\xi_k}\frac{x_k}{|x|} \chi_{>R}(|x|)f'(|x|)b^\pm_{\xi_j}\frac{x_j}{|x|}
    +b_{\xi_k}^\pm \left(\delta_{jk}-\frac{x_jx_k}{|x|^2}\right)\chi_{>R}(|x|)\frac{f(|x|)}{|x|}\left(\delta_{jl}-\frac{x_jx_l}{|x|^2}\right)b_{\xi_l}^\pm\\
    &\qquad+R^{-1}\chi'\left(\frac{|x|}{R}\right)b^\pm_{\xi_k}\frac{x_k}{|x|}f(|x|)b^\pm_{\xi_j}\frac{x_j}{|x|}
    +\mathcal{O}(\inprod{x}|\partial g|)\chi_{>R}(|x|)|x|^{-1}.
\end{align*}

We remark that the last term is small for $|x|>R$ by asymptotic flatness (and it is localized to this region due to the cutoff), while the remaining terms are all non-negative. The third term is non-negative and supported in the annulus $\{R\leq |x|\leq 2R\}$ due to the support of $\chi'$. Making $\sigma$ large enough and using asymptotic flatness provides that, for any $|x|\approx 2^j$,
\begin{align*}
    H_{p^\pm}q_{out}^\pm &>\frac{\sigma}{2}c_j2^{-j}f(|x|)\chi_{>R}(|x|)\frac{|x\cdot \nabla_{\xi}b^\pm|^2}{|x|^2}+\chi_{>R}(|x|)\frac{f(|x|)}{|x|}\left(|\nabla_\xi b^\pm|^2-\frac{|x\cdot\nabla_\xi b^\pm|^2}{|x|^2} \right)\\
    &\gtrsim c_j2^{-j}
    \chi_{>R}(|x|)|\nabla_\xi b^{\pm}|^2\\
    &\gtrsim c_j2^{-j}\chi_{>R}(|x|).
\end{align*} Thus, $H_{p^\pm}q_{out}^\pm$ is non-negative, strictly positive for $|x|>R$, and  $$H_{p^\pm}q_{out}^\pm \gtrsim c_j2^{-j}\chi_{>R},\qquad |x|\approx 2^{j}.$$ Recall that the error term in $H_{p^\pm}q_{in}^\pm$ is bounded and supported in $\{2R\leq|x|\leq 4R\}$, and $H_{p^\pm}q_{out}^\pm$ is strictly positive on the support of this error (with a uniform bound from below on this set).

Define
$$q^\pm=\varepsilon q_{in}^\pm+q_{out}^\pm\in C^\infty(T^*\R^3\setminus o),$$ where $0<\varepsilon\ll 1$. By choosing $\varepsilon$ sufficiently small, we may absorb the aforementioned error due to our prior discussion, obtaining that
$H_{p^\pm}q^\pm$  is non-negative everywhere and positive on $$W^\pm:=U^\pm\cup\{(x,\xi)\in T^*\R^3\setminus o: |x|>R\}.$$ By Proposition \ref{decomp}(a), \begin{align*}
V_R^\pm\cup U^\pm&=V_R^\pm\cup\left((\Omega_\infty^\pm\cap \{|x|\leq R\})\setminus V_R^\pm \right)\\
&\supset \left(\Omega_R^\pm\cup \Omega_\infty\right)\cap\{|x|\leq R\}\\
&=\left(T^*\R^3\setminus o\right)\cap\{|x|\leq R\},
\end{align*} and so 
\begin{align*}
V_R^\pm\cup U^\pm&=\left(T^*\R^3\setminus o\right)\cap\{|x|\leq R\},\\
V_R^\pm\cup W^\pm&=T^*\R^3\setminus o.
\end{align*}
We have already shown that $$H_{p^\pm}q^\pm\approx 1\qquad (x,\xi)\in U^\pm$$ and $$H_{p^\pm}q^\pm\gtrsim c_j2^{-j} \chi_{>R},\qquad |x|\approx 2^j.$$ The latter estimate readily extends to
$$H_{p^\pm}q^\pm\gtrsim c_j2^{-j} \mathbbm{1}_{W^\pm},\qquad |x|\approx 2^j$$ by the compactness of the interior region $\{|x|\leq R\}$.
\end{proof}
Now, we combine on the light cones to get our desired symbol $q$, as well as obtain positivity on the elliptic set (step (2)). This largely follows the steps present in \cites{mst20}, although we have additional technicalities resulting from the damping.

\begin{proof}[(Proof of Lemma \ref{escape})]
Let $q_1^\pm$ denote the symbol $q^\pm$ constructed in Lemma \ref{semibdd} (not the symbol $q_1^\pm$ from the same lemma) and $q_2^\pm$ denote the symbol $q^\pm$ constructed in Lemma \ref{non-trap}. We remark that, as a consequence of the chain rule, both symbols satisfy the standard $S^0$ bounds for $|\xi|\geq 1.$  First, we truncate to the high-frequency regime via the symbols 
 $$q^\pm_{j,>\lambda}=e^{-\sigma q_j^\pm}\chi_{>\lambda}( |b^\pm|),\ j=1,2,$$
  where $\sigma$ is the parameter in Proposition \ref{bootstrap fcn}. 
We assume that $\lambda>1.$ The exponentiation is implemented for bootstrapping: taking derivatives of the exponential will provide multiplication by $\sigma\gg 1$. Since $|b^\pm| \approx |\xi|,$ these cutoffs genuinely truncate to high frequencies when $\lambda$ is large. Further, the truncation to $|\xi|\gtrsim 1$ eliminates the singularities of $q^\pm_j,$ i.e. $q_j^\pm\chi_{>\lambda}( |b^\pm|)$ smoothly extends to an element of $S^0(T^*\R^3).$ 

We claim that exponentiation preserves the symbol class, so that $q_{j,>\lambda}^\pm\in S^0(T^*\R^3).$ We can immediately see that $q_{j,>\lambda}^\pm$ is smooth. Note that for $|\xi|\geq \lambda,$ the exponentials $e^{-\sigma q_j^\pm}$ are bounded since $q_j^\pm$ are bounded, and for $|\xi|<\lambda$, we immediately have that $q_{j,>\lambda}^\pm\equiv 0$. When checking the symbolic nature of $q_{j,>\lambda}^\pm$, we only need to study the boundedness of the $\xi$ derivatives since our symbols $q_j^\pm$ are bounded in all derivatives in $x$. Taking a partial derivative in $\xi$ provides that 
$$\partial_{\xi_k}q^\pm_{j,>\lambda}=-\sigma(\partial_{\xi_k}q_j^\pm)q^\pm_{j,>\lambda}\mp\frac{e^{-\sigma q^\pm_j}}{\lambda}(\partial_{\xi_k} b^\pm)\chi'\left(\frac{|b^\pm|}{\lambda}\right).$$ 
The first term is $\mathcal{O}(\inprod{\xi}^{-1})$, and the second term is compactly-supported in $\xi$.  
 Due to the aforementioned compact support, we only need consider further $\xi$ differentiation of $\sigma (\partial_{\xi_k}q_j^\pm)q^\pm_{j,>\lambda}$. If the $\xi$ derivative lands on the exponential, then the result is $\mathcal{O}(\inprod{\xi}^{-2})$ by the prior argument. If the derivative lands on $\partial_\xi  q_j^\pm$, then the same asymptotics hold since $\partial_\xi  q_j^\pm\in S^{-1}(T^*\R^3).$ If the derivative lands on the cutoff, then the result is compactly-supported in $\xi$. Inducting  establishes that $q^\pm_{j,>\lambda}\in S^0(T^*\R^3)$.

Now, we combine the symbols constructed on each light cones together as $$q(\tau,x,\xi)=(\tau-b^+)(q^-_{1,>\lambda}+q^-_{2,>\lambda})+(\tau-b^-)(q^+_{1,>\lambda}+q^+_{2,>\lambda}).$$ 
Calling  $$q_j=(\tau-b^+)q^-_{j,>\lambda}+(\tau-b^-)q^+_{j,>\lambda},$$ we can see that 
\begin{align*}
    (H_p q+2\gamma\tau aq)\big|_{\tau=b^\pm}&=(H_p q_1+2\gamma\tau aq_1)\big|_{\tau=b^\pm}+(H_p q_2+2\gamma\tau aq_2)\big|_{\tau=b^\pm}\\
    &=H_p q_1 \big|_{\tau=b^\pm}\pm 2\gamma b^\pm (b^+-b^-)aq_{1,>\lambda}^\pm\\
    &\qquad+H_p q_2 \big|_{\tau=b^\pm}\pm 2\gamma b^\pm (b^+-b^-)aq_{2,>\lambda}^\pm.
\end{align*} We will work with each term in the last equality separately. First, we compute that \begin{align*}
    H_p q_j\big|_{\tau=b^\pm}&=-(b^+-b^-)^2H_{p^\pm} q_{j,>\lambda}^\pm\\
    &\qquad-(b^\pm-b^\mp)q_{j,>\lambda}^\pm(b_{\xi_j}^\pm b_{x_j}^\mp-b^\pm_{x_j}b^\mp_{\xi_j})\\
    &= \sigma (b^+-b^-)^2 q_{j,>\lambda}^\pm H_{p^\pm} q_j^\pm\\
    &\qquad-(b^\pm-b^\mp)q_{j,>\lambda}^\pm(b_{\xi_j}^\pm b_{x_j}^\mp-b^\pm_{x_j}b^\mp_{\xi_j}).
\end{align*}  By making $\sigma$ sufficiently large, we get that 
\begin{align*}
   H_p q_j\big|_{\tau=b^\pm}&\geq \frac{1}{2}\sigma(b^+-b^-)^2q_{j,>\lambda}^\pm H_{p^\pm} q_j^\pm+E_j^\pm, 
\end{align*} where $E_j^\pm$ are error terms which are supported in a neighborhood of the region where $H_{p^\pm} q_j^\pm=0$. These terms are non-problematic, as they are readily absorbed into the above estimate with differing $j$ when we combine the estimates together. Hence, we will drop the $E_j^\pm$'s for ease of notation. 

Observe that $$\frac{b^\pm}{b^\pm- b^\mp}\approx 1.$$ By choosing $\gamma$ large enough, we may apply Lemma \ref{semibdd} to obtain that

\begin{align}\label{highfreqp1}
(H_p q_1+2\gamma \tau aq_1)\big|_{\tau=b^\pm}&\geq \frac{1}{2}\sigma(b^+-b^-)^2q_{1,>\lambda}^\pm H_{p^\pm} q_1^\pm\pm 2\gamma b^\pm (b^+-b^-)aq_{1,>\lambda}^\pm \\
\nonumber
&=\frac{1}{2}\sigma(b^+-b^-)^2q_{1,>\lambda}^\pm\left(H_{p^\pm} q_1^\pm+\left(\frac{4\gamma}{\sigma}\right)\frac{b^\pm}{b^\pm-b^\mp}a\right)\\
\nonumber
&\gtrsim |\xi|^2q_{1,>\lambda}^\pm\left(H_{p^\pm} q_1^\pm+\frac{\gamma}{\sigma}a\right)\\
\nonumber
&\gtrsim \mathbbm{1}_{|\xi|\geq\lambda}\mathbbm{1}_{{V_R^\pm}}|\xi|^{2}. 
\end{align}
Notice that $\gamma$ depends on $\textbf{c}$ (and $\sigma)$.
For the $j=2$ term, we use the prior computation, the fact that the damping term has positive sign, and Lemma \ref{non-trap}:

\begin{align}\label{highfreqp2}
    (H_p q_2+2\gamma\tau aq_2)\big|_{\tau=b^\pm}&\geq \frac{1}{2}\sigma(b^+-b^-)^2q_{2,>\lambda}^\pm H_{p^\pm} q_2^\pm\pm 2\gamma b^\pm (b^+-b^-)aq_{2,>\lambda}^\pm\\
    \nonumber
    &\gtrsim \mathbbm{1}_{|\xi|\geq \lambda}\mathbbm{1}_{W^\pm} c_j 2^{-j}|\xi|^{2},\qquad |x|\approx 2^j.
\end{align} Recall that $V_R^\pm\cup W^\pm=T^*\R^3\setminus o$. Combining (\ref{highfreqp1}) and (\ref{highfreqp2}) together, we conclude that 
$$(H_p q+2\gamma\tau aq)\big|_{\tau=b^\pm}\gtrsim \mathbbm{1}_{|\xi|\geq\lambda}\langle x\rangle^{-2}|\xi|^{2},$$ where we have used the slowly-varying, summable nature of $(c_j)$. 

This provides the desired bound over the characteristic set. To extend it to all of phase space, we must construct a lower-order correction term. 
Explicitly, we seek an $m\in S^0$ so that $$H_p q+2\gamma\tau aq+mp\gtrsim \mathbbm{1}_{|\xi|\geq\lambda}\inprod{x}^{-2}|\xi|^2.$$ If we write $$H_pq+2\gamma a\tau q=a_0\tau^2+a_1\tau+a_2,$$ where $a_j\in S^j$, then we have already established that \begin{align}\label{lang pos}
    a_0(x,\xi)(b^\pm(x,\xi))^2+a_1(x,\xi)b^\pm(x,\xi)+a_2(x,\xi)\gtrsim\mathbbm{1}_{|\xi|\geq\lambda}\inprod{x}^{-2}|\xi|^2
\end{align}  So, we must analyze the quantity   
$$a_0\tau^2+a_1\tau+a_2+p{m}=(a_0-{m})\tau^2+(a_1+(b^++b^-){m})\tau+(a_2-b^+b^-{m}).$$  If we choose ${m}$ so that 
\begin{align}
\label{cor cond 1}
a_0-{m}>0,\qquad |\xi|\geq\lambda
\end{align} and 
\begin{align}
\label{cor cond 2}
(a_1+(b^++b^-){m})^2-4(a_0-{m})(a_2-b^+b^-{m})<0,\qquad |\xi|\geq\lambda,
\end{align} then we will have that $a_0\tau^2+a_1\tau+a_2+mp$ is positive for $|\xi|\geq\lambda$ (the first condition on ${m}$ guarantees that this polynomial in $\tau$ is concave up, and the second guarantees that there are no 
real zeros). 

Let us begin by focusing on (\ref{cor cond 2}). The function \begin{align*}
P({m})&=(a_1+(b^++b^-){m})^2-4(a_0-{m})(a_2-b^+b^-{m})\\
&=(b^+-b^-)^2{m}^2+(2a_1(b^++b^-)+4a_0b^+b^-+4a_2){m}+(a_1^2-4a_0a_2)
\end{align*} is a quadratic polynomial in ${m}$ with a positive coefficient on the quadratic term, so it will achieve a minimal value at $${m}=-\frac{a_1(b^++b^-)+2(a_0b^+b^-+a_2)}{(b^+-b^-)^2}.$$ 

It is readily seen that ${m}\in S^0$ and that $m$ is supported where $|\xi|\geq\lambda$. This minimal value is 
\begin{align*}
P({m})&=\left(a_1-(b^+-b^-)\frac{a_1(b^++b^-)+2(a_0b^+b^-+a_2)}{(b^+-b^-)^2}\right)^2\\
&\qquad-4\left(a_0+\frac{a_1(b^++b^-)+2(a_0b^+b^-+a_2)}{(b^+-b^-)^2}\right)\left(a_2+b^+b^-\frac{a_1(b^++b^-)+2(a_0b^+b^-+a_2)}{(b^+-b^-)^2}\right)\\
&=-4\frac{(a_0(b^+)^2+a_1b^++a_2)(a_0(b^-)^2+a_1b^-+a_2)}{(b^+-b^-)^2}\\
&=-4(b^+-b^-)^{-2}\left((H_p q+2\gamma\tau aq)\big|_{\tau=b^+}\right)\left(H_p q+2\gamma\tau aq)\big|_{\tau=b^-}\right)\\
&<0,
\end{align*} where we have used (\ref{lang pos}).
So, (\ref{cor cond 2}) is satisfied. To establish (\ref{cor cond 1}), one can readily check that
 \begin{align*}
    a_0-{m}&=a_0+\frac{a_1(b^++b^-)+2(a_0b^+b^-+a_2)}{(b^+-b^-)^2}\\
    &=(b^+-b^-)^{-2}\left((H_p q+2\gamma\tau aq)\big|_{\tau=b^+}+(H_p q+2\gamma\tau aq)\big|_{\tau=b^-}\right)\\
    &>0
\end{align*} for $|\xi|\geq\lambda$.

This gives us that $$H_pq+2\gamma \tau aq+mp>0$$ for $|\xi|\geq\lambda$. In fact, we can check that the minimal value of the above in $\tau$ for $|\xi|\geq\lambda$  is 
   $$ \frac{(a_0(b^+)^2+a_1b^++a_2)(a_0(b^+)^2+a_1b^++a_2)}{(a_0(b^+)^2+a_1b^++a_2)+(a_0(b^-)^2+a_1b^{-}+a_2)}
    =\frac{\left((H_p q+2\gamma\tau aq)\big|_{\tau=b^+}\right)\left(H_p q+2\gamma\tau aq)\big|_{\tau=b^-}\right)}{(H_p q+2\gamma\tau aq)\big|_{\tau=b^+}+(H_p q+2\gamma\tau aq)\big|_{\tau=b^-}}.$$
 The numerator is bounded below by $\inprod{x}^{-4}|\xi|^4.$ In view of the support and symbolic properties of $q$, the denominator satisfies the bounds $$ (H_p q+2\gamma\tau aq)\big|_{\tau=b^+}+(H_p q+2\gamma\tau aq)\big|_{\tau=b^-}\approx \inprod{x}^{-2}|\xi|^2.$$ Since $|b^\pm(x,\xi)|\approx |\xi|$ and $|\tau|=|b^\pm(x,\xi)|$ in the above, we conclude the desired result.
\end{proof}

\subsection{Starting Energy Estimates and Case Reductions}
\label{reductions section}
In this section, we will establish various useful
energy estimates, then reduce the proof of Theorem \ref{high freq est} to a simpler problem. Our starting point is a
standard uniform energy inequality.
\begin{prop}\label{first ee}
Let $P$ be a stationary damped wave operator, $\partial_t$ be uniformly time-like, and $T>0$. Then, we have the estimate \begin{align}\label{starting ee}
    \norm{\partial u(t)}_{ L^2}^2\lesssim\norm{\partial u(0)}_{L^2}^2+ \int\limits_0^T\int\limits_{\R^3} |Pu\ \partial_t u|\, dxdt,\qquad 0\leq t\leq T
\end{align} for all $u\in \mathcal{W}_T$.
\end{prop}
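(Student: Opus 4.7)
The plan is to introduce the standard energy functional adapted to the stationary Lorentzian metric, differentiate it in time against $Pu$, and close via Gr\"onwall's inequality. Concretely, I would set
\[
E(t) = \frac{1}{2}\int_{\R^3}\Bigl(|\partial_t u(t,x)|^2 + g^{ij}(x)\,\partial_i u(t,x)\,\overline{\partial_j u(t,x)}\Bigr)\, dx.
\]
The uniform ellipticity (\ref{ellipticity}) (which follows from the uniform timelike hypothesis on $\partial_t$ together with the normalization $g^{00}\equiv -1$), combined with the $L^\infty$ bounds on $g^{ij}$ from asymptotic flatness, gives $E(t) \approx \norm{\partial u(t)}_{L^2}^2$, so it suffices to control $E(t)$ by the right-hand side of (\ref{starting ee}).

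Next, I would write $P$ out explicitly using stationarity and $g^{00}\equiv -1$:
\[
Pu = \partial_t^2 u - 2g^{0j}\partial_j\partial_t u - \partial_i\!\bigl(g^{ij}\partial_j u\bigr) + a\,\partial_t u.
\]
Pairing with $\overline{\partial_t u}$, integrating over $\R^3$, and taking real parts, I would integrate by parts in $x$, which is legitimate since $u\in\mathcal{W}_T$ has compact spatial support for every $t\in[0,T]$. The $\partial_t^2 u$ contribution produces $\tfrac{1}{2}\partial_t\!\int|\partial_t u|^2\, dx$, and the term $-\partial_i(g^{ij}\partial_j u)$ produces $\tfrac{1}{2}\partial_t\!\int g^{ij}\partial_i u\,\overline{\partial_j u}\, dx$ (using symmetry of $g^{ij}$ and stationarity, so that no $\partial_t g^{ij}$ term survives). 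The cross term becomes
\[
-2\operatorname{Re}\!\int g^{0j}\partial_j\partial_t u\;\overline{\partial_t u}\, dx = \int (\partial_j g^{0j})|\partial_t u|^2\, dx,
\]
via the pointwise identity $\operatorname{Re}(h\,\partial_j v\,\bar v) = \tfrac{1}{2}h\,\partial_j|v|^2$ and one more integration by parts, and the damping contributes $\int a|\partial_t u|^2\, dx \geq 0$, which can be discarded. Collecting everything,
\[
\partial_t E(t) \leq \operatorname{Re}\!\int Pu\;\overline{\partial_t u}\, dx - \int (\partial_j g^{0j})|\partial_t u|^2\, dx \leq \int_{\R^3}|Pu\,\partial_t u|\, dx + C\,E(t),
\]
where the last inequality uses the $L^\infty$ bound $\norm{\partial_j g^{0j}}_{L^\infty(\R^3)}\lesssim 1$ furnished by asymptotic flatness.

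Integrating in time on $[0,t]\subset[0,T]$ and invoking Gr\"onwall's inequality then gives
\[
E(t) \lesssim E(0) + \int_0^T\!\!\int_{\R^3}|Pu\,\partial_t u|\, dx\, ds,
\]
with an implicit constant $\lesssim e^{CT}$ that I would simply absorb into $\lesssim$. Using the coercivity $E(t)\approx \norm{\partial u(t)}_{L^2}^2$ then yields (\ref{starting ee}).

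The only mild obstacle is the Lorentzian cross term $g^{0j}\partial_j\partial_t u$, absent in the static/product setting of \cites{BR14}, which destroys exact monotonicity of $E(t)$. Asymptotic flatness resolves this by supplying the $L^\infty$-control on $\partial g$ needed for the Gr\"onwall step. Notably, neither geometric control nor any microlocal input is used at this stage, which is consistent with this being a crude preliminary estimate rather than a component of the high-frequency analysis behind Theorem \ref{high freq est}.
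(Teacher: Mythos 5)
Your overall plan---introduce the natural energy functional, differentiate, integrate by parts---matches the paper's, but there is a genuine gap stemming from a mis-expansion of the divergence-form operator, and the attempted rescue via Gr\"onwall spoils the estimate in a way the paper cannot tolerate.

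The wave operator is $\Box_g = D_\alpha g^{\alpha\beta}D_\beta$ in divergence form. With $g^{00}\equiv -1$ and stationarity, expanding gives
\[
\Box_g u = \partial_t^2 u - 2g^{0j}\partial_j\partial_t u - (\partial_j g^{0j})\,\partial_t u - \partial_i\bigl(g^{ij}\partial_j u\bigr),
\]
so $Pu = \Box_g u + a\,\partial_t u$ contains a first-order term $-(\partial_j g^{0j})\partial_t u$ that you dropped when you wrote $Pu = \partial_t^2 u - 2g^{0j}\partial_j\partial_t u - \partial_i(g^{ij}\partial_j u) + a\partial_t u$. When you pair $Pu$ with $\overline{\partial_t u}$, take real parts, and integrate, the cross term indeed produces $\int(\partial_j g^{0j})|\partial_t u|^2\,dx$ exactly as you compute, but the term you omitted contributes $-\int(\partial_j g^{0j})|\partial_t u|^2\,dx$, and the two \emph{cancel exactly}. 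This cancellation is the whole point: the paper's computation, carried out directly on the operator $g^{00}D_t^2 + D_ig^{ij}D_j$ and using that $g^{0j}D_jD_t + D_jg^{0j}D_t$ is formally self-adjoint (so its pairing against $\partial_t u$ has vanishing real part after integration by parts), yields an exact identity
\[
\frac{d}{dt}E[u](t) = 2\,\mathrm{Re}\int_{\R^3}\bar f\,\partial_t u\,dx - 2\int_{\R^3}a|\partial_t u|^2\,dx,
\]
with no residual first-order error whatsoever.

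Because you miss this cancellation, you are forced to invoke Gr\"onwall, and your closing sentence---that the resulting $e^{CT}$ factor can ``simply be absorbed into $\lesssim$''---is not permissible. The implicit constant in (\ref{starting ee}) must be independent of $T$: the paper says explicitly that all constants are $T$-independent, and Proposition \ref{first ee} feeds directly into Corollary \ref{uniform ee} (the \emph{uniform} energy estimates, which have no $[0,T]$ restriction) and ultimately into Theorem \ref{high freq est}, whose $T$-uniformity is essential (see Remark \ref{high freq rem}, where one takes $T\to\infty$). A constant growing like $e^{CT}$ would propagate through and invalidate that entire chain. So the Gr\"onwall step is not a stylistic shortcut---it is a step that would fail to establish the proposition as it is used. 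The fix is to keep the full divergence-form expansion of $\Box_g$ (or, as the paper does, to avoid expanding into non-divergence form at all and exploit formal self-adjointness of the cross term directly); then no Gr\"onwall is needed.
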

\begin{proof}
Call $Pu=f$, and define the energy functional
 $$E[u](t)=\int\limits_{\mathbb{R}^3}D_ig^{ij}D_ju\overline{ u}-g^{00}|\partial_t u|^2\, dx.$$
 After integrating the first term by parts, this functional is readily seen to be coercive due to the uniformly time-like nature of $\partial_t,$ i.e. $$E[u](t)\approx \norm{\partial u(t)}_{L^2}^2.$$ Differentiating in $t$ and integrating by parts, 
gives that
\begin{align*}
     \frac{d}{dt}E[u](t)&=-\int\limits_{\R^3}g^{00}(\partial_t^2 u\partial_t\bar{u}+\partial_t u\partial_t^2\bar{u})\, dx+\int\limits_{\R^3}D_ig^{ij}D_j\partial_t u\bar{u}+D_ig^{ij}D_j u\partial_t\bar{u}\, dx\\
     &=\int\limits_{\R^3}(g^{00}D_t^2+D_ig^{ij}D_j)u\partial_t\bar{u}+\partial_t u\overline{(g^{00}D_t^2+D_ig^{ij}D_j)u}\, dx\\
     &=\int\limits_{\R^3}\left(-(g^{0j}D_jD_t+D_jg^{0j}D_t+iaD_t)u+f\right)\partial_t\bar{u}\\
     &\qquad+\partial_t u\overline{\left(-(g^{0j}D_jD_t+D_jg^{0j}D_t+iaD_t) u+f\right)}\, dx\\
     &=2\text{Re}\int\limits_{\R^3}\bar{f}\partial_t u\, dx-2\int\limits_{\R^3}a|\partial_tu|^2\, dx.
 \end{align*}

Dropping the damping term and integrating the resulting estimate in time yields the inequality  $$E[u](t)\lesssim E[u](0)+\int\limits_0^T\int\limits_{\R^3} |{f}\partial_t u|\, dxdt,\qquad 0\leq t\leq T.$$ Applying the coercivity allows us to conclude.
\end{proof}
\begin{remark}
Note that when $Pu=0,$ we have the energy dissipation statement $$ \frac{d}{dt}E[u](t)=-2\int\limits_{\R^3}a|\partial_tu|^2\, dx\leq 0.$$ 
\end{remark}
Applying the Schwarz inequality to Proposition \ref{first ee} provides us with estimates which will prove useful throughout this work.
\begin{Cor} \label{uniform ee}
Under the same assumptions as Proposition \ref{first ee}, the uniform energy estimates
 $$\norm{\partial u}_{L^\infty_t L^2_x}\lesssim \norm{\partial u(0)}_{L^2}+\norm{Pu}_{L^1_tL^2_x},$$  $$\norm{\partial u}_{L^\infty_t L^2_x}\lesssim \norm{\partial u(0)}_{L^2}+\norm{Pu}_{LE^*}^{1/2}\norm{u}_{LE^1}^{1/2},$$ and $$\norm{\partial u}_{L^\infty_t L^2_x}\lesssim \norm{\partial u(0)}_{L^2}+\varepsilon^{-1}\norm{Pu}_{LE^*+L^1_tL^2_x}+\varepsilon\norm{u}_{LE^1},\qquad\forall \varepsilon>0$$
 hold. 
\end{Cor}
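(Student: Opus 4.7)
The plan is to start from the bound in Proposition \ref{first ee},
$$\norm{\partial u(t)}_{L^2}^2 \lesssim \norm{\partial u(0)}_{L^2}^2 + \int_0^T \int_{\R^3} |Pu \cdot \partial_t u|\, dx\, dt, \qquad 0 \le t \le T,$$
and prove all three estimates simultaneously by applying H\"older's and Young's inequalities to the cross term on the right in three different ways. In each case I would take the supremum over $t \in [0,T]$ on the left-hand side, obtain a quadratic inequality in $\norm{\partial u}_{L^\infty_t L^2_x}$, and conclude by absorption followed by taking a square root.

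For the first estimate I would bound the cross term by H\"older in $x$ followed by H\"older in $t$:
$$\int_0^T \int_{\R^3} |Pu\cdot \partial_t u|\, dx\, dt \le \norm{Pu}_{L^1_t L^2_x}\norm{\partial u}_{L^\infty_t L^2_x},$$
and then apply Young's inequality $ab \le \tfrac{1}{2}a^2 + \tfrac{1}{2}b^2$ (with a small constant on the $\norm{\partial u}_{L^\infty_t L^2_x}^2$ factor, to be absorbed into the left). For the second estimate I would instead use a dyadic decomposition in $x$ together with the weight $\inprod{x}^{1/2}\cdot \inprod{x}^{-1/2} = 1$, giving
$$\int_0^T\int_{\R^3}|Pu\cdot\partial_t u|\, dx\, dt \le \sum_j \norm{\inprod{x}^{1/2}Pu}_{L^2_t L^2_x(\R_+\times A_j)}\norm{\inprod{x}^{-1/2}\partial_t u}_{L^2_t L^2_x(\R_+\times A_j)} \le \norm{Pu}_{LE^*}\norm{u}_{LE^1},$$
where I have bounded the $\sup_j$ by pulling it outside the sum and used $\norm{\partial_t u}_{LE} \le \norm{u}_{LE^1}$. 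Taking square roots yields the second bound.

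For the third estimate, the main subtlety is the sum-space norm. I would write any admissible splitting $Pu = f_1 + f_2$ with $f_1 \in LE^*[0,T]$, $f_2 \in L^1_t L^2_x[0,T]$, bound the cross term by
$$\int_0^T \int_{\R^3} |(f_1+f_2) \partial_t u|\, dx\, dt \le \norm{f_1}_{LE^*}\norm{u}_{LE^1} + \norm{f_2}_{L^1_t L^2_x}\norm{\partial u}_{L^\infty_t L^2_x},$$
and apply Young's inequality in two ways: with a small absolute constant on the last term so that $\norm{\partial u}_{L^\infty_t L^2_x}^2$ can be absorbed on the left, and with the weight $\varepsilon$ on the first term, $\norm{f_1}_{LE^*}\norm{u}_{LE^1} \le \tfrac{1}{2\varepsilon^2}\norm{f_1}_{LE^*}^2 + \tfrac{\varepsilon^2}{2}\norm{u}_{LE^1}^2$. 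After taking a square root and using $\norm{f_2}_{L^1_tL^2_x}\le \varepsilon^{-1}\norm{f_2}_{L^1_tL^2_x}$ for $\varepsilon \le 1$ (with the case $\varepsilon > 1$ being weaker than the first estimate), I would take the infimum over all admissible decompositions $Pu = f_1 + f_2$ to replace the sum $\norm{f_1}_{LE^*}+\norm{f_2}_{L^1_tL^2_x}$ by $\norm{Pu}_{LE^*+L^1_tL^2_x}$. There is no serious obstacle; the only bookkeeping to be careful about is tracking which implicit constants are absolute versus $\varepsilon$-dependent, so that the final bound has exactly the factors $\varepsilon^{-1}$ and $\varepsilon$ claimed.
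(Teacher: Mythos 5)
Your proposal matches the paper's (very brief) proof for the first two bounds: the paper likewise applies the Schwarz inequality in $x$, pulls out the $L^\infty_t$ norm, and closes with Young's inequality for the first bound; and for the second it uses the identical weight factorization $|Pu\,\partial_t u| = \left(\inprod{x}^{1/2}|Pu|\right)\left(\inprod{x}^{-1/2}|\partial_t u|\right)$ followed by the $\ell^1$--$\ell^\infty$ H\"older pairing over dyadic annuli. The paper does not write out the third bound, and your plan of splitting $Pu = f_1 + f_2$, treating $f_1$ by the $LE^*$--$LE$ pairing and $f_2$ by the $L^1_tL^2_x$--$L^\infty_tL^2_x$ pairing, then taking the infimum over decompositions, is the right one.

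One remark on your parenthetical claim that ``the case $\varepsilon > 1$ is weaker than the first estimate'': this is not accurate. Since the sum-space norm is an infimum, $\norm{Pu}_{LE^*+L^1_tL^2_x}\leq\norm{Pu}_{L^1_tL^2_x}$, and for $\varepsilon>1$ the coefficient $\varepsilon^{-1}<1$ further shrinks that term, so the stated inequality for $\varepsilon>1$ is \emph{stronger} than the first estimate on that portion of the right-hand side (though it compensates with a larger $\varepsilon\norm{u}_{LE^1}$ term); neither bound implies the other. The absorption step in your argument — pairing $\norm{f_2}_{L^1_tL^2_x}\norm{\partial u}_{L^\infty_tL^2_x}$ via Young while keeping the coefficient on $\norm{\partial u}_{L^\infty_tL^2_x}^2$ small enough to absorb into the left — only yields the claimed $\varepsilon^{-1}$ prefactor on the $L^1_tL^2_x$ piece when $\varepsilon$ is bounded above by a fixed constant. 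That is the regime in which the estimate is used, so this is a minor point, but the dismissal via the first estimate is not the right justification.
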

\begin{proof} By Proposition \ref{first ee}, we have the estimate $$\norm{\partial u(t)}_{ L^2}^2\lesssim\norm{\partial u(0)}_{L^2}^2+ \int\limits_0^T\int\limits_{\R^3} |Pu\partial_t u|\, dxdt,\qquad 0\leq t\leq T.$$ To obtain the first estimate that we claimed, one applies the Schwarz inequality, takes a supremum in time of $\partial_t u$, and uses Young's inequality for products. 
To obtain the second estimate, write $$|Pu\ \partial_t u|=\left(\inprod{x}^{1/2}|Pu|\right)\left(\inprod{x}^{-1/2} |\partial_t u|\right)$$ in (\ref{starting ee}) and apply the Schwarz inequality and H\"older's inequality applied to $\ell^1$ with conjugate exponents $(p,q)=(1,\infty)$. 
\end{proof} 

Next, we cite an exterior estimate from \cites{mst20} (see \cites{MMT08} for a similar result for the Schr\"odinger equation).  Outside of a large enough compact spatial set, the operator $P$ is a small $AF$ perturbation of $\Box$, in which case we obtain good energy estimates with a necessary truncation error (here, we must measure the energy at time $T$, which is non-problematic in view of Corollary \ref{uniform ee}). 
\begin{prop}[Proposition 3.2 in \cites{mst20}] \label{exterior ee 2}
If $P$ is asymptotically flat and $R\geq R_0$, then \begin{align}\label{exterior ee 2 eqn} \norm{u}_{LE^1_{>R}}\lesssim\norm{\partial u(0)}_{L^2_{>R}}+\norm{\partial u(T)}_{L^2_{>R}}+R^{-1}\norm{u}_{LE_R}+\norm{Pu}_{LE^*_{>R}}.\end{align}
\end{prop}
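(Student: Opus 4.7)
The plan is to prove this via a positive commutator argument using a Morawetz-type multiplier adapted to the exterior region, mirroring the classical construction on Minkowski space but tailored to the asymptotically flat geometry. Concretely, I would work with a multiplier of the form
$$Qu = \chi_{>R/2}(|x|)\Bigl[f(r)\,\frac{x^j}{|x|}\,g^{ij}(x)\,\partial_i u \,+\, \tfrac{1}{2}h(r)\,u\Bigr],$$
where $f(r)$ is a bounded non-decreasing radial function tending to a positive constant as $r\to\infty$ and $h(r)$ is a related lower-order weight. The key point is that on $\{|x|>R_0\}$ the damping contributes nothing ($\supp a \subset \{|x|\le R_0\}$ and $R\ge R_0$), so only the self-adjoint part $\Box_g$ enters into the commutator computation on the support of $\chi_{>R/2}$.

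First, I would pair $Pu$ with $Qu$ in $L^2_{t,x}([0,T]\times\R^3)$ and integrate by parts in both time and space, producing a Morawetz-type identity
$$2\operatorname{Re}\inprod{Pu,Qu} \,=\, \bigl[\text{boundary at }t=0,T\bigr] + \inprod{Bu,Du}_{t,x} + \text{errors},$$
where $B$ is a positive quadratic form in $\partial u$ and $u/|x|$. The $t$-boundary terms, by coercivity of the energy functional on $\{|x|>R/2\}$, are bounded by $\norm{\partial u(0)}_{L^2_{>R/2}}^2 + \norm{\partial u(T)}_{L^2_{>R/2}}^2$. With $f,h$ chosen so that $f'(r)$ and $-h'(r)$ are comparable to positive slowly-varying radial weights tied to the dyadic sequence $(c_j)$ (the construction originating in \cites{SR05} and adapted to the $AF$ setting in \cites{MT12}), the commutator quadratic form controls $\norm{u}_{LE^1_{>R}}^2$.

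The error terms split into three families, each absorbed in a standard way. (i) Terms involving $g-m$ are quadratic in $\partial u$ with coefficients decaying like the $AF_{>R}$ norm; since $\norm{g-m}_{AF_{>R}}\le \mathbf{c}\ll 1$, these absorb directly into the positive commutator on the left. (ii) Terms arising when derivatives fall on the cutoff $\chi_{>R/2}$ are supported in $\{|x|\approx R\}$ and yield, after Cauchy-Schwarz, contributions bounded by $R^{-1}\norm{u}_{LE_R}\cdot(\text{energy})$, which produces the $R^{-1}\norm{u}_{LE_R}$ summand. (iii) The forcing pairing is estimated by the $LE^1$--$LE^*$ duality as $|\inprod{Pu,Qu}|\lesssim \norm{Pu}_{LE^*_{>R}}\norm{u}_{LE^1_{>R}}$, then absorbed into the left via Young's inequality.

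The main obstacle is engineering $(f,h)$ so that the principal commutator is genuinely positive-definite in both $|\partial u|^2$ and $|u|^2/|x|^2$ simultaneously, with weights matching the sup-over-dyadic-regions structure of the $LE^1$ norm rather than the classical fixed weight $\inprod{x}^{-1}$. This requires blending the exponentiated slowly-varying construction of Proposition \ref{bootstrap fcn}--type weights (to handle large-$AF$ perturbations on each dyadic shell) with a standard Morawetz ``extra radial decay'' function; executing this carefully is precisely the content of Proposition 3.2 in \cites{mst20}, which is why I would invoke it rather than re-derive it here.
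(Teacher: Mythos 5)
Your proposal is correct and takes essentially the same approach as the paper: both sketch a Morawetz-type positive commutator argument on the exterior region (where the damping vanishes since $\supp a\subset\{|x|\leq R_0\}$ and $R\geq R_0$) with a cutoff-localized radial multiplier plus a lower-order correction, and both ultimately cite Proposition 3.2 of \cites{mst20} for the detailed execution. One small clarification: the weight actually used there is $f(|x|)=|x|/(|x|+2^j)$ with $2^j\geq R$ — the classical small-perturbation Morawetz choice, which suffices because $\norm{g-m}_{AF_{>R}}\ll 1$ in this region — rather than the exponentiated $(c_j)$-weight of Proposition~\ref{bootstrap fcn}; that heavier construction is reserved in this paper for the escape function symbol $q_{out}^\pm$ in Lemma~\ref{non-trap}, where one must absorb errors that are not uniformly small.
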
 Since the damping is identically zero in this region, the result holds without any modification to the proof given in \cites{mst20}. For this reason, we will omit the details here, although we will provide the overarching idea: Their proof is a positive commutator argument using the multiplier $Q_1+Q_2$, where $$Q_1=\chi_{>2R}(|x|)f(|x|)\frac{x_j}{|x|}g^{jk} D_k+D_k\chi_{>2R}(|x|)f(|x|)\frac{x_j}{|x|}g^{jk}$$ is the principal term, and  $$Q_2=\chi_{>2R}(|x|)f'(|x|)$$ is the lower-order correction term. Here, $ f(|x|)=\displaystyle{\frac{|x|}{|x|+2^j}}$, and $j$ is chosen so that $2^j\geq R.$

Before moving on to proving Theorem \ref{high freq est}, we will simplify its proof through case reductions. As in Section 4 of \cites{mst20}, one can readily reduce to the case of $u$ having zero Cauchy data at times $0$ and $T$, as well as $f\in LE^*_c$. To do this, one constructs an approximate solution to a problem with the same data and forcing with a wave operator which is a small $AF$ perturbation of $\Box$ and agrees with $P$ for $|x|>R_0,$ then considers approximations using a unit time interval partition of unity and matching the initial (respectively final time) Cauchy data on the first
(respectively last) solution granted by the partition. In view of these reductions, it is enough to establish \begin{equation}\label{goal est}
     \norm{u}_{LE^1[0,T]}\lesssim \norm{\inprod{x}^{-2} u}_{LE[0,T]}+\norm{Pu}_{LE_c^*[0,T]}
\end{equation} for all $u\in\mathcal{W}_T$ satisfying that $u[0]=u[T]=0$ in order to prove Theorem \ref{high freq est}. The implicit constant in (\ref{goal est}) is still independent of $T$. 

We will explicitly establish an additional reduction, namely to solutions with compact spatial support, using Proposition \ref{exterior ee 2}.
\vskip .1in
 \noindent\textbf{Claim.} It suffices to prove (\ref{goal est}), and hence Theorem \ref{high freq est}, for $u$ supported in $\{|x|\leq   2R_0\}$. 
 \begin{proof}
Write  $u=\chi_{< R_0}u+\chi_{>R_0}u.$ On the exterior piece $\chi_{>R_0}u$, we apply Proposition \ref{exterior ee 2} and Corollary \ref{uniform ee} to get that \begin{align*}
    \norm{\chi_{>R_0}u}_{LE^1[0,T]}\lesssim R_0^{-1}\norm{\chi_{>R_0}u}_{LE_{R_0}[0,T]}+\norm{P(\chi_{>R_0}u)}_{LE^*_{>R_0}[0,T]}.\end{align*}
The first term on the right is directly bounded by $\norm{u}_{LE^1_{R_0}[0,T]}.$ 
For the second term, we write $$P(\chi_{>R_0}u)=(\chi_{>R_0})Pu+[P,\chi_{>R_0}]u,$$ and one can calculate that 
$$[P,\chi_{>R_0}]u(t,x)=\mathcal{O}(R_0^{-1})\chi'\left(\frac{|x|}{R_0}\right)\partial u(t,x)+\mathcal{O}(R_0^{-2})\chi''\left(\frac{|x|}{R_0}\right)u(t,x).$$  
In $LE^*$, this term bounded by $\norm{u}_{LE^1_{R_0\leq |\cdot|\leq 2R_0}[0,T]}$, and so 
we have $$\norm{\chi_{>R_0}u}_{LE^1[0,T]}\lesssim \norm{u}_{LE^1_{R_0}[0,T]}.$$
Suppose that (\ref{goal est}) holds for $\chi_{< R_0}u.$ From this, we get the estimate $$\norm{\chi_{< R_0} u}_{LE^1[0,T]}\lesssim \norm{\inprod{x}^{-2}\chi_{< R_0}u}_{LE[0,T]}+\norm{Pu}_{LE^*_c[0,T]}+\norm{[P,\chi_{< R_0}]u}_{LE^*_c[0,T]},$$ and so 
\begin{align*}
\norm{u}_{LE^1[0,T]}&\leq \norm{\chi_{< R_0}u}_{LE^1[0,T]}+\norm{\chi_{>R_0}u}_{LE^1[0,T]}\\
&\lesssim \norm{\inprod{x}^{-2} u}_{LE[0,T]}+\norm{Pu}_{LE^*_c[0,T]}+\norm{u}_{LE^1_{R_0\leq |\cdot|\leq 2R_0}[0,T]}.
\end{align*} The last term is readily estimated via Proposition \ref{exterior ee 2}, which establishes (\ref{goal est}). 
 \end{proof} 

 We record the results of these case reductions in the following proposition.
\begin{prop}\label{case red prop}
 In order to establish Theorem \ref{high freq est}, it is sufficient to prove the estimate 
 \begin{align}\label{simplified est}
     \norm{v}_{LE^1[0,T]}\lesssim \norm{v}_{L^2_tL^2_x[0,T]}+\norm{Pv}_{LE^*[0,T]}
 \end{align} for $v$ supported in  $\{|x|\leq 2R_0\}$ with $v[0]=v[T]=0.$
\end{prop}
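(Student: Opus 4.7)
The plan is to chain together the two reductions developed in the excerpt and then collapse the weighted norms using the compact support hypothesis. First, I will invoke the initial-data and forcing reductions from Section 4 of \cites{mst20}: by subtracting an approximate solution matching the Cauchy data at $t=0$ and $t=T$ for an operator which agrees with $P$ for $|x|>R_0$ and is a small $AF$ perturbation of $\Box$, and by partitioning the time interval into unit-length pieces with matched Cauchy data across them, the high-frequency estimate (\ref{high freq LED}) reduces to (\ref{goal est}) for $u\in\mathcal{W}_T$ with $u[0]=u[T]=0$ and $Pu\in LE^*_c[0,T]$. The constants remain independent of $T$ because each reduction step costs only a uniformly-bounded amount of uniform energy, controlled by Corollary \ref{uniform ee}; the damping term $iaD_t$ does not interfere since $a\in C_c^\infty$ preserves both the asymptotic flatness of $P$ and the perturbative structure used in \cites{mst20}.

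Second, I will apply the compact-support cutoff established in the Claim preceding the proposition. Splitting $u=\chi_{<R_0}u+\chi_{>R_0}u$, Proposition \ref{exterior ee 2} applied to the exterior piece, together with the commutator computation for $[P,\chi_{>R_0}]$ whose kernel is supported in $\{R_0\leq|x|\leq 2R_0\}$, yields $\norm{\chi_{>R_0}u}_{LE^1[0,T]}\lesssim \norm{u}_{LE^1_{R_0}[0,T]}$. Applying (\ref{goal est}) to the interior piece $v=\chi_{<R_0}u$, which is supported in $\{|x|\leq 2R_0\}$, and absorbing the residual annular contribution via a second application of Proposition \ref{exterior ee 2} (using $u[0]=u[T]=0$ to kill the endpoint energies and $Pu\in LE^*_c$ to handle the forcing), closes (\ref{goal est}) in general from its restriction to compactly-supported $v$.

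Third, I will use the compactness of the spatial support of $v$ to eliminate the weight. Since $\inprod{x}^{-2}\approx 1$ uniformly on $\{|x|\leq 2R_0\}$, we have the equivalence $\norm{\inprod{x}^{-2}v}_{LE[0,T]}\approx \norm{v}_{L^2_tL^2_x[0,T]}$; likewise, $\norm{Pv}_{LE^*_c[0,T]}$ and $\norm{Pv}_{LE^*[0,T]}$ are comparable because $Pv$ inherits support in $\{|x|\leq 2R_0\}$. Substituting these equivalences into (\ref{goal est}) collapses its right-hand side to that of (\ref{simplified est}), yielding the proposition. The main obstacle, were one reconstructing this chain from scratch rather than recording the work already carried out in the excerpt, would lie in the second step: verifying that the commutator error genuinely sits in the exterior annulus and that Proposition \ref{exterior ee 2} absorbs it without picking up $T$-dependence. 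All such verifications have been done explicitly in the preceding Claim, so the proof of the proposition itself amounts to simply aggregating these reductions into one statement.
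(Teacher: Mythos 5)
Your proposal correctly reconstructs the paper's three-stage reduction: the Section-4 reductions from \cites{mst20} to zero Cauchy data at $t=0,T$ and compactly supported forcing, the Claim's spatial cutoff argument via $\chi_{<R_0}$, $\chi_{>R_0}$ and Proposition \ref{exterior ee 2} to localize to $\{|x|\leq 2R_0\}$, and the compact-support equivalence $\norm{\inprod{x}^{-2}v}_{LE}\approx_{R_0}\norm{v}_{L^2_tL^2_x}$ (with $\norm{Pv}_{LE^*_c}=\norm{Pv}_{LE^*}$) to collapse (\ref{goal est}) into (\ref{simplified est}). This is essentially the same chain of reductions recorded in the paper, so the proof is correct.
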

Again, this implicit constant is independent of $T$ but will depend on $R_0$. Using the compact support of $v$ to transition between the weighted and unweighted spaces will inherently generate multiplication by powers of $R_0$, but this does not matter since the constant in the above may depend on such a parameter.

\subsection{Proof of the High Frequency Estimate}
\label{thm section}
Armed with the established case reductions, we will proceed with a proof of Theorem \ref{high freq est}. Recall that it is equivalent to analyze the scaled problem. 

\begin{proof}[Proof of Theorem \ref{high freq est}] 
We will break this proof into a sequence of steps. 

\vskip .1in
\noindent \textbf{Step 1: Setting up the Positive Commutator and the Frequency Decomposition}. First, we remark that in view of Proposition \ref{case red prop}, it will suffice to prove (\ref{simplified est}) for $v$ supported in $\{|x|<2R_0\}$ with $v[0]=v[T]=0.$ We can extend $v$ by zero to be defined for $t\in\R$ and vanish for $t\notin (0,T).$ Then,
\begin{align}\label{comm}  2\text{Im}
\inprod {P v, \left(q^{\operatorname{w}}-\frac{i}{2}m^{\operatorname{w}}\right)v} +\frac{i\gamma }{2}\inprod{ [a D_t, m^{\operatorname{w}}]v,v}&=\inprod{i[\Box_{g},q^{\operatorname{w}}]v,v}+\gamma\inprod{(q^{\operatorname{w}}a  D_t+a D_t q^{\operatorname{w}})v,v}\\
&\qquad\qquad+\frac{1}{2}\inprod{(\Box_{g}m^{\operatorname{w}}+m^{\operatorname{w}}\Box_{g})v,v}\nonumber. \end{align} 
The right-hand side of (\ref{comm}) can be written as 
\begin{align*}
   \inprod{i[\Box_{g},q^{\operatorname{w}}]v,v}&+\gamma\inprod{(q^{\operatorname{w}}a  D_t+a D_t q^{\operatorname{w}})v,v}+\frac{1}{2}\inprod{(\Box_{g}m^{\operatorname{w}}+m^{\operatorname{w}}\Box_{g})v,v} \\
   &=\inprod{(H_{p}q+2\gamma\tau a q+mp)^{\operatorname{w}} v,v}+\inprod{ A_0 v,v},
\end{align*}
where $A_0\in \Psi^0$. Recall that $2\gamma\tau a=-2is_{skew}.$ By choosing $\gamma>0$ large enough, we can apply Lemma \ref{escape} to get \begin{align}\label{pre gard} H_{p}q-2is_{skew} q+mp-C\mathbbm{1}_{|\xi|\geq\lambda}\langle x\rangle^{-2}(|\xi|^2+\tau^2)\geq 0,
\end{align} 
where $C>0$ is the implicit constant in Lemma \ref{escape}. We can readily replace $\mathbbm{1}_{|\xi|\geq \lambda}$ with $\chi_{|\xi|>\lambda}$. 

We will split $v$ into frequency components via $v=v_{>>\lambda}+v_{<<\lambda}$, where
\begin{align*}   v_{>>\lambda}&=\chi_{|\xi|+|\tau|>\lambda}(\partial)v,\\
    v_{<<\lambda}&=\chi_{|\xi|+|\tau|<\lambda}(\partial)v.
\end{align*}  Since the desired estimate is a high frequency estimate, we will first analyze the high frequency components of $v$.

\vskip .1in
\noindent \textbf{Step 2a: High Frequencies - Applying the G\r{a}rding inequality}. By (\ref{pre gard}), we may apply the sharp G\r{a}rding inequality to obtain that \begin{align*}\inprod{(H_{p}q-2is_{skew} q+mp)^{\operatorname{w}} v_{>>\lambda},v_{>>\lambda}}\gtrsim \inprod{\left(\chi_{|\xi|>\lambda}\langle x\rangle^{-2}(|\xi|^2+\tau^2)\right)^{\operatorname{w}} v_{>>\lambda},v_{>>\lambda}}-\norm{v_{>>\lambda}}_{H^{1/2}_{t,x}}^2.
\end{align*}
We remark that the implicit constant may be chosen independently of $\lambda$ since there is a $\chi_{|\xi|>\lambda}$ cutoff embedded into $q$ and $m$, and hence differentiation occurring in asymptotic expansion calculations possess coefficients which are either independent of $\lambda$ or feature inverse powers of $\lambda$ (one can also entirely ignore the potential $\lambda$ dependence and argue via Cauchy-Schwarz and Young's inequality for products, although this introduces more parameters to track). 

Since $\chi_{|\xi|+|\tau|<\lambda}\in S^{-\infty}$, it follows that $$\inprod{(H_{p}q-2is_{skew} q+mp)^{\operatorname{w}} v,v}=\inprod{(H_{p}q-2is_{skew} q+mp)^{\operatorname{w}} v_{>>\lambda},v_{>>\lambda}}+\inprod{S_0v,v},$$ where $S_0\in\Psi^{-\infty}.$ In particular,
\begin{align}
\label{gard app}
\inprod{(H_{p}q-2is_{skew} q+mp)^{\operatorname{w}} v,v}\gtrsim \inprod{\left(\chi_{|\xi|>\lambda}\langle x\rangle^{-2}(|\xi|^2+\tau^2)\right)^{\operatorname{w}} v_{>>\lambda},v_{>>\lambda}}-\norm{v_{>>\lambda}}_{H^{1/2}_{t,x}}^2+\inprod{S_0v,v}.
\end{align}

Using the pseudodifferential composition formula, we compute that \begin{align}\label{inter gard}
\left(\chi_{|\xi|>\lambda}\langle x\rangle^{-2}(|\xi|^2+\tau^2)\right)^{\operatorname{w}}=(\chi_{|\xi|>\lambda}(D_x))^{1/2}D_\alpha\langle x\rangle^{-2} D_\alpha (\chi_{|\xi|>\lambda}(D_x))^{1/2}+A_1,\end{align} where $A_1\in \Psi^1$ arises from non-principal terms in the asymptotic expansion of the Moyal product (and the expansion features terms which are either independent of $\lambda$ or involve inverse powers of $\lambda$). 
Integrating by parts once gives that 
\begin{align}\label{gard err}
\inprod{\left(\chi_{|\xi|>\lambda}\langle x\rangle^{-2}(|\xi|^2+\tau^2)\right)^{\operatorname{w}}v_{>>\lambda},v_{>>\lambda}}&=\norm{\inprod{x}^{-1}\partial v_{>\lambda}}_{L^2_tL^2_x}^2+\inprod{{A}_1v_{>>\lambda},v_{>>\lambda}}\\
\nonumber
&\gtrsim\norm{\partial v_{>\lambda}}_{LE_{<2R_0}}^2+\inprod{{A}_1v_{>>\lambda},v_{>>\lambda}},
\end{align} 
where $v_{>\lambda}=\chi_{|\xi|>\lambda}(D_x)v$ and $A_1\in\Psi^1$ is now a modification of the previous version of the same variable in (\ref{inter gard}) (to-be-explained momentarily). One might expect the term  $\partial\left((\chi_{|\xi|>\lambda}(D_x))^{1/2})v_{>>\lambda}\right)$ to appear instead of $\partial v_{>\lambda}$, but it is readily seen that $$\left(\chi_{|\xi|>\lambda}(|\xi|)\right)^{1/2}\chi_{|\xi|+|\tau|>\lambda}(|(\tau,\xi)|)\approx \chi_{|\xi|>\lambda}(|\xi|)\chi_{|\xi|+|\tau|>\lambda}(|(\tau,\xi)|)=\chi_{|\xi|>\lambda}(|\xi|).$$ In particular, the $\tau$ has no effect on the resulting cutoff, and $\chi,\chi^{1/2}$ are both smooth, non-decreasing, and have the same support properties (and only differ on a compact set). The only effect in exchanging these terms is modifying $A_1$ in order the errors resulting from this switch; hence, the $A_1$ in (\ref{gard err}) is different than in (\ref{inter gard}). For this reason, none of our analysis changes by working with $v_{>\lambda}$, and we will stick with this for notational convenience.

After incorporating (\ref{gard err}) into (\ref{gard app}), we have  that
\begin{align}\label{rhs gard err}
    \inprod{(H_{p}q-2is_{skew} a q+mp)^{\operatorname{w}} v,v}+\inprod{A_0 v,v}&\gtrsim\norm{\partial v_{>\lambda}}_{LE_{<2R_0}}^2 
    -\norm{v_{>>\lambda}}_{H^{1/2}_{t,x}}^2\\  \nonumber 
    &\qquad-\left|\inprod{A_1 v_{>>\lambda},v_{>>\lambda}}\right|-\left|\inprod{A_0 v,v}\right|-\left|\inprod{S_0 v,v}\right|.
\end{align}

\vskip .1in
\noindent \textbf{Step 2b: High Frequencies - Handling the Error Terms in (\ref{rhs gard err})}. We will first analyze the term $\inprod{A_1 v_{>>\lambda},v_{>>\lambda}}.$  Since $A_1\in \Psi^1,$ it is bounded from $H^1_{t,x}$ to $L^2_tL^2_x$ (and the operator norm will yield no positive-power $\lambda$ contributions due to the previous comment on the asymptotic expansion of the symbol). By using the Schwarz inequality and this mapping property, we have that  
\begin{align}\label{order 1 pdo}\left|\inprod{A_1 v_{>>\lambda},v_{>>\lambda}}\right|\lesssim\norm{v_{>>\lambda}}_{H^1_{t,x}}\norm{v_{>>\lambda}}_{L^2_tL^2_x}.
\end{align}
Using Plancherel's theorem in $(t,x)$, the frequency localization, and the compact support of $v$, we obtain the bounds
\begin{align}\label{order 1 pdo 1}
\norm{v_{>>\lambda}}_{H^1_{t,x}}&\lesssim \norm{\inprod{(\tau,\xi)}\chi_{|\xi|+|\tau|>\lambda}\hat{v}}_{L^2_\tau L^2_\xi}\lesssim \norm{\inprod{(\tau,\xi)}\hat{v}}_{L^2_\tau L^2_\xi}=\norm{v}_{H^1_{t,x}}\lesssim\norm{v}_{LE^1},
\end{align} and
\begin{align}\label{order 1 pdo 2}
   \norm{v_{>>\lambda}}_{L^2_tL^2_x}&\approx\norm{\chi_{|\xi|+|\tau|>\lambda}\hat{v}}_{L^2_\tau L^2_\xi}\lesssim \norm{\frac{|\tau|+|\xi|}{\lambda}\chi_{|\xi|+|\tau|>\lambda}\hat{v}}_{L^2_\tau L^2_\xi}\lesssim\lambda^{-1}\norm{\partial v}_{L^2_tL^2_x}\lesssim\lambda^{-1}\norm{v}_{LE^1}.
\end{align} 

Applying (\ref{order 1 pdo 1}) and (\ref{order 1 pdo 2}) to (\ref{order 1 pdo}) yields that
$$ \left|\inprod{A_1 v_{>>\lambda},v_{>>\lambda}}\right|\lesssim \lambda^{-1}\norm{v}_{LE^1}^2.$$ 
For the term $\norm{v_{>>\lambda}}_{H^{1/2}_{t,x}}^2$, note that
\begin{align*}
    \norm{v_{>>\lambda}}_{H^{1/2}_{t,x}}^2&\lesssim \norm{\inprod{(\tau,\xi)}^{1/2}\chi_{|\xi|+|\tau|>\lambda}\hat{v}}_{L^2_\tau L^2_\xi}^2=\norm{\inprod{(\tau,\xi)}^{-1/2}\inprod{(\tau,\xi)}\chi_{|\xi|+|\tau|>\lambda}\hat{v}}_{L^2_\tau L^2_\xi}^2\\
    &\lesssim \lambda^{-1}\norm{\inprod{(\tau,\xi)}\chi_{|\xi|+|\tau|>\lambda}\hat{v}}_{L^2_\tau L^2_\xi}^2\lesssim\lambda^{-1}\norm{v}_{LE^1}^2.
\end{align*} 
For the $\inprod{A_0v,v}$ term, we can use $L^2$-boundedness and the compact support of $v$ to get  
\begin{align}\label{zero order pdo}
\left|\inprod{A_0 v,v}\right|\leq\norm{A_0v}_{L^2_tL^2_x}\norm{v}_{L^2_tL^2_x}\lesssim C(\lambda)  \norm{v}_{L^2_tL^2_x}^2.
\end{align} While this bound is $\lambda$-dependent, such terms appear on the \textit{upper bound side} of the desired inequality, and hence can depend on $\lambda$ in an arbitrary manner (as opposed to the $LE^1$ terms which need an inverse power of $\lambda$ for bootstrapping). The meaning of $C(\lambda)$ will change fluidly, just as one continuously re-notates a potentially-changing constant by $C$ when calculating successive inequalities.

The smoothing term $\inprod{S_0v,v}$ can be bounded in the same way as $\inprod{A_0v,v}$ (in particular, $S_0\in \Psi^0$). Thus, we have the lower bound
\begin{align}\label{pos com 1}
    \inprod{(H_{p}q-2is_{skew} q+mp)^{\operatorname{w}} v,v}+\inprod{A_0 v,v}&\gtrsim \norm{\partial v_{>\lambda}}_{LE_{<2R_0}}^2- C(\lambda)\norm{v}_{L^2_t L^2_x}^2
    -\lambda^{-1}\norm{v}_{LE^1}^2.
\end{align} Next, we look at the left-hand side of (\ref{comm}). 
\vskip .1in
\noindent \textbf{Step 3: Bounding the Left-Hand Side of (\ref{comm})}.
Since $[aD_t,m^{\operatorname{w}}]\in \Psi^0$, performing the same work as in (\ref{zero order pdo}) provides that 
\begin{align}\label{pos com 2}
\left|\frac{i\gamma}{2}\inprod{[a D_t, m^{\operatorname{w}}]v,v}\right|\lesssim C(\lambda)\norm{v}_{L^2_tL^2_x}^2.
\end{align} For remaining term on the left-hand side of (\ref{comm}), we split $v$ into high and low frequency components once again to get that $$2\text{Im}
\inprod {P v, \left(q^{\operatorname{w}}-\frac{i}{2}m^{\operatorname{w}}\right)v}= 2\text{Im}
\inprod {P v, \left(q^{\operatorname{w}}-\frac{i}{2}m^{\operatorname{w}}\right)v_{>>\lambda}}+\inprod{S_1 v,v},$$
where $S_1\in \Psi^{-\infty}.$ We have already demonstrated how to bound smoothing operator terms. For the other (primary) piece, we apply the Schwarz inequality, use the $\Psi$DO mapping properties of $q^{\operatorname{w}}\in \Psi^1$ and $m^{\operatorname{w}}\in \Psi^0$, and leverage the compact support of $v$ (just as performed previously) to get 
\begin{align} \label{pos com 3}
    \left|2\text{Im}
\inprod {P v, \left(q^{\operatorname{w}}-\frac{i}{2}m^{\operatorname{w}}\right)v_{>>\lambda}}\right|\lesssim C(\lambda)\norm{P v}_{L^2_tL^2_x}\norm{v}_{LE^1}\lesssim C(\lambda) \norm{P v}_{LE^*_c}\norm{v}_{LE^1}.
\end{align}

\vskip .1in
\noindent \textbf{Step 4: Combining the Established Bounds Into a High Frequency Bound}.
Putting (\ref{comm}), (\ref{pos com 1}), (\ref{pos com 2}), and (\ref{pos com 3}) together, we obtain that   
\begin{align*}
    \norm{\partial v_{>\lambda}}_{LE_{<2R_0}}&\lesssim C(\lambda)\left(\norm{P v}_{LE^*}^{1/2}\norm{v}_{LE^1}^{1/2}+\norm{v}_{L^2_t L^2_x}\right)+\lambda^{-1/2}\norm{ v}_{LE^1}.
\end{align*} Completing the $LE^1_{<2R_0}$ norm on the left-hand side of the above,
\begin{align*}
    \norm{ v_{>\lambda}}_{LE^1_{<2R_0}}&\lesssim C(\lambda)\left(\norm{P v}_{LE^*}^{1/2}\norm{v}_{LE^1}^{1/2}+\norm{v}_{L^2_tL^2_x}\right)+\lambda^{-1/2}\norm{v}_{LE^1}+\norm{\inprod{x}^{-1}v_{>\lambda}}_{LE}.
\end{align*} We note that $$\norm{\inprod{x}^{-1}v_{>\lambda}}_{LE}\lesssim \norm{v}_{L^2_tL^2_x},$$ once again using Plancherel's theorem.  Thus,   
\begin{align}\label{int high freq}
\norm{ v_{>\lambda}}_{LE^1_{<2R_0}}\lesssim C(\lambda)\left(\norm{P v}_{LE^*}^{1/2}\norm{v}_{LE^1}^{1/2}+\norm{v}_{L^2_t L^2_x}\right)+\lambda^{-1/2}\norm{v}_{LE^1}.    
\end{align} This establishes an estimate on the high frequencies. We must add in the lower frequencies to the left-hand side. That is, we must add $\norm{ v_{<\lambda}}_{LE^1_{<2R_0}}$ to both sides of (\ref{int high freq}).

\vskip .1in
\noindent \textbf{Step 5a: Lower Frequencies - Further Frequency Splitting and Bounding the Low-Low Term}.
 First, we get the bound
$$\norm{\inprod{x}^{-1}v_{<\lambda}}_{LE}\lesssim \norm{v}_{L^2_tL^2_x} $$
 via Plancherel's theorem.
For the term $\norm{\partial v_{<\lambda}}_{LE}$, we write 
\begin{equation*}v_{<\lambda}=v_{<>\sigma\lambda}+v_{<<\sigma\lambda},
\end{equation*}
 where 
\begin{align*}
    v_{<>\sigma\lambda}&=\chi_{|\xi|<\lambda}(D_x)\chi_{|\tau|> \sigma\lambda}(D_t) v,\\
    v_{<<\sigma\lambda}&=\chi_{|\xi|<\lambda}(D_x)\chi_{|\tau|<\sigma\lambda}(D_t) v,
\end{align*}
and 
$\sigma\gg 1$ will be chosen later (and does not denote the same $\sigma$ as used in the construction of the escape function). Applying Plancherel's theorem, frequency localization, and the compact support of $v$ again yields
\begin{align*}
    \norm{\partial v_{<<\sigma\lambda}}_{LE}\lesssim \norm{(|\tau|+|\xi|)\chi_{|\xi|<\lambda}\chi_{|\tau|<\sigma\lambda}\hat{v}}_{L^2_\tau L^2_\xi}\lesssim\sigma\lambda\norm{v}_{L^2_tL^2_x}.
\end{align*}

\vskip .1in
\noindent \textbf{Step 5b: Lower Frequencies - Bounding the Low-High Term}.
 For $v_{<>\sigma\lambda},$ we compute that 
\begin{align} \label{mixed freq piece}
    \norm{\partial v_{<>\sigma\lambda}}_{LE}\lesssim \norm{(|\tau|+|\xi|)\chi_{|\xi|<\lambda}\chi_{|\tau|>\sigma\lambda}\hat{v}}_{L^2_\tau L^2_\xi}\lesssim\lambda\norm{v}_{L^2_t L^2_x}+(\sigma\lambda)^{-1}\norm{(\partial_t^2v)_{<>\sigma\lambda}}_{L^2_t L^2_x}.
\end{align} For the last term on the right, we utilize the expression for $Pv$ to write 
\begin{align}\label{low freq cont err}
    \norm{(\partial_t^2v)_{<>\sigma\lambda}}_{L^2_t L^2_x}&\lesssim \norm{(Pv)_{<>\sigma\lambda}}_{L^2_tL^2_x}+\norm{\left((g^{0j}D_j+D_jg^{0j})D_t v\right)_{<>\sigma\lambda}}_{L^2_tL^2_x}\\
 \nonumber  &\qquad+\norm{(D_ig^{ij}D_jv)_{<>\sigma\lambda}}_{L^2_tL^2_x}+\norm{(aD_tv)_{<>\sigma\lambda}}_{L^2_tL^2_x}.
\end{align}
One can readily check that \begin{align}\label{low freq cont 1}
\norm{(Pv)_{<>\sigma\lambda}}_{L^2_tL^2_x}\lesssim\norm{Pv}_{LE^*},
\end{align}
and \begin{align}\label{low freq cont 2}
\norm{(aD_tv)_{<>\sigma\lambda}}_{L^2_tL^2_x}\lesssim \norm{\partial v}_{LE}.
\end{align} For the other terms, we note that as \textit{functions}, one has that $g^{\alpha j}, D_jg^{\alpha j}\in S^0$ for all $\alpha\in \{0,1,2,3\}$ and $j\in \{1,2,3\}$, and so $$[\chi_{|\xi|<\lambda}(D_x)\chi_{|\tau|>\sigma\lambda}(D_t),g^{\alpha j}]\in \Psi^{-1},\qquad
[\chi_{|\xi|<\lambda}(D_x)\chi_{|\tau|>\sigma\lambda}(D_t),D_jg^{\alpha j}]\in \Psi^{-1}.$$ 
In particular, the above two operators are bounded on $L^2_tL^2_x.$ Pairing this with the fact that Fourier multipliers commute, we have that
\begin{align} \label{low freq cont 3}
    \norm{\left((g^{0j}D_j+D_jg^{0j})D_t v\right)_{<>\sigma\lambda}}_{L^2_tL^2_x}&\lesssim \norm{(D_jg^{0j})\left(D_t v\right)_{<>\sigma\lambda}}_{L^2_tL^2_x}+\norm{g^{0j}\left(D_jD_t v\right)_{<>\sigma\lambda}}_{L^2_tL^2_x}\\
   \nonumber &\qquad+\norm{[\chi_{|\xi|<\lambda}(D_x)\chi_{|\tau|>\sigma\lambda}(D_t),(D_jg^{0 j})]D_t v}_{L^2_tL^2_x}\\
   \nonumber  &\qquad+\norm{[\chi_{|\xi|<\lambda}(D_x)\chi_{|\tau|>\sigma\lambda}(D_t),g^{0 j}]D_jD_t v}_{L^2_tL^2_x}\\
   \nonumber  &\lesssim \lambda \norm{\partial v}_{LE}+C(\lambda)\norm{v}_{L^2_tL^2_x},
\end{align}
and
\begin{align} \label{low freq cont 4}
    \norm{(D_ig^{ij}D_jv)_{<>\sigma\lambda}}_{L^2_tL^2_x}&\lesssim \norm{(D_ig^{ij})(D_j v)_{<>\sigma\lambda}}_{L^2_tL^2_x}+\norm{g^{ij}(D_iD_j v)_{<>\sigma\lambda}}_{L^2_tL^2_x}\\
   \nonumber  &\qquad+\norm{([\chi_{|\xi|<\lambda}(D_x)\chi_{|\tau|>\sigma\lambda}(D_t),(D_ig^{ij})](D_j v)_{<>\sigma\lambda}}_{L^2_tL^2_x}\\
   \nonumber  &\qquad+\norm{([\chi_{|\xi|<\lambda}(D_x)\beta_{|\tau|\geq\sigma\lambda}(D_t),g^{ij}](D_iD_j v)_{<>\sigma\lambda}}_{L^2_tL^2_x}\\
   \nonumber  &\lesssim C(\lambda)\norm{v}_{L^2_tL^2_x}.
\end{align} Applying (\ref{low freq cont 1})-(\ref{low freq cont 4}) to (\ref{low freq cont err}) gives that
\begin{align*}
\norm{(\partial_t^2v)_{<>\sigma\lambda}}_{L^2_t L^2_x}&\lesssim C(\lambda)\norm{v}_{L^2_tL^2_x}+\lambda\norm{\partial v}_{LE}+\norm{Pv}_{LE^*}.
\end{align*} Plugging the resulting estimate into (\ref{mixed freq piece}) implies that \begin{align*}\norm{\partial v_{<>\sigma\lambda}}_{LE}&\lesssim C(\lambda)\norm{v}_{L^2_tL^2_x}+(\sigma\lambda)^{-1}\norm{Pv}_{LE^*}+\sigma^{-1}\norm{\partial v}_{LE}.
\end{align*}

\vskip .1in
\noindent \textbf{Step 5c: Lower-Frequency - Combining All Lower Frequency Contributions}. Thus, the full low frequency contribution yields
\begin{align}\label{low freq cont full}
\norm{\partial v_{<\lambda}}_{LE}&\lesssim \max\{C(\lambda),\sigma\lambda\}\norm{v}_{L^2_tL^2_x}+(\sigma\lambda)^{-1}\norm{Pv}_{LE^*}+\sigma^{-1}\norm{\partial v}_{LE}\\
\nonumber &\lesssim\max\{C(\lambda),\sigma\lambda\}\norm{v}_{L^2_tL^2_x}+(\sigma\lambda)^{-1}\norm{Pv}_{LE^*}+\sigma^{-1}\norm{v}_{LE^1}.
\end{align}
\vskip .1in
\noindent \textbf{Step 6: Combining the High and Lower-Frequency Bounds}.
Now, we can combine the high frequency work (\ref{int high freq}) with the low frequency work (\ref{low freq cont full}) and apply Young's inequality for products with parameter $\delta>0$ to obtain that
\begin{align*}
    \norm{v}_{LE^1_{<2R_0}}&\lesssim C(\lambda)\norm{P v}_{LE^*}^{1/2}\norm{v}_{LE^1}^{1/2}+\max\{C(\lambda),\sigma\lambda\}\norm{v}_{L^2_tL^2_x}+(\sigma\lambda)^{-1}\norm{Pv}_{LE^*}+\left(\sigma^{-1}+\lambda^{-1/2}\right)\norm{ v}_{LE^1}\\
    &\lesssim\max\{C(\lambda),\sigma\lambda\}\norm{v}_{L^2_tL^2_x}+\left([C(\lambda)]^2\delta^{-1}+(\sigma\lambda)^{-1}\right)\norm{Pv}_{LE^*}+\left(\delta+\sigma^{-1}+\lambda^{-1/2}\right)\norm{v}_{LE^1}
\end{align*} Due to the support of $v$ in $x$, we know that $\norm{v}_{LE^1_{<2R_0}}=\norm{v}_{LE^1}$. Picking $\delta$ sufficiently small and $\lambda,\sigma$ sufficiently large (all of which will depend on $R_0$) allows us to absorb the $\norm{v}_{LE^1}$ term on the right-hand side into the left-hand side, providing (\ref{simplified est}) and completing the proof.
\end{proof}
 \section{Local Energy Decay}\label{LED section}
 In this section, we explain how recovering the high frequency estimate in \cites{mst20} allows us establish local energy decay by appealing to existing estimates in their work (which makes it easier to perform time frequency localization). By an extension argument outlined in Section \ref{ext arg}, it is sufficient to prove to reduce to the case of Schwartz functions, which allows for the removal of data terms. The simplified version of local energy decay for Schwartz functions can be readily proven by combining the proven high frequency estimates with appropriate medium and low frequency estimates then utilizing a time-frequency partition of unity. 
 The medium and low frequency estimates that we require come from the work in \cites{mst20} and do not depend on the trapping nor the damping. We remark that such analyses are also \textit{independent} of the stationarity of $P$. 

 \subsection{Medium Frequencies}\label{med freq}
The goal of the medium frequency estimate is to establish a weighted estimate which implies local energy decay for solutions supported at any range of time frequencies bounded away from both zero and infinity. This is rooted in the notion of a \textit{Carleman estimate}, which is weighted $L^2_tL^2_x$ estimates where the weight is pseudoconvex. 
The constants in our inequalities will depend on the parameter $\textbf{c}$ introduced in Section 1.2, but they will (and must) be independent of the parameters in $\varphi$; the Carleman weights for our estimates are radial. The Carleman weights which we use here are constructed in e.g. \cites{Bo18},  \cites{KT01}.  

The main medium frequency estimate is the following, and the corresponding theorem in \cites{mst20} is Theorem 5.4.
\begin{Th}\label{med freq est}
Let $P$ be an asymptotically flat damped wave operator, and suppose that $\partial_t$ is uniformly time-like. Then, for any $\delta>0$, there exists a bounded, non-decreasing radial weight $\varphi=\varphi(\ln (1+r))$ so that for all $u\in \mathcal{S}(\mathbb{R}^4)$, we have the bound 
\begin{multline}\label{carl4est}
    \norm{(1+\varphi_+'')^{1/2}e^\varphi(\nabla u,\inprod{r}^{-1}(1+\varphi')u}_{LE}+\norm{(1+\varphi')^{1/2}e^\varphi\partial_t u}_{LE}
   \\ \lesssim \norm{e^\varphi Pu}_{LE^*}
    +\delta\left(\norm{(1+\varphi')^{1/2}e^\varphi u}_{LE}+\norm{\inprod{r}^{-1}(1+\varphi_+'')^{1/2}(1+\varphi')e^\varphi \partial_t u}_{LE}\right).
\end{multline}
\end{Th}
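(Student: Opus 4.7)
The plan is to follow the Carleman-estimate strategy of \cites{mst20} (specifically their Theorem 5.4), observing that the damping adds only a compactly supported lower-order correction that can be absorbed with essentially no modification of the underlying computation.

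First, I would conjugate by the weight: set $v = e^{\varphi} u$ and $Q = e^{\varphi} P e^{-\varphi}$. Since $\varphi$ is radial, and in particular independent of $t$, multiplication by $e^{\varphi}$ commutes with $D_t$, so the damping piece $i a D_t$ is unchanged by conjugation and $Q = Q_0 + i a D_t$ where $Q_0 = e^{\varphi} \Box_g e^{-\varphi}$. I would then decompose $Q = Q_r + i Q_s$ into its self- and skew-adjoint parts on $L^2_{t,x}$, so that the full damping $i a D_t$ lands in $i Q_s$. Squaring yields the identity
\begin{equation*}
\norm{Q v}_{L^2_{t,x}}^2 \;=\; \norm{Q_r v}_{L^2_{t,x}}^2 + \norm{Q_s v}_{L^2_{t,x}}^2 + \inprod{i [Q_r, Q_s] v, v}.
\end{equation*}
With the pseudoconvex weight $\varphi = \varphi(\ln(1+r))$ constructed as in \cites{Bo18, KT01}, which is bounded, non-decreasing, and convex on a finite logarithmic window, the commutator $i[Q_0^{\mathrm{sym}}, Q_0^{\mathrm{skew}}]$ is a positive quadratic form dominating exactly the left-hand side of (\ref{carl4est}), modulo asymptotically flat error terms of size $\mathbf{c}$ that are swallowed by the leading pseudoconvex positivity (this is the standard $\mathbf{c} \ll 1$ absorption in \cites{mst20}). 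Asymptotic flatness of the remaining lower-order pieces of $P$ is treated identically.

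Next I would handle the damping contributions. The square $\norm{a D_t v}_{L^2_{t,x}}^2$ is nonnegative and discardable. The only genuinely new commutator is the cross term $i[Q_r, i a D_t] = -[Q_r, a D_t]$, which is a first-order operator \emph{supported in} $\supp a \subset \{|x| \leq R_0\}$. On this compact set the weight satisfies $\varphi, \varphi', \varphi_+''$ all bounded above and below by absolute constants, so the cross term is of schematic form $\mathcal{O}(\mathbbm{1}_{\supp a})(|\partial v| + |v|) \cdot |\partial v|$. By Cauchy--Schwarz, its contribution can be split between the principal positive-commutator terms on the left and the bootstrap remainder $\delta \norm{(1+\varphi')^{1/2} e^{\varphi} u}_{LE}$ on the right, which is precisely what the $\delta$-error terms in the statement are designed to receive. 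Passing from the $L^2_{t,x}$ identity to the stated weighted $LE / LE^*$ inequality is then a routine dyadic decomposition in $r$, using that $\varphi$ depends only on $\ln(1+r)$ so the weight is constant on each dyadic shell up to the factors of $\varphi', \varphi_+''$ already appearing.

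The main obstacle is the cross commutator $[Q_r, a D_t]$ introduced by the damping: although compactly supported, it is first order and has no \emph{a priori} sign, so its absorption into the Carleman positivity requires choosing the Carleman parameters in $\varphi$ large enough that the principal pseudoconvex gain $\varphi_+''$ dominates this lower-order damping error on $\supp a$, while simultaneously keeping $\varphi$ itself bounded so that the weighted $LE$ norms on both sides of (\ref{carl4est}) remain equivalent (up to constants depending on $\mathbf{c}$) to unweighted norms after undoing the conjugation. The bounded logarithmic weight of \cites{Bo18, KT01} is tailored precisely for this trade-off, and as $\delta \to 0$ one is forced to enlarge the Carleman window and parameters accordingly, which is consistent with the shape of the final estimate.
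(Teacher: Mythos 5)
Your sketch reconstructs the broad outline of the Carleman argument of \cites{mst20}, but the paper's actual proof of Theorem \ref{med freq est} is a citation, not a re-derivation: the damping $iaD_t$ is simply a first-order term $B^0 D_0$ with $B^0 = ia$ a bounded, smooth, compactly supported coefficient, hence an admissible asymptotically flat lower-order term in the sense required by Theorem~5.4 of \cites{mst20}. That theorem already permits general (and in particular \emph{unsigned}) first-order and zeroth-order perturbations, so the medium frequency Carleman estimate applies to $P$ verbatim, with no new commutator analysis needed; the sign of $a$ makes the situation strictly easier but is not used. You did not need to reopen the positive-commutator machinery at all.

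On the merits of your reconstruction, two points are off. First, the cross commutator $[Q_r, aD_t]$ is \emph{second} order, not first order as you claim: in the stationary case it equals $[\Box_g - \varphi_\alpha g^{\alpha\beta}\varphi_\beta,\, a]\,D_t$, and $[\Box_g, a]$ is first order, so the product with $D_t$ is second order. Your schematic bound $\mathcal{O}(\mathbbm{1}_{\supp a})(|\partial v| + |v|)\,|\partial v|$ is what one gets \emph{after} an integration by parts, but calling the operator itself first order conflates the operator's order with the resulting bilinear form. Second, the absorption step is where your sketch is genuinely incomplete: you correctly identify the tension (the Carleman pseudoconvexity must dominate the $O(1)$ damping error on $\supp a$, yet $\varphi$ must remain bounded so the exponential weight is harmless), flag it as "the main obstacle," and then assert it resolves via the choice in \cites{Bo18, KT01} — but you do not say how. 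The paper's discussion makes clear that the resolution in \cites{mst20} is a region-wise construction: a convex Carleman weight on the large compact set where the damping lives, and a three-zone exterior (convex, a transition where the pseudoconvexity conditions fail, and a constant-weight zone near infinity where Proposition~\ref{exterior ee 2} takes over), glued together by a cutoff argument. Your appeal to "a routine dyadic decomposition in $r$" elides this structure, which is where the actual content of the proof lives.
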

\begin{remark}\label{med freq rem}
We will justify \textit{why} this is an appropriate estimate on the medium frequencies. To that end, suppose that $u$ is supported at time frequencies $\tau$ such that $0<\tau_0\leq |\tau|\leq \tau_1$, where $\tau_0<\tau_1$. For compatibility with the other frequency regimes, we will want $\tau_0\ll 1\ll \tau_1$. Plancherel's theorem yields that  $$\delta\norm{(1+\varphi')^{1/2}e^\varphi u}_{LE}\lesssim \frac{\delta}{\tau_0}\norm{(1+\varphi')^{1/2}e^\varphi \partial_t u}_{LE},$$ while $$\delta\norm{\inprod{r}^{-1}(1+\varphi_+'')^{1/2}(1+\varphi')e^\varphi \partial_t u}_{LE}\lesssim \delta\tau_1 \norm{\inprod{r}^{-1}(1+\varphi_+'')^{1/2}(1+\varphi')e^\varphi u}_{LE}.$$ By choosing $\delta$ sufficiently small, both terms absorb into the left-hand side of (\ref{carl4est}) in a direct fashion. We can translate our work immediately into a local energy decay estimate for $u$, with an implicit constant which depends on $\varphi$. Notice that since $\delta$ can be chosen arbitrarily, (\ref{carl4est}) allows for any interval of frequencies bounded away from both zero and infinity.
\end{remark}

The proof of this theorem is broken up into two Carleman estimates, one which applies within a large compact set and one which applies outside of this compact set. Within the compact set, the damping term is well-signed and readily absorbable as a perturbation due to the conditions on the weight $\varphi.$ Here, the weight will be convex. Outside of the compact set, the damping is zero, so the proof in \cites{mst20} follows through without any modification. In this region, one desires to use Proposition \ref{exterior ee 2}, which requires a constant weight. To that end, one breaks up the exterior into three regions: one where the Carleman weight is convex, a transition region where the conditions break in order to bend the weight to be constant near infinity, and a region near infinity where the weight is constant. 

In both regions (the compact set and its exterior), the work in \cites{mst20} allows for more general (e.g. unsigned) lower-order terms than a damping term. The proofs of these estimates within the aforementioned regions are based on positive commutator arguments utilizing the self- and skew-adjoint parts of the conjugated operator $P_\varphi=e^\varphi Pe^{-\varphi}.$ The work is then combined using a cutoff argument.
 \subsection{Low Frequencies}\label{low freq}
We will set  $$P_0:=P\big|_{D_t=0}=D_ig^{ij}D_j.$$ This represents $P$ at time frequency zero, and one can utilize it to obtain information in a neighborhood of this frequency.  Since $\partial_t$ is uniformly time-like, $P_0$ is uniformly elliptic. The operator $P_0$ is a special case of that found in \cites{mst20}, so all of the results in their work apply here with almost no modification. Notice that the damping does not arise in $P_0.$

At low frequencies, \textit{the} obstruction to local energy decay arises when $P$ has a \textit{resonance} at frequency zero.
\begin{definition}
A function $u$ is called a \textit{zero resonant state} for $P$ if $u \in \lecal_0$ is non-zero and $P_0u=0$. If, in addition, $u\in L^2,$ then we call $u$ a \textit{zero eigenfunction}.
\end{definition}
Here, the space $\lecal$ is a variant of the $LE$ space where there is no time dependence (i.e. the time is fixed, and there is no time derivative arising in the norm), and $\lecal_0$ is the closure of $C_c^\infty$ in the $\lecal$ norm.

For a general wave operator $P$, such resonant states are annihilated by $P$ while having finite energy. However, they also possess an infinite $LE^1$ norm when integrating in $t$ over $[0,\infty)$, which violates local energy decay. Such states are ruled out in the context of this paper due to the uniform ellipticity of $P_0$. A quantitative condition on the existence of such resonant states is as follows.
\begin{definition}
$P$ is said to satisfy a \textit{zero resolvent bound/zero non-resonance condition} if there exists some $K_0$, independent of $t$, such that 
\begin{align}\label{zero res}
    \norm{u}_{\dot{H}^1}\leq K_0\norm{P_0 u}_{\dot{H}^{-1}}\qquad \forall u\in \dot{H}^1.
\end{align} 
\end{definition}
Proposition 2.10 of \cites{mst20} demonstrates that a stationary wave operator $P$ has no zero resonant states/zero eigenfunctions if and only if the zero non-resonance condition holds. In our problem, this condition is satisfied due to the uniform ellipticity of $P_0$. 

The relevant low frequency estimate is the following, and the corresponding theorem in \cites{mst20} is Theorem 6.1.
 \begin{Th}
Let $P$ be an asymptotically flat damped wave operator, and suppose that $\partial_t$ is uniformly time-like. Then,
\begin{equation}\label{low freq est}
\norm{u}_{LE^1}\lesssim\norm{\partial_t u}_{LE^1_c}+\norm{Pu}_{LE^*}
\end{equation} for all $u\in \mathcal{S}(\R^4)$.
\end{Th}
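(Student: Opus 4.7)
The plan is to decompose $u$ by time frequency, handle high and intermediate frequencies via the estimates already in place (Theorems \ref{high freq est} and \ref{med freq est}), and reduce the remaining low-frequency regime to a uniform resolvent bound which follows from the zero non-resonance condition for the uniformly elliptic operator $P_0$.

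Fix a small parameter $\tau_0>0$ to be chosen, together with a smooth Fourier multiplier $\chi_{lo}(D_t)$ whose symbol is supported in $\{|\tau|\leq 2\tau_0\}$ and equals $1$ on $\{|\tau|\leq\tau_0\}$. Write $u=u_{lo}+u_{hi}$ with $u_{lo}=\chi_{lo}(D_t)u$. On the frequency support of $u_{hi}$ one has $|\tau|\gtrsim\tau_0$, so Plancherel in $t$ gives $\norm{u_{hi}}_{L^2_tL^2_x}\lesssim\tau_0^{-1}\norm{\partial_t u_{hi}}_{L^2_tL^2_x}$. Further dyadic splitting of $u_{hi}$ in $\tau$ and application of Theorem \ref{med freq est} (in the sense of Remark \ref{med freq rem}) on each intermediate frequency band, together with Theorem \ref{high freq est} applied as in Remark \ref{high freq rem} on the very high band, yields
\[
\norm{u_{hi}}_{LE^1}\lesssim\norm{\partial_t u}_{LE^1_c}+\norm{Pu_{hi}}_{LE^*},
\]
and the right-hand forcing term is absorbed into $\norm{Pu}_{LE^*}$ modulo commutators $[P,\chi_{lo}(D_t)]u$, which are order-zero multipliers in $D_t$ acting on $u$ and thus controlled in $LE^*$ by $\norm{\partial_t u}_{LE^1_c}$ after using the compact support of $a$ and the smoothing character of $\chi_{lo}(D_t)$ applied to Schwartz functions.

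For $u_{lo}$, Proposition \ref{exterior ee 2} on $\{|x|>R\}$ for some $R\geq R_0$ reduces the estimate to the interior: since $u_{lo}\in\mathcal{S}(\R^4)$, sending the time window to $\R$ kills the data terms and gives $\norm{u_{lo}}_{LE^1_{>R}}\lesssim R^{-1}\norm{u_{lo}}_{LE_R}+\norm{Pu_{lo}}_{LE^*}$. It therefore suffices to bound $\norm{u_{lo}}_{LE^1_{<2R}}$. Take the partial Fourier transform in $t$: $v_\tau(x):=\hat u_{lo}(\tau,x)$ is supported in $\{|\tau|\leq 2\tau_0\}$ and satisfies $P(\tau)v_\tau=\widehat{Pu_{lo}}(\tau)$, where
\[
P(\tau)=P_0+\tau B_1+\tau^2 B_2,\qquad B_1=2g^{0j}D_j-(D_jg^{0j})+i\gamma a,\qquad B_2=-1.
\]
The uniform ellipticity of $P_0$ and the closing comment of Section \ref{low freq} give the non-resonance bound $\norm{w}_{\dot H^1}\leq K_0\norm{P_0w}_{\dot H^{-1}}$ for all $w\in\dot H^1$. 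Since $B_1:\dot H^1\to\dot H^{-1}$ and $B_2:\dot H^1\to\dot H^{-1}$ are bounded, standard Neumann perturbation shows that for $\tau_0$ small enough $P(\tau):\dot H^1\to\dot H^{-1}$ is invertible uniformly in $|\tau|\leq 2\tau_0$, with $\norm{v_\tau}_{\dot H^1}\lesssim\norm{P(\tau)v_\tau}_{\dot H^{-1}}$. Integrating the square of this in $\tau$, invoking Plancherel, and translating between Sobolev and $LE$-type norms on compact sets (which introduces only $R$-dependent constants that are fixed) then upgrades the pointwise-in-$\tau$ resolvent bound to
\[
\norm{u_{lo}}_{LE^1_{<2R}}\lesssim\norm{Pu_{lo}}_{LE^*}.
\]
Combining this with the exterior bound and the high-frequency bound completes the proof.

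The main obstacle is Step 3: verifying that the damping contribution $i\gamma\tau a$ inside $\tau B_1$ does not obstruct the Neumann perturbation of $P_0^{-1}$, and that the conversion between the $L^2_\tau\dot H^{\pm 1}$ estimates and the $LE^1,LE^*$ norms on compact spatial sets is lossless enough to leave an $R$-independent constant at the key step. Both points reduce to the uniform $\dot H^1\to\dot H^{-1}$ boundedness of $B_1$, $B_2$ and to the compactness of $\supp a$, but they must be tracked with care so that the absorption via $R^{-1}\norm{u_{lo}}_{LE_R}$ on the exterior side and the bootstrapping against $\norm{\partial_t u}_{LE^1_c}$ from Step 1 close the argument.
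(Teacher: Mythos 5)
Your proposal attempts a self-contained proof, whereas the paper simply cites Theorem~6.1 of \cites{mst20} and verifies that the damping poses no new obstruction. Unfortunately the proposed argument has two genuine gaps.

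First, you invoke Theorem~\ref{high freq est} to control the highest time-frequency band of $u_{hi}$. That theorem requires both stationarity of $P$ and the geometric control condition, neither of which is assumed in the statement you are proving — the low-frequency estimate is deliberately stated (and in \cites{mst20} is proved) for general asymptotically flat damped wave operators with $\partial_t$ uniformly time-like, and the paper explicitly remarks that the medium and low frequency analyses are \emph{independent} of stationarity and of trapping. You therefore cannot bootstrap the low-frequency theorem from the high-frequency one. In fact the cited proof performs no time-frequency decomposition at all: it writes $Pu=P_0u+(\text{terms carrying }\partial_t)$, proves a weighted elliptic estimate for $P_0$ (by perturbing from the flat Laplacian), and moves the $\partial_t$-carrying terms to the right-hand side, which is precisely the source of the $\norm{\partial_t u}_{LE^1_c}$ error term. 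Neither Theorem~\ref{high freq est} nor Theorem~\ref{med freq est} enters.

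Second, the Neumann perturbation step on the pair $\dot H^1\to\dot H^{-1}$ does not close. The operator $B_2=-1$ is \emph{not} bounded from $\dot H^1(\R^3)$ to $\dot H^{-1}(\R^3)$: on the Fourier side one would need $|\xi|^{-1}\hat w\in L^2$ whenever $|\xi|\hat w\in L^2$, which fails near $\xi=0$. Multiplying by the small scalar $\tau^2$ does not repair an unbounded operator. The cross term $2g^{0j}D_j$ in $B_1$ is also problematic: it sends $\dot H^1$ into $L^2$, but $L^2\not\hookrightarrow\dot H^{-1}$, and the dyadic $\ell^1$-summability of $\norm{g-m}_{AF}$ does not by itself supply the quantitative spatial decay needed for a global $\dot H^1\to\dot H^{-1}$ bound. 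This is exactly why the actual low-frequency analysis in \cites{mst20} is carried out in the weighted $\lecal^1/\lecal^*$ framework rather than the homogeneous Sobolev scale: the weights in $\lecal^*$ encode the decay that makes the relevant lower-order terms bounded and makes the perturbation off $\Delta$ close. Your subsequent transfer "between Sobolev and $LE$-type norms on compact sets" addresses the interior, but the failure is a global one at low frequencies, not a local one.
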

 \begin{remark}\label{low freq rem}
The error term $\norm{\partial_t u}_{LE^1_c}$ has the unfortunate effect of requiring information on the size of first-order derivatives of $\partial_t u.$ However, this estimate will only be used when the time frequency is close to zero, in which case this term will be absorbable into the left-hand side of the inequality. Indeed, if we consider $u\in\mathcal{S}(\R^4)$ with frequency support $0\leq |\tau|\leq \tau_0\ll 1$, then we may apply Plancherel's theorem to obtain that
$$\norm{\partial_t u}_{LE^1_c}\lesssim \tau_0\norm{u}_{LE^1_c}.$$ If $\tau_0$ is sufficiently small, then we may absorb this term into the lower-bound side of (\ref{low freq est}) to obtain local energy decay for such $u$.
\end{remark} 

The proof leverages weighted elliptic estimates for the flat Laplacian $\Delta$ in order to get similar estimates for $AF$ perturbations. Once again, the damping does not play a harmful (or even meaningful) role. At frequency zero, it provides no contribution, and near frequency zero, it can be readily absorbed by the error term in (\ref{low freq est}).

\subsection{Establishing Local Energy Decay}\label{ext arg}
Now, we discuss the second main theorem, local energy decay. 
First, the authors in \cites{mst20} show that it is sufficient to remove the Cauchy data at times $0$ and $T$. This makes it significantly easier to perform frequency localization. The corresponding theorem in \cites{mst20} is Theorem 7.1.
\begin{Th}\label{uncond LED}
Let $P$ be a stationary, asymptotically flat damped wave operator satisfying the geometric control condition (\ref{GCC}), and suppose that $\partial_t$ is uniformly time-like. Then, the estimate
\begin{align}
    \label{uncond LED est}
    \norm{u}_{LE^1}\lesssim\norm{Pu}_{LE^*}
\end{align} holds for all $u\in\mathcal{S}(\R^4)$. 
\end{Th}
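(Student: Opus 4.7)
The proof reduces to a Littlewood-Paley in time argument combining the three frequency-regime estimates already established: the high frequency estimate of Theorem \ref{high freq est} (in the form given at the end of Remark \ref{high freq rem}), the medium frequency Carleman estimate of Theorem \ref{med freq est}, and the low frequency estimate (\ref{low freq est}). The plan is to pick a partition of unity $1 = \chi_L(\tau) + \chi_M(\tau) + \chi_H(\tau)$ with $\chi_L$ supported in $|\tau| \leq 2\tau_0$, $\chi_M$ in $\tau_0 \leq |\tau| \leq 2\tau_1$, and $\chi_H$ in $|\tau| \geq \tau_1$, where $0 < \tau_0 \ll 1 \ll \tau_1$ are thresholds to be determined. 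Set $u_\star = \chi_\star(D_t) u$ for $\star \in \{L, M, H\}$, so $u = u_L + u_M + u_H$.

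The crucial structural fact is that, since $P$ is stationary, its coefficients depend only on $x$, so $P$ commutes with any Fourier multiplier in $t$: $P u_\star = \chi_\star(D_t)(Pu)$. Moreover, the cutoffs $\chi_\star$ are bounded multipliers on $L^2_t$, so they are bounded on $LE$, $LE^*$, and $LE^1$ (they commute with multiplication by $\inprod{x}^{\pm 1/2}$ and with the spatial cutoffs defining the dyadic sum/sup norms, and Plancherel handles the $L^2_t$ factor). Consequently $\norm{Pu_\star}_{LE^*} \lesssim \norm{Pu}_{LE^*}$ for each $\star$, so it suffices to estimate $\norm{u_\star}_{LE^1}$ by $\norm{Pu_\star}_{LE^*}$ for each frequency piece.

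Now apply each regime's estimate on its corresponding piece. For $u_H$, the computation in Remark \ref{high freq rem} shows that once $\tau_1$ is large enough, the $\norm{\inprod{x}^{-2}u}_{LE}$ error in (\ref{high freq LED}) is absorbed by $\tau_1^{-1}\norm{u_H}_{LE^1}$, yielding $\norm{u_H}_{LE^1} \lesssim \norm{Pu_H}_{LE^*}$. For $u_L$, Remark \ref{low freq rem} shows that once $\tau_0$ is small enough, $\norm{\partial_t u_L}_{LE^1_c}\lesssim \tau_0\norm{u_L}_{LE^1_c}$ is absorbed into the left side of (\ref{low freq est}), giving $\norm{u_L}_{LE^1} \lesssim \norm{Pu_L}_{LE^*}$. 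Finally, for $u_M$, we invoke Theorem \ref{med freq est} with $\delta$ chosen small relative to both $\tau_0$ and $\tau_1$: as in Remark \ref{med freq rem}, the two error terms on the right of (\ref{carl4est}) are absorbed by Plancherel (using $|\tau|\geq \tau_0$ and $|\tau|\leq 2\tau_1$ respectively), and since the weight $\varphi$ is bounded the weighted norms on the left are comparable to the plain $LE$-norms of $\nabla u_M,\inprod{r}^{-1}u_M,\partial_t u_M$ with constants depending on $\varphi$. This produces $\norm{u_M}_{LE^1} \lesssim_\varphi \norm{Pu_M}_{LE^*}$. Summing the three estimates and using the triangle inequality gives $\norm{u}_{LE^1} \lesssim \norm{Pu}_{LE^*}$.

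The main obstacle, as is typical for this kind of glueing, is the bookkeeping: one must fix the parameters in the correct order so that the absorptions are legitimate. First fix $\tau_1$ large (from the high frequency bootstrapping) and $\tau_0$ small (from the low frequency absorption); these choices depend only on the implicit constants in Theorems \ref{high freq est} and in (\ref{low freq est}). Only then, with $\tau_0$ and $\tau_1$ frozen, do we pick $\delta$ in Theorem \ref{med freq est} small enough that both medium frequency absorptions succeed; this determines the Carleman weight $\varphi$, and hence the implicit constant in the medium frequency bound (which can be arbitrarily large, but is fixed). The one remaining check is that time-Fourier multipliers really are bounded on $LE$ and $LE^*$, which reduces to Plancherel in $t$ after noting that these multipliers commute with the spatial weights and dyadic annular cutoffs appearing in the norm definitions.
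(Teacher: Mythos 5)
Your proposal is correct and follows essentially the same route as the paper: a time-frequency partition of unity, the stationarity of $P$ to commute the cutoffs through, absorption of the error terms in each regime via Plancherel (with $\tau_1$ large, $\tau_0$ small, and $\delta$ fixed last so the medium-frequency weight $\varphi$ accommodates both thresholds), and a final triangle inequality. The only addition you make beyond the paper's sketch is the explicit verification that $t$-Fourier multipliers are bounded on the $LE$-type spaces, which is a worthwhile remark but unsurprising since those norms factor through $L^2_t$ after the spatial weights and dyadic localizations are peeled off.
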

 In order to prove Theorem \ref{uncond LED}, one splits $u$ into its low, medium, and high frequency parts using a time-frequency partition of unity. In each relevant frequency regime, one applies the corresponding frequency estimate (as in Remarks \ref{high freq rem}, \ref{med freq rem}, and \ref{low freq rem}), and then sums them together. For the medium frequency estimate to be compatible with the low and high frequency regimes, one needs that it apply to any range of time frequencies bounded away from both zero (compatibility with low) and infinity (compatibility with high), which is the utility of the $\delta$ parameter in Theorem \ref{med freq est}. The commutators of the time-frequency cutoffs and $P$ are zero since $P$ is stationary.

 As in Section 7 of \cites{mst20}, one proves that Theorem \ref{uncond LED} implies Theorem \ref{LED thm} by fixing $u$ and constructing a function $v$ which matches the Cauchy data of $u$ at times $0$ and $T$ (and satisfies an appropriate bound) which allows one to apply (\ref{uncond LED est}) to $u-v.$ This construction is performed using a partition of unity on the support of $u[0]$, $u[T]$, and $Pu.$ In particular, one splits into an interior region $\{|x|<4R_0\}$ and an exterior region $\{|x|>2R_0\}$. The damping is non-problematic in the interior (here, one uses the uniform energy bounds) and is zero in the exterior. It is important to highlight the latter fact since the authors use a time reversal symmetry argument in the exterior region, and time reversal turns the damping into a driving force (and hence a harmful term). However, our damping is zero in the exterior region, rendering such an argument non-problematic by choosing an appropriate small $AF$ perturbation of $\Box$ which matches $P$ in the exterior (where the damping is zero). In the context of \cites{mst20}, this provides the ``two point" local energy estimate $$\norm{u}_{LE^1[0,T]}+\norm{\partial u}_{L^\infty_t L^2_x[0,T]}\lesssim\norm{\partial u(0)}_{L^2}+\norm{\partial u(T)}_{L^2}+\norm{Pu}_{LE^*+L^1_tL^2_x[0,T]}. $$ In view of Corollary \ref{uniform ee}, this implies local energy decay.
\bibliographystyle{amsplain}
\bibliography{main}

@article {Ali06,
    AUTHOR = {Alinhac, S.},
     TITLE = {On the {M}orawetz--{K}eel-{S}mith-{S}ogge inequality for the
              wave equation on a curved background},
   JOURNAL = {Publ. Res. Inst. Math. Sci.},
    VOLUME = {42},
      YEAR = {2006},
    NUMBER = {3},
     PAGES = {705--720},
}

@article{BLR92,
author = {Bardos, C. and Lebeau, G. and Rauch, J.},
 JOURNAL = {SIAM J. Control Optim.},
 pages = {1024--1065},
title = {Sharp sufficient conditions for the observation, control, and stabilization of waves from the boundary},
volume = {30},
number = {5},
year = {1992},
}

@article{BH10,
author = {Bony,  J. F. and H\"afner, D.},
title = {The Semilinear Wave Equation on Asymptotically {E}uclidean Manifolds},
journal = {Comm. Partial Differential Equations},
volume = {35},
number = {1},
pages = {23--67},
year  = {2010},
}

@phdthesis{Bo18,
    title    = {An Investigation of Non-Trapping, Asymptotically {E}uclidean Wave Equations},
    school   = {University of North Carolina at Chapel Hill},
    author   = {Booth, R.},
    year     = {2018},
    type     = {{PhD} dissertation},
}

@article{BR14,
author = {Bouclet, J. M.  and Royer, J.},
journal = {J. Funct. Anal.},
pages = {4538--4615},
title = {Local energy decay for the damped wave equation},
volume = {266},
year = {2014},
number = {7},
}

@article{BT07,
author = {Bouclet, J. M. and Tzvetkov, N.},
journal = {Amer. J. Math},
number = {6},
pages = {1565--1609},
title = {Strichartz estimates for long range perturbations},
volume = {129},
year = {2007},
}

@article{BT08,
author = {Bouclet, J. M. and Tzvetkov, N.},
journal = {J. Funct. Anal.},
number = {6},
pages = {1661--1682},
title = {On global {S}trichartz estimates for non trapping metrics.},
volume = {254},
year = {2008},
}

@article{Burq98,
  title={D{\'e}croissance de l'{\'e}nergie locale de l'{\'e}quation des ondes pour le probl{\`e}me ext{\'e}rieur et absence de r{\'e}sonance au voisinage du r{\'e}el},
  author={Burq, N.},
  journal={Acta Math.},
  volume={180},
  number={1},
  pages={1--29},
  year={1998},
}

@article{Chr08,
  title={Dispersive estimates for manifolds with one trapped orbit},
  author={Christianson, H.},
  journal={Comm. Partial Differential Equations},
  volume={33},
  number={7-9},
  pages={1147--1174},
  year={2008},
}

@article{Doi96,
author = {Doi, S.},
journal = {Comm. Partial Differential Equations},
pages = {163--178},
title = {Remarks on the {C}auchy problem for {S}chr\"{o}dinger-type equations},
number = {1-2},
volume = {21},
year = {1996},
}

@article{Hi22,
	Author = {Hintz, P.},
	Journal = {Comm. Math. Phys.},
	Number = {1},
	Pages = {491--542},
	Title = {A Sharp Version of {P}rice's Law for Wave Decay on Asymptotically Flat Spacetimes},
	Volume = {389},
	Year = {2022},

}

@article{Ika82,
author = {Ikawa, M.},
journal = {Osaka Math. J.},
number = {3},
pages = {459--509},
title = {Decay of solutions of the wave equation in the exterior of two convex obstacles},
volume = {19},
year = {1982},
}

@article{Ika88,
author = {Ikawa, M.},
journal = {Ann. Inst. Fourier (Grenoble)},
number = {2},
pages = {113--146},
title = {Decay of solutions of the wave equation in the exterior of several convex bodies},
volume = {38},
year = {1988},
}

@article{JSS90,
author = {Journ\'e, J. L. and Soffer, A. and Sogge, C. G.},
journal = {Bull. Amer. Math. Soc. (N.S.)},
pages = {519--524},
title = {{${L}^p\to {L}^{p'}$} estimates for time dependent {S}chr\"{o}dinger operators},
volume = {23},
year = {1990},
}

@article{JSS91,
author = {Journ\'e, J. L. and Soffer, A. and Sogge, C. G.},
journal = {Comm. Pure Appl. Math.},
pages = {573--604},
title = {Decay estimates for {S}chr\"{o}dinger operators},
volume = {44},
year = {1991},
}

@article{KSS02,
author = {Keel, M. and Smith, H. F. and Sogge, C. D.},
journal = {J. Anal. Math.},
note = {Dedicated to the memory of Thomas H. Wolff},
pages = {265--279},
title = {Almost global existence for some semilinear wave equations},
volume = {87},
year = {2002},
}

@article{KSS04,
 author = {Keel, M. and Smith, H. F. and Sogge, C. D.},
 journal = {J. Amer. Math. Soc.},
 number = {1},
 pages = {109--153},
 title = {Almost Global Existence for Quasilinear Wave Equations in Three Space Dimensions},
 volume = {17},
 year = {2004},
}

@article{KT01,
author = {H. Koch and D. Tataru},
title = {Carleman estimates and unique continuation for second-order elliptic equations with nonsmooth coefficients},
journal = {Comm. Pure Appl. Math.}, 
year = {2001},
pages = {339--360},
number = {3},
volume = {54}
}

@phdthesis{Kof22,
    title    = {Local Energy Decay for Damped Waves on Stationary, Asymptotically Flat Space-times},
    school   = {University of North Carolina at Chapel Hill},
    author   = {Kofroth, C.},
    year     = {2022},
    type     = {{PhD} dissertation},
}

@incollection{Leb96,
  author = {Lebeau, G.},
  year = {1996},
 BOOKTITLE = {Algebraic and geometric methods in mathematical physics
              ({K}aciveli, 1993)},
    SERIES = {Math. Phys. Stud.},
     TITLE = {\'{E}quation des ondes amorties},
     VOLUME = {19},
     PAGES = {73--109},
}

@article{Looi21,
author = {Looi, S. Z.},
title = {Pointwise decay for the wave equation on nonstationary spacetimes},
pages={126939},
  journal={J. Math. Anal. Appl.},
year = {2022}
}

@article{MMT08,
author = {Marzuola, J. and Metcalfe, J. and Tataru, D.},
journal = {J. Funct. Anal},
pages = {1497--1553},
title = {Strichartz estimates and local smoothing estimates for asymptotically flat {S}chr\"{o}dinger equations},
number = {6},
volume = {255},
year = {2008},
}

@article{MMTT10,
author = {Marzuola, J. and Metcalfe,  J. and Tataru,  D. and Tohaneanu, M.},
journal = {Comm. Math. Phys.},
pages = {37--83},
title = {Strichartz estimates on {S}chwarzschild black hole backgrounds},
volume = {293},
year = {2010},
NUMBER = {1},
}

@article{MS06,
author = {Metcalfe,  J. and Sogge, C. D.},
journal = {SIAM J. Math. Anal.},
pages = {188--209},
title = {Long time existence of quasilinear wave equations exterior to star-shaped obstacles via energy methods},
volume = {38},
    NUMBER = {1},
year = {2006},
}

@article{MS07,
author = {Metcalfe,  J. and Sogge, C. D.},
journal = {Math. Z.},
pages = {521--549},
title = {Global existence of null-form wave equations in exterior domains},
volume = {256},
    NUMBER = {3},
year = {2007},
}

@article{MST20,
author = {Metcalfe,  J. and  Sterbenz, J. and Tataru,  D.},
journal = {Amer. J. Math.},
number = {3},
pages = {821--883},
title = {Local energy decay for scalar fields on time dependent non-trapping backgrounds},
volume = {142},
year = {2020},
}

@incollection{MT09,
author = {Metcalfe,  J. and Tataru,  D.},
 BOOKTITLE = {Advances in phase space analysis of partial differential
              equations},
    SERIES = {Progr. Nonlinear Differential Equations Appl.},
    pages = {201--216},
title = {Decay estimates for variable coefficient wave equations in exterior domains},
volume = {78},
year = {2009},
}

@article{MT12,
author = {Metcalfe,  J. and Tataru,  D.},
journal = {Math. Ann.},
number = {4},
pages = {1183--1237},
title = {Global parametrices and dispersive estimates for variable coefficient wave equations},
volume = {353},
year = {2012},
}

@article{MTT12,
author = {Metcalfe,  J. and Tataru,  D. and Tohaneanu, M.},
journal = {Adv. Math.},
number = {3},
pages = {995--1028},
title = {Price's law on nonstationary space-times},
volume = {230},
year = {2012},
}

@article{Mora66,
author = {Morawetz,  C. S.},
journal = {Comm. Pure Appl. Math.},
pages = {439--444},
title = {Exponential decay of solutions of the wave equation},
volume = {19},
year = {1966},
}

@article{Mora68,
author = {Morawetz,  C. S.},
journal = {Proc. Roy. Soc. London Ser. A},
pages = {291--296},
title = {Time decay for the nonlinear {K}lein-{G}ordon equations},
volume = {306},
year = {1968},
}

@article{Mora75,
author = {Morawetz,  C. S.},
journal = {Comm. Pure Appl. Math.},
pages = {229--264},
title = {Decay for solutions of the exterior problem for the wave equation},
volume = {28},
year = {1975},
}

@article{MRS77,
author = {Morawetz,  C. S. and Ralston, J. V. and Strauss, W. A.},
journal = {Comm. Pure Appl. Math.},
number = {4},
pages = {447--508},
title = {Decay of solutions of the wave equation outside nontrapping obstacles},
volume = {30},
year = {1977},
}

@unpublished{Morg20,
author = {Morgan, K.},
note = {Preprint, arXiv:2006.11324},
title = {The effect of metric behavior at spatial infinity on pointwise wave decay in the asymptotically flat stationary setting},
year = {2020},
}

@unpublished{MW21,
author = {Morgan, K. and Wunsch, J.},
title = {Generalized {P}rice's law on fractional-order asymptotically flat stationary spacetimes},
year = {2021},
note = {Preprint, arXiv:2105.02305},
}

@article{NZ09,
author = {Nonnenmacher, S. and Zworski, M.},
title = {Quantum decay rates in chaotic scattering},
volume = {203},
journal = {Acta Math.},
number = {2},
pages = {149--233},
year = {2009},
}

@article{Ral69,
author = {Ralston, J. V.},
journal = {Comm. Pure Appl. Math.},
pages = {807--823},
title = {Solutions of the wave equation with localized energy},
volume = {22},
year = {1969},
}

@article{RT74,
author = {Rauch, J. and Taylor, M. E.},
journal = {Indiana Univ. Math. J.},
number = {1},
pages = {79--86},
title = {Exponential decay of solutions to hyperbolic equations in bounded domains},
volume = {24},
year = {1974},
}

@article{R2018,
  title={Local decay for the damped wave equation in the energy space},
  author={Royer, J.},
  journal={J. Inst. Math. Jussieu},
  volume={17},
  number={3},
  pages={509--540},
  year={2018},
}

@article{Sb15,
author = {Sbierski, J.},
  JOURNAL = {Anal. PDE},
number = {6},
pages = {1379--1420},
title = {Characterisation of the energy of {G}aussian beams on {L}orentzian manifolds: with applications to black hole spacetimes},
volume = {8},
year = {2015},
}

@article{SW10,
author = {Sogge, C. D. and Wang, C.},
journal = {J. Anal. Math.},
pages = {1--32},
title = {Concerning the wave equation on asymptotically {E}uclidean manifolds},
volume = {112}, 
year = {2010},
}

@article{SR05,
    author = {Sterbenz, J.},
    title = {Angular regularity and {S}trichartz estimates for the wave equation},
    journal = {Int. Math. Res. Not.},
    number = {4},
    pages = {187--231},
    year = {2005},
    note = {With an appendix by Igor Rodnianski}
}

@article{Tat08,
author = {Tataru,  D.},
journal = {Amer. J. Math.},
number = {3},
pages = {571--634},
title = {Parametrices and Dispersive Estimates for {S}chr\"{o}dinger Operators with Variable Coefficients},
volume = {130},
year = {2008},
}

@article{Tat13,
author = {Tataru,  D.},
journal = {Amer. J. Math.},
number = {2},
pages = {361--401},
title = {Local decay of waves on asymptotically flat stationary space-times},
volume = {135},
year = {2013},
}

@article{Toh12,
author = {Tohaneanu, M.},
journal = {Trans. Amer. Math. Soc.},
number = {2},
pages = {689--702},
title = {Strichartz estimates on {K}err black hole backgrounds},
volume = {364},
year = {2012},
}

@article{WZ11,
	Author = {Wunsch, J. and Zworski, M.},
	Journal = {Ann. Henri Poincar\'{e}},
	Number = {7},
	Pages = {1349--1385},
	Title = {Resolvent Estimates for Normally Hyperbolic Trapped Sets},
	Volume = {12},
	Year = {2011},
}
\end{document}